\pgfplotsset{compat=1.17}
\newtheorem{theorem}{Theorem}[section]
\newtheorem{lemma}[theorem]{Lemma}
\newtheorem{remark}[theorem]{Remark}
\newtheorem{hypothesis}[theorem]{Hypothesis} 
\numberwithin{equation}{section}
\providecommand{\keywords}[1]
{
  \small	
  \textbf{\textit{Keywords: }} #1
}
\title{Acceleration or finite speed propagation in weakly monostable reaction-diffusion equations}
\author{Emeric Bouin \footnote{CEREMADE - Universit\'e Paris-Dauphine, PSL Research University, UMR CNRS 7534, Place du Mar\'echal de Lattre de Tassigny, 75775 Paris Cedex 16, France. E-mail: \texttt{bouin@ceremade.dauphine.fr}}
	\and J\'er\^ome Coville \footnote{UR 546 Biostatistique et Processus Spatiaux, INRA, Domaine St Paul Site Agroparc, F-84000 Avignon, France. E-mail: \texttt{jerome.coville@inrae.fr}}
	\and Xi Zhang \footnote{School of Mathematics and Statistics, Central South University, Changsha, Hunan 410083, P. R. China. E-mail:  \texttt{xizhangmath@gmail.com}}}
\begin{document}
	\maketitle
	\begin{abstract}
This paper focuses on propagation phenomena in reaction-diffusion
equations with a weakly monostable nonlinearity. The reaction term can be seen as an intermediate between the classical logistic one (or Fisher-KPP)  and the standard weak Allee effect one.
We investigate the effect of the decay rate of the initial data on the propagation rate.  When the right tail of the initial data is sub-exponential, finite speed propagation and acceleration may happen and we derive the exact separation between the two situations.  When the initial data is sub-exponentially unbounded, acceleration unconditionally occurs. Estimates for the locations of the level sets are expressed in terms of the decay of the initial data. In addition, sharp exponents of acceleration for initial data with sub-exponential and algebraic tails are given. Numerical simulations are presented to illustrate the above findings.

 \end{abstract}
 \keywords{reaction-diffusion equations, propagation phenomena, weakly monostable equations}

	\section{Introduction}\label{s1}
	In this paper, we study rates of invasion in the following one-dimensional reaction-diffusion equations
	\begin{equation}\label{oeq1}
 \left\{
 \begin{aligned}
	 u_t(t,x)&=u_{xx}(t,x)+f(u(t,x)),&t>0, x\in\mathbb{R},\\
  u(0,x)&= u_0(x)\ge 0, & x\in\mathbb{R}.
  \end{aligned}
  \right.
	\end{equation}

		\begin{hypothesis}\label{fu}
		The non-linearity $f\in C^1([0,1], \mathbb{R})$ is of the weakly monostable type, in the sense that
		\begin{equation*}
			f(0)=f(1)=0,\quad  f(s)>0  \text{  for any }s\in(0,1),\quad f'(1)<0,
		\end{equation*}
 and there exists $s_0\in (0,1)$, $K\ge0$, $\alpha>0$ and $r>0$ such that 
		\begin{equation}\label{fu1}
			f(s)\le r\frac{s}{(1+|\ln s|)^\alpha} \quad \text{for  all } s\in(0,1),
		\end{equation}
  and 
		\begin{equation}\label{fu2}
		f(s)\ge r\frac{s}{(1+|\ln s|)^\alpha}(1-Ks) \quad \text{for  all } s\in(0,s_0].
		\end{equation}
	\end{hypothesis}
After Kolmogorov, Petrovskii and Piskunov \cite{kolmogorov1937etude}, and Fisher \cite{fisher1937wave},  the classical monostable equation is equation \eqref{oeq1} with Fisher-KPP type nonlinearity, that is,
\begin{equation}\label{kppass}
    f(0)=f(1)=0 \text{ and } 0<f(s)\le f'(0)s \text{ for all }s\in(0,1).
\end{equation}
In population dynamics, this type of non-linearity is commonly used to model the situation where growth per capita is maximal at low densities.  The decay rate of the initial data at infinity is crucially important for the propagation problem. For the Fisher-KPP equation with front-like initial data,  initial data $u_0$ with exponentially bounded decay, that is, 
\begin{equation}\label{ebin}
    \lim_{x\to +\infty} u_0(x)e^{\varepsilon x}<\infty \quad \text{for some }\varepsilon,
\end{equation}
lead to finite propagation speed \cite{mallordy1995parabolic,bramson1983convergence}.
On the other hand, for an exponentially unbounded initial data, meaning that condition \eqref{ebin} is not met, or
\begin{equation}\label{eubin}
 \lim_{x\to +\infty} u_0(x)e^{\varepsilon x}> +\infty \quad \text{for any }\varepsilon,
\end{equation}
Hamel and Roques \cite{2009Fast} have presented evidence of acceleration of the solution to the Fisher-KPP equation. They also provided an expression of the locations of level sets based on the decay of the initial data. We refer to references \cite{garnier2011accelerating,cabre2013influence,alfaro2017propagation,henderson2016propagation,lau1985nonlinear,du2023exact,hamel2010uniqueness,berestycki2008asymptotic,berestycki2005speed,berestycki2010speed} for  the further results about propagation in KPP equations.

\par
When an Allee effect occurs, meaning that the per capita growth is no longer maximal at low densities, the KPP assumption \eqref{kppass} becomes unrealistic. Hence, incorporating the Allee effect into models becomes necessary. An acceleration phenomenon may take place in the degenerate situation $f'(0)=0$. Indeed, when the initial data is front-like and the nonlinearity $f(s)\sim r s^{\alpha+1}$ with $\alpha>0$ as $s\to 0^+$, Alfaro \cite{alfaro2017slowing} has studied the balance between the decay rate of the initial data at infinity and the weak Allee effect and found that for exponentially unbounded tails but lighter than algebraic acceleration does not occur in the presence of the Allee effect, which is in contrast with the KPP equation. Similarly to the KPP situation, the initial data with exponentially bounded decay lead to a finite propagation speed \cite{kay2001comparison,roquejoffre1997eventual}. On the other hand, algebraic decay leads to acceleration despite the Allee effect and the position of the level sets of $u(t,\cdot)$ as $t\to \infty$ propagates polynomially fast \cite{alfaro2017slowing,leach2002evolution}. We refer to  references \cite{bates1997traveling,chen1997existence,2013Traveling,gui2015travelingalleecahn} for other kinds of Allee effect. 
\par
It is worth mentioning that these results about propagation phenomena in degenerate monostable equations are based on the assumption $f(s)\sim r s^{\alpha+1}$ with some $\alpha>0$ and $r>0$ as $s\to 0^+$.  This assumption is used to quantify the degeneracy. In this paper, we also take into account that the growth per capita is small at small densities, but we quantify the degeneracy by a weakly monostable type nonlinearity $f$ satisfying $f(s)\sim r\frac{s}{|\ln s|^\alpha}$ with $\alpha>0$ and $r>0$ as $s\to 0^+$, like $f(s)=r \frac{s}{(1+|\ln s|)^\alpha}(1-s)$ for $s\in(0,1)$. Notice that such nonlinearity is between the KPP type and the Allee effect type near the right side of zero point, see Figure \ref{figcom}. Thus, this type of nonlinear term fill an existing gap  between two classical nonlinearities.

\begin{figure}
    \centering
\begin{tikzpicture} 
\begin{axis}[
xlabel = $s$,
ylabel = $\frac{f(s)}{s}$,
xmin=-0.01,
xmax=0.1,
ymin=0,
ymax=1.2,
xtick = {0.00},
xticklabels = {0},
ytick =\empty,
]
 \addplot[
 color=blue,
 samples = 100, 
 domain= 1e-10:0.1,
 line width =1.5,
 dotted]{x/x};
\addlegendentry{$r$};
    \addplot[
    color = red,
    line width = 3.0,
    samples = 600,
    domain = 1e-100:0.1,
    ]{1/(-log10(x))};
    \addlegendentry{$\tfrac{r}{|\ln s|^\alpha}$};  
    \addplot[color = black,samples = 100, domain= 0:0.1,line width =1.5, dashed]{x^0.5};
        \addlegendentry{$r s^\alpha$};
    \end{axis}
\end{tikzpicture}
\begin{tikzpicture} 
\begin{axis}[
xlabel = $s$,
ylabel = $f(s)$,
xmin=-0.1,
ymin=0,
ymax=0.4,
xtick = {0,1},
xticklabels = {0,1},
ytick =\empty,
]
 \addplot[
 color=blue,
 samples = 100, 
 domain= 1e-10:1,
 line width =1.5,
 dotted]{x*(1-x)};
\addlegendentry{$rs(1-s)$};
    \addplot[
    color = red,
    line width = 3.0,
    samples = 600,
    domain = 1e-100:1,
    ]{x*(1-x)/(1-log10(x))};
    \addlegendentry{$r\frac{s(1-s)}{(1+|\ln s|)^\alpha}$};  
    \addplot[color = black,samples = 100, domain= 0:1, line width =1.5, dashed]{x^2*(1-x)};
        \addlegendentry{$r s^{\alpha+1}(1-s)$};
    \end{axis}
\end{tikzpicture}
  \caption{Comparison of the size of three kinds of nonlinearities near zero, where the parameter $r$ and $\alpha$ are positive.}
 \label{figcom}
\end{figure}
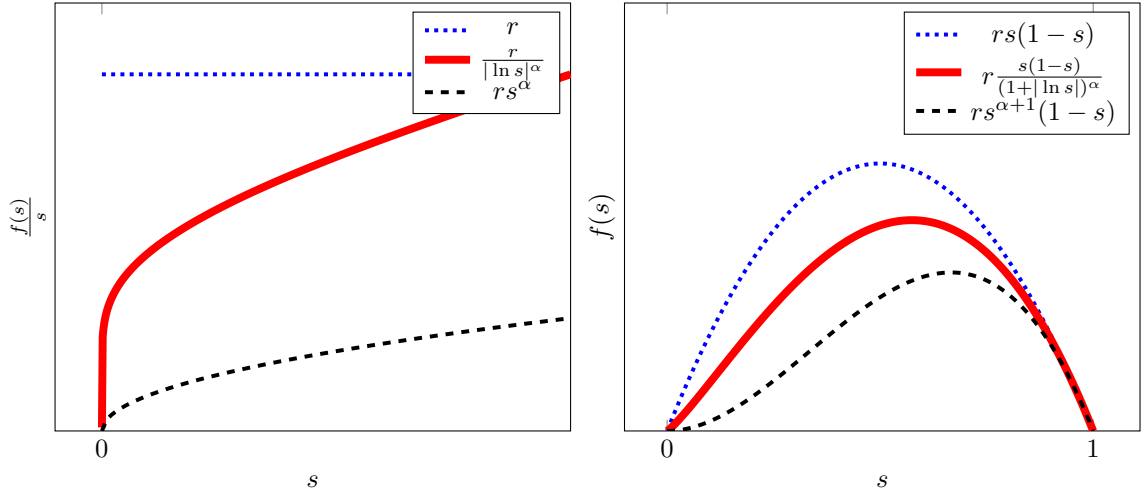
\par	To describe the propagation speed, we introduce three notations. For any $\lambda\in(0,1)$, the (upper) level set of $u(t,x)$ is defined by
\begin{equation*}
	E_\lambda(t):=\{x\in\mathbb{R}:u(t,x)\ge \lambda\}.
\end{equation*}
Let $x_\lambda(t)$ be the largest element of level set of $u(t,x)$  defined by
\begin{equation*}
	x_\lambda(t):=\sup E_\lambda(t).
\end{equation*}
For any subset $\Lambda\subset (0,1]$, we set
\begin{equation*}
    u_0^{-1}\{\Lambda\}:=\{x\in\mathbb{R}:u_0(x)\in \Lambda\}
\end{equation*}
the inverse image of $\Lambda$ by $u_0$.

 \begin{hypothesis}\label{ic}
The initial data $u_0:\mathbb{R}\to [0,1]$ is uniformly continuous and asymptotically front-like, in the sense that
		\begin{equation*}
			u_0>0 \ \text{in} \ \mathbb{R}, \quad \liminf_{x\to-\infty}u_0>0,\quad \lim_{x\to +\infty}u_0=0.
		\end{equation*}
  \end{hypothesis}
In this paper, we always denote by $u(t,x)$ the solution to \eqref{oeq1} with initial data $u_0$. We mainly consider the following types of initial data:
\begin{itemize}
    \item Sub-exponentially bounded for large $x$, that is, there exist $x_0>0$ such that, for any $x>x_0$,
 \begin{equation*}
     u_0(x)\lesssim e^{-\mu x^\beta},
 \end{equation*}
 with $\beta\in(0,1)$ and $\mu>0$.
 \item Sub-exponential decay for large $x$, that is, there exist $x_0>0$ such that, for any $x>x_0$, 
 \begin{equation*}
     u_0(x)\asymp e^{-\mu x^\beta},
 \end{equation*}
 with $\beta\in(0,1)$ and $\mu>0$ \footnote{The notation $a\asymp b$ means that there exists a constant $C$ such that $C b\le a \le C^{-1} b$.}.
 \item Algebraic decay for large $x$, that is, there exists $x_0>1$ such that, for any $x>x_0$,
 \begin{equation*}
     u_0(x)\asymp \frac{1}{x^\beta},
 \end{equation*}
 with $\beta>0$.
 \item Initial data $u_0$ that decay as a negative power of $\ln x$ for large $x$, that is, there exists $x_0>e$ such that, for any $x>x_0$,
 \begin{equation*}
     u_0(x)\asymp (\ln x)^{-\beta},
 \end{equation*}
 with $\beta>0$.
 \end{itemize}

\par  Our first result shows that for sub-exponentially bounded initial data, acceleration does not happen.
	\begin{theorem}\label{lin}
			Let $\alpha>0$ and $\beta>0$ be such that 
   \begin{equation*}
       \beta\ge\frac{1}{\alpha+1}.
   \end{equation*}
   Assume that the non-linearity $f$ and the initial data $u_0$ satisfy Hypotheses \ref{fu} and \ref{ic}, respectively. Assume that there exist $x_0>0$ and $\mu>0$ such that
		\begin{equation}\label{x0}
			 u_0(x)\lesssim  e^{-\mu x^\beta} \quad \text{for any } x\ge x_0.
		\end{equation}      
Then, for any $\lambda\in(0,1)$, there exist some positive constants $c$ and a time $T_\lambda$ such that 
		\begin{equation}\label{linineq}
			\Gamma <\frac{x_\lambda(t)}{t}<c  \quad \text{for any }t>T_\lambda.
		\end{equation}
\end{theorem}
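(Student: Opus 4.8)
\emph{Strategy.} The plan is to trap the level set $x_\lambda(t)$ between two linear-in-$t$ functions by the parabolic comparison principle, building an explicit supersolution for the upper bound and invoking monostable spreading for the lower one. The upper inequality $x_\lambda(t)<ct$ is the heart of the matter, since it is what rules out acceleration; the lower inequality $x_\lambda(t)>\Gamma t$ only expresses that the stable state $u=1$ invades at a genuinely positive speed. For the upper bound I would look for a traveling supersolution of the form $\bar u(t,x)=\min\{1,\,Ke^{-\mu(x-ct)^\beta}\}$ with $K>1$ and $c>0$ to be fixed. Writing $w=x-ct$ and $\bar u=e^{-g(w)}$ on the region $\{w\ge w_*\}$ where $\bar u<1$ (so $g(w)=\mu w^\beta-\ln K\ge 0$, with $w_*=(\ln K/\mu)^{1/\beta}$), dividing the supersolution inequality $\bar u_t-\bar u_{xx}-f(\bar u)\ge 0$ by $\bar u>0$ and using \eqref{fu1} in the form $f(e^{-g})/e^{-g}\le r/(1+g)^\alpha$ reduces everything to the scalar inequality
\[
c\,g'(w)+g''(w)-\big(g'(w)\big)^2\;\ge\;\frac{r}{(1+g(w))^\alpha},\qquad w\ge w_*.
\]
With $g'=\mu\beta w^{\beta-1}$, $g''=\mu\beta(\beta-1)w^{\beta-2}$ and $(g')^2=\mu^2\beta^2w^{2\beta-2}$, for $\beta<1$ the left side is dominated for large $w$ by $c\mu\beta w^{\beta-1}$, while the right side behaves like $r\mu^{-\alpha}w^{-\alpha\beta}$. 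Comparing the exponents $\beta-1$ and $-\alpha\beta$ shows the left side can absorb the right precisely when $\beta-1\ge-\alpha\beta$, i.e. $\beta(\alpha+1)\ge 1$ — this is exactly the hypothesis $\beta\ge\tfrac{1}{\alpha+1}$, and it is where the threshold enters. One then fixes $c$ large enough to close the inequality for all large $w$, and enlarges $c$ once more to dominate the bounded right-hand side on the remaining compact range $[w_*,W_0]$, where $g'$ is bounded below by a positive constant.

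Two points then need care. First, the singularity of the profile: $g'(w)\to\infty$ as $w\to 0^+$ makes the pure profile violate the inequality near the front, which is why I cap it at $1$ with $K>1$; the cap pushes the matching point to $w_*>0$, so the differential inequality is only required on $\{w\ge w_*\}$ where $w$ stays away from $0$. On $\{w<w_*\}$ one has $\bar u\equiv 1$, a stationary solution since $f(1)=0$. At the corner $w=w_*$ the function passes from $1$ to the decreasing profile with a downward jump in slope, a concave corner, so $\bar u$ is a supersolution in the generalized sense (a minimum of two supersolutions) and comparison applies. Second, one must verify $\bar u(0,\cdot)\ge u_0$: choosing $K$ above the implicit constant in \eqref{x0} handles $x\ge x_0$, while for $x<x_0$ the bound $u_0\le 1=\bar u$ on the capped region gives domination. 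Comparison then yields $u\le\bar u$, and solving $Ke^{-\mu(x-ct)^\beta}=\lambda$ gives $x_\lambda(t)\le ct+\big(\tfrac1\mu\ln\tfrac K\lambda\big)^{1/\beta}$, hence $x_\lambda(t)/t<c'$ for $t$ large.

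\emph{Lower bound.} Here I would use that $f$ is monostable with $f>0$ on $(0,1)$ and $f'(1)<0$. Testing a front equation $\phi''+c\phi'+f(\phi)=0$ (with $\phi(-\infty)=1$, $\phi(+\infty)=0$) against $\phi'$ and integrating gives $c=\big(\int_0^1 f\big)/\int(\phi')^2>0$, so the invasion speed is genuinely positive even though $f'(0)=0$. Concretely I would dominate $f$ from below by an ignition-type $\underline f\le f$ that vanishes near $0$, is positive on $(\theta,1)$ and has a traveling front of speed $\Gamma>0$, then use parabolic estimates together with $\liminf_{x\to-\infty}u_0>0$ and the local convergence $u\to1$ to guarantee that after a finite time $u$ exceeds the ignition threshold on an arbitrarily long interval; comparing with the translating ignition front then yields $x_\lambda(t)\ge\Gamma t$ for $t>T_\lambda$.

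\emph{Main obstacle.} The crux is the upper-bound supersolution: forcing the profile $e^{-\mu w^\beta}$ to satisfy the differential inequality uniformly \emph{down to the front}, which necessitates the capping-at-$1$ device and the generalized concave-corner comparison, and correctly identifying that the leading-edge balance between the transport/diffusion contribution $w^{\beta-1}$ and the weak reaction $w^{-\alpha\beta}$ is governed exactly by $\beta\ge\tfrac{1}{\alpha+1}$.
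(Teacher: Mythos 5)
Your overall strategy coincides with the paper's: a translating capped sub-exponential supersolution for the upper bound (the concave-corner/minimum-of-supersolutions device and the verification $\bar u(0,\cdot)\ge u_0$ are exactly as in Section 2), and a comparison from below with a positive-speed spreading mechanism for the lower bound (the paper uses $r_1 v^2(1-v)\le f(v)$ and cites a known spreading result for that degenerate monostable equation, rather than your ignition-type front, but this difference is immaterial).

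There is, however, one genuine gap in your upper bound: you build the profile with the exponent $\beta$ of the initial data, i.e. $g(w)=\mu w^\beta-\ln K$, and your verification of
\[
c\,g'(w)+g''(w)-\big(g'(w)\big)^2\;\ge\;\frac{r}{(1+g(w))^\alpha}
\]
is explicitly carried out ``for $\beta<1$'', where the transport term $c\mu\beta w^{\beta-1}$ dominates. But the theorem allows any $\beta\ge\tfrac{1}{\alpha+1}$, in particular $\beta\ge 1$ (e.g.\ Gaussian tails). In that regime $(g')^2=\mu^2\beta^2 w^{2\beta-2}$ grows at least as fast as $g'=\mu\beta w^{\beta-1}$, so the left-hand side tends to $-\infty$ as $w\to\infty$ (for $\beta>1$) no matter how large $c$ is, and your profile is \emph{not} a supersolution at the leading edge. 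The fix is precisely what the paper does: always take the profile exponent $p=\tfrac{1}{\alpha+1}<1$ (so the differential inequality closes for large $w$ under the borderline balance $p-1=-\alpha p$), and use the hypothesis $\beta\ge p$ only to check the initial ordering, since $e^{-\mu x^\beta}\le e^{-\mu x^{p}}$ for $x\ge 1$. With that substitution your argument matches the paper's; as written, it only proves the theorem for $\tfrac{1}{\alpha+1}\le\beta<1$.
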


 Now, we turn to cases where it is assumed that the initial data $u_0$ decay more slowly than $e^{-\varepsilon x^{\frac{1}{\alpha+1}}}$ as $x\to +\infty$ for any $\varepsilon>0$, that is,
 \begin{equation}\label{sed}
    \forall \varepsilon>0,\quad \exists x_\varepsilon\in \mathbb{R},\quad u_0(x)\ge e^{-\varepsilon x^{\frac{1}{\alpha+1}}} \text{in }[x_\varepsilon,+\infty). 
 \end{equation}
Let us denote 
\begin{equation}\label{lvar}
    \varphi_0(x) := -\ln u_0(x)\ge 0.
\end{equation}
Notice that if $u_0$ is $C^2$, then we can get
\begin{equation*}
    \varphi_0'(x)=-\frac{u_0'}{u_0}(x) \quad\text{and}\quad \varphi_0''(x)=-\Big(\frac{u_0'}{u_0}\Big)'(x).
\end{equation*}
Observe that if we assume that $\varphi_0'(x)=o(\varphi_0^{-\alpha}(x))$ as $x\to +\infty$, then condition \eqref{sed} is fulfilled. 
\par For such initial data, we have the following result. 
	\begin{lemma}\label{gen}
	 Assume that the non-linearity $f$ and the initial data $u_0$ satisfy Hypotheses \ref{fu} and \ref{ic}, respectively. Assume that $u_0$ is of class $C^2$ and  non-increasing on $[\xi_0,+\infty)$ for some $\xi_0>0$, and
		\begin{equation}\label{genoo}
			\varphi_0'(x)=o(\varphi_0^{-\alpha}(x)) \text{  and  }\varphi_0''(x)=o(\varphi_0'(x))  \quad \text{as }x\to +\infty.
		\end{equation}
  Then, for any fixed $\lambda\in(0,1)$ and small $\varepsilon>0$, there is a time $T_{\lambda,\varepsilon}$ such that
		\begin{equation*}
			E_\lambda(t)\subset u^{-1}_0\left\{\left[e^{-[(r+\varepsilon)(\alpha+1)t]^{\frac{1}{\alpha+1}}},e^{-[(r-\varepsilon)(\alpha+1)t]^{\frac{1}{\alpha+1}}}\right]\right\} \quad \text{for any }t>T_{\lambda,\varepsilon}.
		\end{equation*}
	\end{lemma}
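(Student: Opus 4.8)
The guiding idea is that, in the slow-decay regime \eqref{genoo}, diffusion is a lower-order effect and the solution at each fixed $x$ is driven by the ODE $\dot v=f(v)$ started from $v(0)=u_0(x)$. To make this precise I would pass to the logarithmic variable $u=e^{-\varphi}$, so that \eqref{oeq1} becomes $\varphi_t=\varphi_{xx}-\varphi_x^2-g(\varphi)$, where $g(\varphi):=f(e^{-\varphi})e^{\varphi}$ satisfies, by Hypothesis \ref{fu}, the two-sided estimate $r(1+\varphi)^{-\alpha}(1-Ke^{-\varphi})\le g(\varphi)\le r(1+\varphi)^{-\alpha}$. The role of \eqref{genoo} is precisely to force the spatial terms $\varphi_x^2$ and $\varphi_{xx}$ to be negligible against the reaction strength $g(\varphi)\asymp r\varphi^{-\alpha}$, so that $\varphi$ essentially solves $\varphi_t\approx-r\varphi^{-\alpha}$; integrating gives $\varphi^{\alpha+1}(t)\approx\varphi_0^{\alpha+1}-r(\alpha+1)t$ and pinpoints the level-$\lambda$ set near $\varphi_0\approx[r(\alpha+1)t]^{1/(\alpha+1)}$, which is the announced exponent.

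To turn this into rigorous bounds I would construct explicit sub- and super-solutions of the form $e^{-\psi}$ with $\psi$ given by the pure ODE flow. For the lower bound on the invasion I set $\underline\psi^{\alpha+1}(t,x)=\varphi_0(x)^{\alpha+1}-(r-\varepsilon)(\alpha+1)t$, that is $\underline\psi_t=-(r-\varepsilon)\underline\psi^{-\alpha}$ with $\underline\psi(0,\cdot)=\varphi_0$; then $\underline u=e^{-\underline\psi}$ is a subsolution provided $g(\underline\psi)-(r-\varepsilon)\underline\psi^{-\alpha}\ge\underline\psi_{xx}-\underline\psi_x^2$, whose left-hand side is $\gtrsim\varepsilon\,\underline\psi^{-\alpha}$ in the small-$u$ range by the lower bound \eqref{fu2}. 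Symmetrically, the rate $r+\varepsilon$ together with the upper bound \eqref{fu1} yields a supersolution $\overline u$. Since $f$ is bounded below by a positive constant on any $[s_0,\lambda]$, the extra ascent from level $s_0$ up to $\lambda$ costs only a bounded time, so it contributes an $O(1)$ shift that, like the constant $(-\ln\lambda)^{\alpha+1}$, is absorbed into the $\varepsilon$-margin once $t$ is large; this is the source of the threshold $T_{\lambda,\varepsilon}$. The comparison principle then gives $\underline u\le u\le\overline u$, and reading off where $\underline u$ and $\overline u$ cross $\lambda$ yields $e^{-[(r+\varepsilon)(\alpha+1)t]^{1/(\alpha+1)}}\le u_0(x_\lambda(t))\le e^{-[(r-\varepsilon)(\alpha+1)t]^{1/(\alpha+1)}}$, which is precisely the claimed localization of $E_\lambda(t)$.

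Two technical points need care. First, the profile $\underline\psi$ is only defined ahead of the front, since $\underline\psi^{\alpha+1}$ turns negative behind it; there I would truncate the sub/super-solutions at a fixed level in $(\lambda,1)$, using $f>0$ and $f'(1)<0$ to keep the comparison valid, and glue the pieces so that the differential inequalities survive across the junction (and so that the truncated data still frame $u_0$). Second, and this is the main obstacle, one must verify that the diffusion error $\underline\psi_x^2-\underline\psi_{xx}$ really is $o(\underline\psi^{-\alpha})$ uniformly on the relevant space-time region, not merely at $t=0$. Differentiating the explicit formula gives $\underline\psi_x=(\varphi_0/\underline\psi)^{\alpha}\varphi_0'$, and at the front $\underline\psi=O(1)$ while $\varphi_0\approx[(r-\varepsilon)(\alpha+1)t]^{1/(\alpha+1)}\to\infty$, so the prefactor $(\varphi_0/\underline\psi)^{\alpha}$ blows up; what rescues the estimate is exactly that \eqref{genoo} is calibrated to $\varphi_0^{-\alpha}$, giving $\underline\psi_x\asymp\varphi_0^{\alpha}\varphi_0'=\varphi_0^{\alpha}\,o(\varphi_0^{-\alpha})=o(1)$, with the analogous bound for $\underline\psi_{xx}$ using in addition $\varphi_0''=o(\varphi_0')$. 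Establishing these estimates uniformly, and checking that they are dominated by the $\varepsilon\,\underline\psi^{-\alpha}$ margin once the front is far enough out, is where the bulk of the work lies.
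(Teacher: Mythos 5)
Your strategy is the paper's own, just written in the logarithmic variable: your $\underline\psi,\overline\psi$ with $\psi^{\alpha+1}=\varphi_0^{\alpha+1}-\rho(\alpha+1)t$ and $\rho=r\mp\varepsilon$ are exactly the profile $w$ of \eqref{wxt} (up to the harmless normalization $1+\varphi_0$, which the paper uses precisely so that $1-\ln w\ge 1$ on the whole comparison region and the prefactors $(\varphi_0/\psi)^\alpha$ never degenerate at the front). Your identification of the key estimate --- that \eqref{genoo} makes $\varphi_0'(1+\varphi_0)^\alpha$ and $\varphi_0''(1+\varphi_0)^\alpha$ small, hence the diffusion error $o(\psi^{-\alpha})$ uniformly for $x\ge x_0(t)$ and $t$ large --- is exactly Lemma \ref{de}, and the $\varepsilon$-margin absorbs it exactly as in Lemma \ref{sus}. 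The ascent from a small level up to $\lambda$ in $O(1)$ time is also the paper's step, though note it is not a pure ODE consequence of $f>0$ on $[s_0,\lambda]$: one must compare with a solution issued from a front-like datum at level $\Theta=\inf_{\partial\Omega}u$ and invoke a linear-in-time spreading result to lift $u$ above $\lambda$ on the whole region behind the front; the $O(1)$ shift conclusion is nevertheless correct.

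The one genuine gap is the gluing of the \emph{sub}solution. Truncating the decreasing profile $e^{-\underline\psi}$ by a constant behind the front produces a gradient jump of the wrong sign: $v_x$ jumps from $0$ (on the constant piece) down to a negative value (on the profile), so $v_{xx}$ acquires a negative Dirac mass and $v_t-v_{xx}-f(v)$ a positive one --- the glued function is \emph{not} a subsolution. (For the supersolution truncated by $1$ the jump has the favourable sign, so that side is fine.) The paper's device is to replace $w$ by $g(w)=w(1-Mw)$ with $M$ large: this simultaneously absorbs the $(1-Ks)$ defect in \eqref{fu2} and, crucially, has an interior maximum at $w=\tfrac{1}{2M}$ where $g'(w)=0$, so the constant $\tfrac{1}{4M}=\max g$ glues in a $C^1$ fashion at $x_M(t)$ and the differential inequality survives the junction. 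Without this (or an equivalent capping mechanism), your plan of "truncating at a fixed level and checking the junction" does not close; everything else in your outline matches the paper's proof.
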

 It is easy to check that initial data $u_0(x)\asymp e^{-\mu x^\beta}$ satisfy \eqref{genoo} in the regime $\beta<\frac{1}{\alpha+1}$. Thus, according to the above lemma, we obtain the following theorem.  
	
	\begin{theorem}\label{ei}
		Let $\alpha>0$ and $\beta>0$ be such that
		\begin{equation*}
			\beta<\frac{1}{\alpha+1}.
		\end{equation*}
		 Assume that the nonlinearity $f$ and the initial data $u_0$ satisfy Hypotheses \ref{fu} and \ref{ic}, respectively. Assume that there exists $x_0>0$ and $\mu>0$  such that
		\begin{equation*}
			u_0(x)\asymp e^{-\mu x^\beta} \quad \text{for any } x\ge x_0.
		\end{equation*}
		Then, for any $\lambda\in(0,1)$ and $\varepsilon>0$, there exists a time $T_{\lambda,\varepsilon}'$ such that \footnote{The notation $a\asymp_{\Lambda_1, \Lambda_2,...} b$ means that there exists a constant $C_{\Lambda_1, \Lambda_2,...}$, depending on some constants $\Lambda_1$, $\Lambda_2$,..., such that $C_{\Lambda_1, \Lambda_2,...} b\le a \le C^{-1}_{\Lambda_1, \Lambda_2,...} b$.}
		\begin{equation*}
		 x_\lambda(t) \asymp_{\lambda,\varepsilon,\mu} t^{\frac{1}{\beta(\alpha+1)}} \quad \text{for any } t>T_{\lambda,\varepsilon}'.
		\end{equation*}
	\end{theorem}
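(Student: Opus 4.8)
The plan is to deduce Theorem \ref{ei} from Lemma \ref{gen} by a comparison argument, since the hypothesis $u_0(x)\asymp e^{-\mu x^\beta}$ provides only two-sided bounds and guarantees neither smoothness nor monotonicity, whereas Lemma \ref{gen} requires a $C^2$, eventually non-increasing profile satisfying \eqref{genoo}. First I would sandwich $u_0$ between two admissible envelopes. Writing the constant in the relation $\asymp$ as $C\in(0,1)$, so that $Ce^{-\mu x^\beta}\le u_0(x)\le C^{-1}e^{-\mu x^\beta}$ for $x\ge x_0$, I set $\underline u_0(x):=Ce^{-\mu x^\beta}$ and $\bar u_0(x):=\min\{1,C^{-1}e^{-\mu x^\beta}\}$ on the right tail, extended (and smoothed near the finite truncation point) to functions of class $C^2$ that are non-increasing on some $[\xi_0,+\infty)$, satisfy Hypothesis \ref{ic}, and obey $\underline u_0\le u_0\le \bar u_0$ everywhere. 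Both envelopes are of the form $(\text{const})\cdot e^{-\mu x^\beta}$ for large $x$, so for each of them $\varphi_0(x)=-\ln u_0=\mu x^\beta+O(1)$.

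Next I would check that these envelopes satisfy \eqref{genoo} precisely in the regime $\beta<\tfrac1{\alpha+1}$. For $\varphi_0(x)=\mu x^\beta+O(1)$ one has $\varphi_0'(x)\sim\mu\beta x^{\beta-1}$, $\varphi_0''(x)\sim\mu\beta(\beta-1)x^{\beta-2}$ and $\varphi_0^{-\alpha}(x)\sim(\mu x^\beta)^{-\alpha}$; thus $\varphi_0'=o(\varphi_0^{-\alpha})$ reduces to $x^{\beta-1}=o(x^{-\alpha\beta})$, i.e.\ $\beta-1<-\alpha\beta$, which is exactly $\beta(\alpha+1)<1$, while $\varphi_0''=o(\varphi_0')$ reduces to $x^{\beta-2}=o(x^{\beta-1})$ and holds automatically. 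This is the point at which the constraint $\beta<\tfrac1{\alpha+1}$ enters, complementing the regime treated in Theorem \ref{lin}.

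With \eqref{genoo} verified, I would apply Lemma \ref{gen} to the solutions $\underline u,\bar u$ of \eqref{oeq1} issued from $\underline u_0,\bar u_0$. Because each envelope is strictly decreasing on its right tail, the containment $E_\lambda\subset u_0^{-1}\{[e^{-a(t)},e^{-b(t)}]\}$, with $a(t)=[(r+\varepsilon)(\alpha+1)t]^{1/(\alpha+1)}$ and $b(t)=[(r-\varepsilon)(\alpha+1)t]^{1/(\alpha+1)}$, can be inverted to pin the rightmost level point: evaluating at $x_\lambda(t)=\sup E_\lambda(t)$ gives $e^{-a(t)}\le u_0(x_\lambda(t))\le e^{-b(t)}$, hence $(\,b(t)/\mu\,)^{1/\beta}(1+o(1))\le x_\lambda(t)\le(\,a(t)/\mu\,)^{1/\beta}(1+o(1))$. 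Since both $a(t)^{1/\beta}$ and $b(t)^{1/\beta}$ are proportional to $t^{1/(\beta(\alpha+1))}$, this yields $x_\lambda^{\bar u}(t)\le C_1\,t^{1/(\beta(\alpha+1))}$ and $x_\lambda^{\underline u}(t)\ge C_2\,t^{1/(\beta(\alpha+1))}$ for large $t$, with constants depending only on $\mu,\varepsilon,r,\alpha,\beta$. Finally the parabolic comparison principle gives $\underline u\le u\le\bar u$, whence $E_\lambda^{\underline u}(t)\subset E_\lambda^{u}(t)\subset E_\lambda^{\bar u}(t)$ and $x_\lambda^{\underline u}(t)\le x_\lambda^{u}(t)\le x_\lambda^{\bar u}(t)$; combining the two one-sided estimates delivers $x_\lambda(t)\asymp_{\lambda,\varepsilon,\mu}t^{1/(\beta(\alpha+1))}$.

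The main obstacle I anticipate is not the asymptotic bookkeeping but the passage from the set containment of Lemma \ref{gen} to genuine two-sided control of the single quantity $x_\lambda(t)$: this requires knowing that the supremum defining $x_\lambda(t)$ is attained (so that $u_0(x_\lambda(t))$ lands in the prescribed interval) and that the chosen envelope is monotone on the relevant tail, so that its inverse is well defined. Ensuring the envelopes are simultaneously $C^2$, monotone on a half-line, compatible with Hypothesis \ref{ic}, and sandwich $u_0$ while remaining comparable to $e^{-\mu x^\beta}$ is the step that must be carried out with care; once it is in place, the comparison principle together with Lemma \ref{gen} does the rest.
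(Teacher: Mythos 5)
Your proposal is correct and follows essentially the same route as the paper, which obtains Theorem \ref{ei} directly from Lemma \ref{gen} after checking that profiles comparable to $e^{-\mu x^\beta}$ satisfy \eqref{genoo} exactly when $\beta<\tfrac{1}{\alpha+1}$. Your additional care in sandwiching $u_0$ between smooth monotone envelopes and invoking the comparison principle fills in a technicality the paper leaves implicit (since $\asymp$ alone gives neither the $C^2$ regularity nor the monotonicity required by Lemma \ref{gen}), but the substance of the argument is the same.
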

 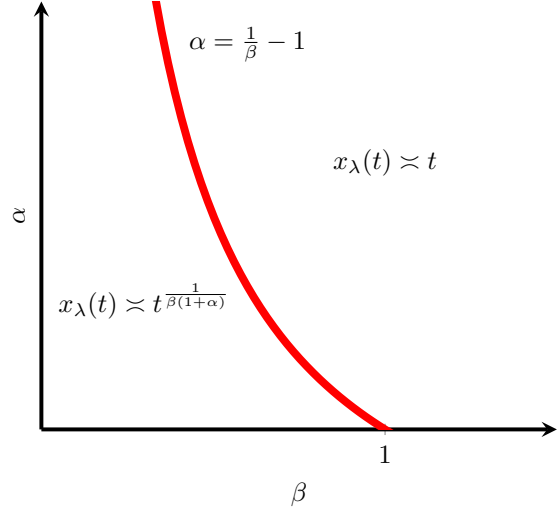
\begin{figure}
     \centering
\begin{tikzpicture}
    \begin{axis}[
    axis lines = left,
    xlabel = {$\beta$},
    ylabel = {$\alpha$},
    xmin = 0,
    xmax = 1.5,
    ymin = 0,
    ymax = 2,
    xtick = {1},
    xticklabels = {1},
    ytick = \empty,
    line width =1.5,
    ]
    \addplot[ 
    color = red,
    domain = 0.1:2,
    samples = 100,
    line width =3,
    ] {1/x-1}; 
\node[anchor= center] at (axis cs:0.3,0.6) {$x_\lambda(t)\asymp t^{\frac{1}{\beta(1+\alpha)}}$};
\node[anchor=center] at (axis cs:1,1.25) {$x_\lambda(t)\asymp t$};
\node[anchor=center] at (axis cs:0.6,1.8) {$\alpha=\frac{1}{\beta}-1$};
\end{axis}
\end{tikzpicture}
    \caption{The separation for sub-exponential decay initial data case.}
 \end{figure}
 
For initial data with algebraic tails, that is $u_0(x)\asymp x^{-\beta}$ for $\beta>0$, by Lemma \ref{gen}, we just obtain a rough estimate: 
\begin{equation*}
 C_1 e^{\frac{1}{\beta}((r-\varepsilon)(\alpha+1)t)^{\frac{1}{\alpha+1}}}   \le x_\lambda(t)\le C_2 e^{\frac{1}{\beta}((r+\varepsilon)(\alpha+1)t)^{\frac{1}{\alpha+1}}}
\end{equation*}
for some constants $C_1$ and $C_2$. Notice that the position of the level set depends strongly on the constant $\varepsilon$. Hence the estimate is not enough for such initial data.
\par

To get an exact estimate of the position of the level sets, we add a concavity assumption, that we believe not to be a huge restriction given that classical sub-exponentials are usually log-concave functions.
  \begin{lemma}\label{gen1}	
   Assume that the nonlinearity $f$ and the initial data satisfy Hypotheses \ref{fu} and \ref{ic}, respectively. Assume that $u_0$ is of class $C^2$ and  nonincreasing on $[\xi_0,+\infty)$ for some $\xi_0>0$, and
		\begin{equation*}
			\varphi_0'(x)=o(\varphi_0^{-\alpha}(x)) \text{  and  }\varphi_0''(x)=o(\varphi_0'(x))  \quad \text{as }x\to +\infty.
		\end{equation*}
   Assume that  
   \begin{equation}\label{au''}
				\varphi_0''(x)\le 0 \quad \text{for large }x.
			\end{equation}	
   Then, for any $\lambda\in(0,1)$, there are two constants $\underline C_\lambda>0$ and $\bar C_\lambda>0$ and a time $ T_{\lambda}$ such that
			\begin{equation}\label{gen1e}
				E_\lambda(t)\subset u^{-1}_0\left\{\left[\bar C_\lambda e^{-[r(\alpha+1)t]^{\frac{1}{\alpha+1}}},\underline C_\lambda e^{-[r(\alpha+1)t]^{\frac{1}{\alpha+1}}}\right]\right\} \quad \text{for any }t>T_{\lambda}.
			\end{equation}
		\end{lemma}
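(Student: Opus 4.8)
The plan is to establish the two-sided localization via the comparison principle, by building an explicit super-solution that forces the right-hand bound $u_0(x)\ge \bar C_\lambda e^{-[r(\alpha+1)t]^{1/(\alpha+1)}}$ on $E_\lambda(t)$ and an explicit sub-solution that forces the left-hand bound $u_0(x)\le \underline C_\lambda e^{-[r(\alpha+1)t]^{1/(\alpha+1)}}$. Writing $w=-\ln u$, equation \eqref{oeq1} becomes $w_t=w_{xx}-w_x^2-f(e^{-w})e^{w}$, and since $|\ln u|=w$ the reaction quotient $f(u)/u$ is comparable to $r/w^\alpha$ near the leading edge by Hypothesis \ref{fu}. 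The guiding intuition is the pure-kinetics balance $w_t=-r/w^\alpha$, whose solution issued from $w(0,x)=\varphi_0(x)$ is $w(t,x)=[\varphi_0(x)^{\alpha+1}-r(\alpha+1)t]^{1/(\alpha+1)}$; setting $w=-\ln\lambda$ and solving for $x$ gives $\varphi_0(x)=[r(\alpha+1)t+(-\ln\lambda)^{\alpha+1}]^{1/(\alpha+1)}$, i.e. $u_0(x)\asymp e^{-[r(\alpha+1)t]^{1/(\alpha+1)}}$ up to a bounded factor. The whole game is therefore to show that the diffusion terms $w_{xx}-w_x^2$ perturb this phase by only a bounded additive amount, which is exactly what upgrades the $\varepsilon$-in-the-exponent of Lemma \ref{gen} to the multiplicative constants $\underline C_\lambda,\bar C_\lambda$.

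For the super-solution I would take the ansatz $\bar u=\min\{1,\bar C\,e^{-\bar w}\}$ with $\bar w(t,x)=[\varphi_0(x)^{\alpha+1}-r(\alpha+1)t]^{1/(\alpha+1)}$ on the set where the bracket is positive, and verify $\bar u_t-\bar u_{xx}-f(\bar u)\ge 0$. After dividing by $\bar u>0$ this reduces to $-\bar w_t+\bar w_{xx}-\bar w_x^2-f(\bar u)/\bar u\ge 0$. The kinetic part produces, via \eqref{fu1}, the favourable surplus $-\bar w_t-f(\bar u)/\bar u\ge r\bar w^{-\alpha}-r(1+\bar w)^{-\alpha}\gtrsim \bar w^{-\alpha-1}$, while the concavity hypothesis \eqref{au''} is used precisely here: writing $\bar w=g(\varphi_0)$ with $g$ increasing and, as one checks, concave, gives $\bar w_{xx}=g''(\varphi_0)(\varphi_0')^2+g'(\varphi_0)\varphi_0''\le 0$, so the second-order term carries a definite sign. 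The remaining negative contributions $\bar w_{xx}-\bar w_x^2$ are then estimated through \eqref{genoo}, since $\varphi_0'=o(\varphi_0^{-\alpha})$ controls $\bar w_x^2=o(\bar w^{-2\alpha})$ and $\varphi_0''=o(\varphi_0')$ controls $g'\varphi_0''$; one verifies these are dominated by the surplus, possibly after introducing a bounded shift $\varphi_0\mapsto\varphi_0+b(t)$ with $b$ uniformly bounded, which only affects the final constant $\bar C_\lambda$.

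The sub-solution is built symmetrically from the lower bound \eqref{fu2}, the extra factor $(1-Ks)$ being handled by restricting the construction to the region where $\underline u\le s_0$ (near the front $u$ is small, so $1-K\underline u\ge 1-Ks_0$ is harmless) and gluing to a constant sub-solution on the left; here the concavity makes diffusion act in the favourable direction. The translation from these phase bounds to the statement is then routine monotonicity: on $E_\lambda(t)$ one has $w(t,x)=-\ln u(t,x)\le-\ln\lambda$, and comparison with $\bar w,\underline w$ pinches $[\varphi_0(x)^{\alpha+1}-r(\alpha+1)t]^{1/(\alpha+1)}$ between two $\lambda$-dependent constants, whence $\varphi_0(x)-[r(\alpha+1)t]^{1/(\alpha+1)}$ is bounded and $u_0(x)=e^{-\varphi_0(x)}$ lies in the interval $[\bar C_\lambda e^{-\psi(t)},\underline C_\lambda e^{-\psi(t)}]$ with $\psi(t)=[r(\alpha+1)t]^{1/(\alpha+1)}$; since $u_0$ is eventually non-increasing, this is the claimed inclusion.

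I expect the main obstacle to be the sign-and-size bookkeeping in the super-solution check. Unlike the $\varepsilon$-rate argument of Lemma \ref{gen}, where the corrections are simply swallowed by replacing $r$ with $r+\varepsilon$, here I must show that the genuinely lower-order (in $\bar w$) diffusion terms are beaten by the thin reaction surplus $\sim\bar w^{-\alpha-1}$ uniformly along the moving front. This is delicate precisely when $\alpha\le 1$, since then $\bar w_x^2=o(\bar w^{-2\alpha})$ need not be smaller than the surplus, and it is the concavity \eqref{au''} together with the correct choice of the bounded shift $b(t)$ that saves the argument. Keeping $b(t)$ uniformly bounded, so that it contributes only to the constants $\underline C_\lambda,\bar C_\lambda$ and not to the exponent, is the crux of the proof.
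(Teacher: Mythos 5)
Your global architecture (the ODE phase $[(1+\varphi_0)^{\alpha+1}-r(\alpha+1)t]^{1/(\alpha+1)}$, the comparison principle, separate super- and sub-solutions) matches the paper's, but the mechanism you propose for absorbing the diffusion terms does not close, and you have in fact put your finger on the failure without resolving it. For the super-solution your only source of positivity is the kinetic surplus $r\bar w^{-\alpha}-r(1+\bar w)^{-\alpha}\sim \alpha r\,\bar w^{-\alpha-1}$, and you need it to dominate $\bar w_x^2=[\varphi_0'(1+\varphi_0)^\alpha]^2\bar w^{-2\alpha}$. Deep in the tail, where $\bar w\sim\varphi_0$, this requires $(\varphi_0')^2\lesssim\varphi_0^{-\alpha-1}$, whereas the hypothesis only gives $\varphi_0'=o(\varphi_0^{-\alpha})$: for $u_0=e^{-\mu x^\beta}$ with $\alpha<1$ and $\beta\in\bigl(\tfrac{2}{3+\alpha},\tfrac{1}{1+\alpha}\bigr)$ the lemma's hypotheses hold but $(\varphi_0')^2\gg\varphi_0^{-\alpha-1}$, so your differential inequality fails. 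Concavity cannot rescue this: for a super-solution you need $\bar w_{xx}-\bar w_x^2$ controlled from \emph{below}, and \eqref{au''} gives $\bar w_{xx}\le 0$, i.e.\ the wrong sign (the paper uses \eqref{au''} only for the lower bound); a bounded shift $b(t)$ only changes multiplicative constants and cannot manufacture a dominating term. What the paper actually does is shift in space by $t$: setting $m(t,x)=w(t+t^1,x-t)$ introduces a transport term $-w_x\ge 0$ of relative size $\varphi_0'(1+\varphi_0)^\alpha$, and Lemma \ref{lem22} shows $w_x+w_{xx}\le 0$ because $|w_{xx}|$ is of relative size $[\varphi_0'(1+\varphi_0)^\alpha]^2+|\varphi_0''|(1+\varphi_0)^\alpha=o\bigl(\varphi_0'(1+\varphi_0)^\alpha\bigr)$; no surplus is needed and all $\alpha>0$ are treated at once. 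The shift is linear in $t$, not bounded, and it is harmless only because the front accelerates super-linearly, $U_0\bigl(Ce^{-(r(\alpha+1)t)^{1/(\alpha+1)}}\bigr)/t\to\infty$ — that, not boundedness of the shift, is why the conclusion survives with a multiplicative constant.

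The sub-solution as you describe it also fails, already at the kinetic level: with $\underline u=\underline C e^{-\underline w}$ and $\underline C<1$ one has $f(\underline u)/\underline u\le r(1+\underline w+|\ln\underline C|)^{-\alpha}<r\underline w^{-\alpha}=-\underline w_t$, so the pure phase moves too fast to be a sub-solution, and the factor $(1-Ks)$ in \eqref{fu2} makes the deficit worse; restricting to $\{\underline u\le s_0\}$ does not remove it. The paper's fix is the logistic correction $v=g(w)=w(1-Mw)$: the mismatch between $v_t=w_t(1-2Mw)$ and $f(v)\ge r\frac{v}{(1-\ln v)^\alpha}(1-Kv)$ produces a negative term of order $-Mw^2/(1-\ln w)^\alpha$ which, for $M$ large, absorbs the $K$-penalty, the discrepancy between $(1-\ln v)^{-\alpha}$ and $(1-\ln w)^{-\alpha}$, and the term $2Mw_x^2$; concavity enters only to give $w_{xx}\ge 0$ so that $(1-2Mw)w_{xx}$ has the favourable sign. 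Without this quadratic correction, and without the drift trick above, neither half of your argument yields the claimed inclusion.
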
	

 We point out that \eqref{au''} is used only in the proof of the lower bound. Our approach can be used to prove the exact result in the KPP situation of \cite{2009Fast}.
\par
  Equipped with the above lemma, we can get exact estimates for the level sets of the solution to equation \eqref{oeq1} with the algebraic decay initial data. We check the assumptions in Lemma \ref{gen1} and obtain the following theorem.
  \begin{theorem}\label{xbeta}			
Assume that the nonlinearity $f$ and the initial data satisfy Hypotheses \ref{fu} and \ref{ic}, respectively. Assume that there exist $x_0>1$ and $\beta>0$ such that
			\begin{equation*}
				u_0(x)\asymp \frac{1}{x^\beta} \quad \text{for any }x\ge x_0.
			\end{equation*}
   Then, for any $\lambda\in(0,1)$, there exists a time $ T_{\lambda}'$ such that
			\begin{equation*}
			x_\lambda(t) \asymp_\lambda e^{\frac{1}{\beta}[r(\alpha+1)t]^\frac{1}{\alpha+1}} \quad \text{for any }t> T_{\lambda}'.
			\end{equation*}
		\end{theorem}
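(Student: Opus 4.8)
The plan is to deduce Theorem \ref{xbeta} from Lemma \ref{gen1} after two reductions. Write $L(t):=[r(\alpha+1)t]^{1/(\alpha+1)}$, so that the target estimate reads $x_\lambda(t)\asymp_\lambda e^{L(t)/\beta}$. The hypothesis only provides $u_0(x)\asymp x^{-\beta}$, which need not be $C^2$, non-increasing, or log-concave, so Lemma \ref{gen1} cannot be applied to $u_0$ directly. First I would therefore trap $u_0$ between two genuine (smoothed) power laws and invoke the parabolic comparison principle; then I would verify \eqref{genoo}--\eqref{au''} for a pure power law and invert the level-set localization supplied by the lemma.

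For the comparison step, fix $C\in(0,1)$ with $Cx^{-\beta}\le u_0(x)\le C^{-1}x^{-\beta}$ for $x\ge x_0$. I would construct $\underline u_0,\bar u_0:\mathbb{R}\to[0,1]$ of class $C^2$, non-increasing on some $[\xi_0,+\infty)$ and asymptotically front-like, with $\underline u_0\le u_0\le \bar u_0$ on $\mathbb{R}$, and such that $\underline u_0(x)=Cx^{-\beta}$ and $\bar u_0(x)=C^{-1}x^{-\beta}$ for all large $x$ (on the left one interpolates smoothly and monotonically to a positive constant, using $\liminf_{x\to-\infty}u_0>0$ and $u_0\le 1$ to preserve both the ordering and the range in $[0,1]$). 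Denoting by $\underline u,\bar u$ the solutions of \eqref{oeq1} with these data and the same $f$, the comparison principle gives $\underline u(t,\cdot)\le u(t,\cdot)\le\bar u(t,\cdot)$, whence $x_\lambda^{\underline u}(t)\le x_\lambda(t)\le x_\lambda^{\bar u}(t)$. It thus suffices to establish the estimate for the two pure power-law data, i.e. for initial data of the exact form $w_0(x)=cx^{-\beta}$ with $c>0$.

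For such $w_0$ one has $\varphi_0(x)=-\ln w_0(x)=\beta\ln x-\ln c$, so $\varphi_0'(x)=\beta/x>0$ and $\varphi_0''(x)=-\beta/x^2\le 0$ for large $x$, giving the monotonicity and the concavity \eqref{au''}. Moreover $\varphi_0'(x)/\varphi_0^{-\alpha}(x)=(\beta/x)(\beta\ln x-\ln c)^{\alpha}\to 0$ and $\varphi_0''(x)/\varphi_0'(x)=-1/x\to 0$ as $x\to+\infty$, so \eqref{genoo} holds; since $w_0$ satisfies Hypothesis \ref{ic} and $f$ satisfies Hypothesis \ref{fu}, Lemma \ref{gen1} applies. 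It yields constants $\underline C_\lambda,\bar C_\lambda>0$ such that, for $t$ large, the containment of $E_\lambda(t)$ localizes $x_\lambda^{w}(t)$ in $w_0^{-1}\{[\bar C_\lambda e^{-L(t)},\underline C_\lambda e^{-L(t)}]\}$; because $w_0$ is decreasing on its tail, this preimage is the interval with endpoints $(c/\underline C_\lambda)^{1/\beta}e^{L(t)/\beta}$ and $(c/\bar C_\lambda)^{1/\beta}e^{L(t)/\beta}$. Inverting $w_0(x)=cx^{-\beta}$ therefore gives $x_\lambda^{w}(t)\asymp_\lambda e^{L(t)/\beta}$. Applying this to $w_0=\bar u_0$ (for the upper bound) and $w_0=\underline u_0$ (for the lower bound) and combining with the comparison inequalities yields $x_\lambda(t)\asymp_\lambda e^{L(t)/\beta}$, which is the claim.

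I expect the delicate point to be the first reduction rather than the computation: one must produce admissible comparison data that are simultaneously $C^2$, non-increasing on a tail, front-like with range in $[0,1]$, and that pinch $u_0$ while coinciding with exact power laws far out. Note that the concavity \eqref{au''} is precisely what Lemma \ref{gen1} requires for the lower bound, which is why both the majorant $\bar u_0$ and the minorant $\underline u_0$ must be chosen as honest power laws; once this sandwich is in place, verifying \eqref{genoo}--\eqref{au''} and inverting the localization are routine.
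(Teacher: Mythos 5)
Your proposal is correct and follows the same route the paper intends: the paper proves Theorem \ref{xbeta} simply by checking the hypotheses \eqref{genoo}--\eqref{au''} of Lemma \ref{gen1} for algebraic tails and inverting the resulting localization $E_\lambda(t)\subset u_0^{-1}\{[\bar C_\lambda e^{-L(t)},\underline C_\lambda e^{-L(t)}]\}$, exactly as you do. Your additional sandwich between two smoothed exact power laws is a sound (and welcome) way to supply the $C^2$, monotonicity and log-concavity requirements that the mere assumption $u_0\asymp x^{-\beta}$ does not guarantee and that the paper leaves implicit.
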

Observe that when $\alpha=0$, one recovers the rate of the KPP situation \cite{2009Fast,henderson2016propagation}.
 For the degenerate monostable case, Alfaro \cite{alfaro2017slowing} shows that if $f(s)=s^{1+\alpha}(1-s)$ then  
 \begin{equation*}
     x_\lambda(t)\asymp t^{\frac{1}{\alpha\beta}}\quad \text{for } t \text{ large enough},
 \end{equation*}
 where $0<\alpha< \frac{1}{\beta}$.
 \par
 Thanks to Lemma \ref{gen1}, we can also get the following theorem for the initial data $u_0(x)\asymp (\ln x)^{-\beta}$.
 \begin{theorem}\label{lnxbeta}
			 Assume that the nonlinearity $f$ and the initial data $u_0$ satisfy Hypotheses \ref{fu} and \ref{ic}, respectively. Assume that there exists $x_0>e$ and $\beta>0$ such that
			\begin{equation}\label{u0lnb}
			u_0(x)\asymp (\ln x)^{-\beta}\quad \text{for any }x\ge x_0.
			\end{equation}
			Then,
			for any $x(t)\in E_\lambda(t)$, there exists a time $ T_{\lambda}''$, such that
			\begin{equation*}
				\ln x_\lambda(t) \asymp_\lambda e^{\frac{1}{\beta}[r(\alpha+1)t]^\frac{1}{\alpha+1}} \quad \text{for any }t> T_{\lambda}''.
			\end{equation*}
		\end{theorem}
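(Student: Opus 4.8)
The plan is to obtain this as a direct consequence of Lemma \ref{gen1}, exactly as Theorem \ref{xbeta} was obtained. The work splits into two tasks: first, checking that an initial datum with $u_0(x)\asymp(\ln x)^{-\beta}$ meets the hypotheses of Lemma \ref{gen1}; second, inverting the level-set localization \eqref{gen1e} to read off the growth of $\ln x_\lambda(t)$. Since Lemma \ref{gen1} is stated for a $C^2$ non-increasing profile whereas here I only control $u_0$ two-sidedly up to a multiplicative constant and with no regularity, I would first sandwich $u_0$ between two smooth non-increasing profiles $\underline w_0(x)=c_1(\ln x)^{-\beta}$ and $\overline w_0(x)=c_2(\ln x)^{-\beta}$ (suitably modified on a compact set so that they are globally $C^2$, non-increasing for large $x$, and valued in $[0,1]$), arranged so that $\underline w_0\le u_0\le \overline w_0$. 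The parabolic comparison principle then gives $\underline u\le u\le \overline u$ for the corresponding solutions, hence $x_\lambda^{\underline u}(t)\le x_\lambda(t)\le x_\lambda^{\overline u}(t)$.

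Next I would verify the three conditions of Lemma \ref{gen1} on the representative profile $(\ln x)^{-\beta}$, for which $\varphi_0(x)=\beta\ln\ln x$ up to an additive constant, so that $\varphi_0'(x)=\frac{\beta}{x\ln x}$ and $\varphi_0''(x)=-\frac{\beta(1+\ln x)}{x^2(\ln x)^2}$. The concavity condition \eqref{au''} holds since $\varphi_0''(x)<0$ for large $x$; the condition $\varphi_0'(x)=o(\varphi_0^{-\alpha}(x))$ holds because $\varphi_0'(x)\,\varphi_0^{\alpha}(x)\asymp \frac{(\ln\ln x)^\alpha}{x\ln x}\to 0$; and $\varphi_0''(x)=o(\varphi_0'(x))$ holds because $\varphi_0''(x)/\varphi_0'(x)=-\frac{1+\ln x}{x\ln x}\to 0$. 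A constant prefactor only shifts $\varphi_0$ by a constant and leaves $\varphi_0'$ and $\varphi_0''$ unchanged, so all three conditions hold for both $\underline w_0$ and $\overline w_0$.

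With the hypotheses verified, I would apply Lemma \ref{gen1} to $\underline u$ and $\overline u$. Writing $m(t):=[r(\alpha+1)t]^{1/(\alpha+1)}$, the localization \eqref{gen1e} and the comparison bounds give, at the point $x_\lambda(t)\in E_\lambda(t)$ (attained since $u(t,\cdot)$ is continuous and tends to $0$ at $+\infty$), the membership $\bar C_\lambda e^{-m(t)}\le w_0\big(x_\lambda(t)\big)\le \underline C_\lambda e^{-m(t)}$ for the relevant comparison profile. Combining the left inequality with the upper bound $w_0(x)\le c_2(\ln x)^{-\beta}$ yields $(\ln x_\lambda(t))^{\beta}\le \frac{c_2}{\bar C_\lambda}e^{m(t)}$, i.e.\ an upper bound $\ln x_\lambda(t)\lesssim e^{m(t)/\beta}$; combining the right inequality with the lower bound $w_0(x)\ge c_1(\ln x)^{-\beta}$ yields $(\ln x_\lambda(t))^{\beta}\ge \frac{c_1}{\underline C_\lambda}e^{m(t)}$, i.e.\ a matching lower bound. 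Together these give $\ln x_\lambda(t)\asymp_\lambda e^{\frac{1}{\beta}[r(\alpha+1)t]^{1/(\alpha+1)}}$, which is the claim.

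The only genuine obstacle is the first task: reconciling the $\asymp$ assumption (no smoothness, control only up to a constant) with the $C^2$, monotonicity and asymptotic differential conditions required by Lemma \ref{gen1}. The sandwiching-plus-comparison device resolves this, and one must take minor care that the smoothed comparison profiles are genuinely non-increasing beyond some $\xi_0$; once that is in place, the verification of \eqref{genoo}--\eqref{au''} and the final inversion are routine computations.
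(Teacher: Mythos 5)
Your proof is correct and follows the same route as the paper: deduce the result from Lemma \ref{gen1} by verifying \eqref{genoo} and \eqref{au''} for $\varphi_0(x)=\beta\ln\ln x$ (so $\varphi_0'(x)=\tfrac{\beta}{x\ln x}$, $\varphi_0''(x)=-\tfrac{\beta(1+\ln x)}{x^2(\ln x)^2}\le 0$) and then inverting the level-set localization \eqref{gen1e}. Your explicit sandwiching of $u_0$ between smooth non-increasing profiles $c_i(\ln x)^{-\beta}$ is a welcome extra precaution that the paper leaves implicit, since the hypothesis $u_0\asymp(\ln x)^{-\beta}$ alone does not supply the $C^2$ regularity and monotonicity required by Lemma \ref{gen1}.
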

  \begin{remark}
    One can obtain Lemma \ref{gen} and Theorem \ref{ei} under the weaker hypothesis
      \begin{equation*}
          f(s)\sim r \frac{s}{(1+|\ln s|)^\alpha} \quad \text{as } s\to 0^+,
      \end{equation*}
      for some $r>0$ and $\alpha>0$.
However, Lemma \ref{gen1} and Theorems \ref{xbeta} and \ref{lnxbeta} need crucially Hypothesis \ref{fu} with both precise bounds \eqref{fu1} and \eqref{fu2} for $f$. An insight can be easily seen in the proofs in Sections \ref{s3} and \ref{s4}.

  \par    
      Nevertheless, Lemmas \ref{gen} and \ref{gen1} and Theorems \ref{ei}, \ref{xbeta} and \ref{lnxbeta}, are true under Hypothesis \ref{fu} where \eqref{fu2} is replaced by
      \begin{equation*}
          f(s)\ge r \frac{s}{(1+|\ln s|)^\alpha}(1-Ks^\delta) \quad \text{for any } s\in(0,s_0],
      \end{equation*}
      for some $\delta>0$, $s_0\in(0,1)$ and $K\ge 0$. Their proofs are similar to the one we provide in Sections \ref{s3} and \ref{s4} but a bit messier so we have chosen to stick to $\delta = 1$ for the sake of readability.
  \end{remark}
  \par
 The rest of this paper is organized as follows. In Section \ref{s2}, we shall prove that the solution to equation \eqref{oeq1}, starting from an exponentially unbounded initial data, propagates at constant speed. In Section \ref{s3} and Section \ref{s4}, we provide the proof of the main results, Lemma \ref{gen} and Lemma \ref{gen1}, respectively.  In Section \ref{s5}, some numerical simulations shall be given to illustrate our main results.
 \par
This paper is the first part of our work on weakly monostable equations; a companion paper \cite{bouincovillezhang2023} with non-local dispersal follows. In this latter paper, we have proved the existence and nonexistence of traveling waves, and studied the effect of the tails of the dispersal kernel on the propagation rate. Exact rates of invasion have been provided for the sub-exponential and algebraic tails.
	
\section{Finite speed propagation: Proof of Theorem \ref{lin}}\label{s2}

In this section, we prove Theorem \ref{lin}: the level sets of the solution to \eqref{oeq1} moves at a constant speed. 
 \par
As in \cite[Theorem 2.3]{alfaro2017slowing}, we can also obtain that, for any $\lambda\in(0,1)$, there is a time $t_\lambda>0$ and $\Gamma>0$ such that 
\begin{equation}\label{enoe}
    \emptyset \neq E_\lambda(t)\subset(\Gamma t,+\infty) \quad \text{for any }t>t_\lambda.
\end{equation}  
Indeed, we consider the equation
\begin{equation}\label{lvequ}
\left\{
\begin{aligned}
&v_t-v_{xx}-r_1v^2(1-v)=0, &t>0,x\in\mathbb{R},\\
&v(0,x)=v_0\ge 0,
\end{aligned}\right.
\end{equation}
where the initial data $v_0(x)=\inf_{y\le 0}u_0(y)\mathds{1}_{(-\infty,0)}(x)$ and $r_1>0$ small enough so that $r_1s^2(1-s)\le f(s)$ for all $s\in(0,1)$.
According to \cite{zlatovs2005quenching}, the solution $v(t,x)$ to \eqref{lvequ} satisfies $\lim_{t\to \infty}\inf_{x\le \Gamma t}v(t,x)=1$ for some $\Gamma>0$. It follows from the comparison principle that propagation of $u(t,x)$ is at least linear, that is,
\begin{equation}\label{uto1}
\liminf_{t\to \infty\  x\le \Gamma t} u(t,x)=1.
\end{equation}
On the other hand, we can reproduce the proof of \cite[Theorem 1.1 part a]{2009Fast}, which does not require the KPP assumption, and get 
\begin{equation}\label{uto0}
    \lim_{x\to+\infty} u(t,x)=0 \quad \text{for any }t\ge 0.
\end{equation}
Thus, combining \eqref{uto1} and \eqref{uto0}, we can conclude \eqref{enoe}.

\par
  Inspired by \cite{alfaro2017slowing}, for the initial data with sub-exponential decay, we use a suitable shifted profile which construction now follows. Take $\alpha>0$ and $\beta>0$ such that
\begin{equation*}
	\beta\ge \frac{1}{\alpha+1}.
\end{equation*}
Let us define
\begin{equation}\label{lfw}
 w(z):=Me^{-\mu z^p}(\le 1)\quad \text{for } z\ge z_0:=\Big(\frac{\ln M}{\mu}\Big)^{\frac{1}{p}},
\end{equation}
where $p:= \frac{1}{\alpha+1}<1$ and $M>e$.
\begin{lemma}
Assume that $f$ satisfies Hypothesis \ref{fu}. Then, for any $M>e$, there is $c>0$ such that 
	\begin{equation*}
		w''(z)+cw'(z)+ f(w(z))\le 0, \quad \forall z\ge z_0.
	\end{equation*}
\end{lemma}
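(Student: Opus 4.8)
The plan is to verify the inequality directly by differentiating the explicit profile and exploiting the fact that the choice $p=\frac{1}{\alpha+1}$ is tuned precisely so that the transport term $cw'$ and the reaction term $f(w)$ sit at exactly the same algebraic order in $z$, while the diffusion term $w''$ is of \emph{strictly lower} order. First I would record, for $w(z)=Me^{-\mu z^p}$,
\[
w'(z)=-\mu p\,z^{p-1}w(z), \qquad w''(z)=\big(\mu p(1-p)z^{p-2}+\mu^2p^2 z^{2p-2}\big)w(z).
\]
Since $w(z)>0$, the claim is equivalent, after dividing by $w(z)$, to $\frac{w''(z)}{w(z)}+c\frac{w'(z)}{w(z)}+\frac{f(w(z))}{w(z)}\le 0$. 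For $z\ge z_0$ one has $w(z)\le 1$, so by \eqref{fu1} we get $\frac{f(w(z))}{w(z)}\le \frac{r}{(1+|\ln w(z)|)^\alpha}$; moreover $\ln w(z)=\ln M-\mu z^p\le 0$ on $[z_0,+\infty)$, whence $1+|\ln w(z)|=1-\ln M+\mu z^p\ge 1>0$. It therefore suffices to establish the stronger inequality
\[
\mu p(1-p)z^{p-2}+\mu^2p^2z^{2p-2}-c\mu p\, z^{p-1}+\frac{r}{(1-\ln M+\mu z^p)^\alpha}\le 0 \quad\text{for } z\ge z_0 .
\]

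Next I would factor out $z^{p-1}>0$ and use the two scaling identities that follow from $p=\frac{1}{\alpha+1}$, namely $p-1=-\alpha p$ and $1-p(1+\alpha)=0$. This reduces the task to proving $B(z)\le 0$, where
\[
B(z):=\mu p(1-p)z^{-1}+\mu^2p^2 z^{p-1}-c\mu p+\frac{r\,z^{1-p}}{(1-\ln M+\mu z^p)^\alpha}.
\]
As $z\to+\infty$ the first two terms vanish (both exponents are negative), and the last term converges to $r/\mu^\alpha$ precisely because $1-p-\alpha p=0$ forces the numerator and the dominant part of the denominator to have the same growth; thus $B(z)\to r\mu^{-\alpha}-c\mu p$. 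This makes transparent that diffusion ($z^{2p-2}=z^{2(p-1)}$) is lower order than both the transport term (the constant $-c\mu p$) and the reaction term (which tends to the constant $r\mu^{-\alpha}$), so a sufficiently large speed $c$ should win.

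To conclude I would argue uniform boundedness: on $[z_0,+\infty)$ the first two summands of $B$ are nonnegative and decreasing, hence bounded by their values at $z_0$, while $h(z):=\frac{z^{1-p}}{(1-\ln M+\mu z^p)^\alpha}$ is continuous with a finite limit $\mu^{-\alpha}$ at infinity and is therefore bounded. Denoting by $C_0$ a bound (depending only on $r,\mu,\alpha,M$, not on $c$) for the sum of these three positive terms, any choice $c\ge C_0/(\mu p)$ gives $B(z)\le -c\mu p+C_0\le 0$, which proves the lemma; the resulting profile $w(x-ct-\cdot)$ is then a supersolution propagating at speed $c$. The only genuine estimate, and the one I expect to require the most care, is the uniform boundedness of $h$ on $[z_0,+\infty)$: for large $z$ one uses $1-\ln M+\mu z^p\ge \tfrac12\mu z^p$ to get $h(z)\le 2^\alpha\mu^{-\alpha}z^{1-p-\alpha p}=2^\alpha\mu^{-\alpha}$, the exponent vanishing exactly by the identity $1-p(1+\alpha)=0$, and continuity handles the compact remainder $[z_0,z_1]$. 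Everything else is a routine consequence of choosing $c$ large.
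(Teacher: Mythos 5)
Your proof is correct and follows essentially the same route as the paper: compute $w'$, $w''$, bound $f(w)/w$ via \eqref{fu1}, exploit the identity $1-p(1+\alpha)=0$ together with $1-\ln M+\mu z^p\ge\tfrac{\mu}{2}z^p$ for large $z$ to see that the reaction term is of the same order $z^{p-1}$ as the transport term while the diffusion terms are of lower order, and then take $c$ large. The only (cosmetic) difference is that you factor out $z^{p-1}$ and absorb everything into a single uniform constant $C_0$ on $[z_0,\infty)$, whereas the paper argues separately on $[z_2,\infty)$ and on the compact set $[z_0,z_2]$.
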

\begin{proof}
By definition of $w$, we have, for $z\ge z_0$,
\begin{equation*}
	    w'(z)=-\mu pz^{p-1}w(z)\quad\text{and}\quad w''=\left(\mu p(1-p)z^{p-2}+\mu^2 p^2 z^{2(p-1)}\right)w(z).
	\end{equation*}
Since $1-\ln M+\mu z^p\ge \frac{\mu}{2}z^p$ for any $z\ge (\tfrac{2(\ln M-1)}{\mu})^{\frac{1}{p}}$,  then we have, for all $z\ge z_1:=\max\{z_0, (\tfrac{2(\ln M-1)}{\mu})^{\frac{1}{p}}\}$,
	\begin{equation*}
		\begin{aligned}
			w''(z)+cw'(z)+f(w(z))&\le \mu w(z)\left(\frac{\mu p^2}{z^{2(1-p)}}+\frac{p(1-p)}{z^{2-p}}-\frac{cp}{z^{1-p}}+\frac{r}{\mu(1-\ln M+ \mu z^p)^{\alpha}}\right)\\
			&\le \mu w(z)\left(\frac{\mu p^2}{z^{2(1-p)}}+\frac{p(1-p)}{z^{2-p}}-\frac{cp-\frac{ 2^\alpha r}{\mu ^{\alpha+1}}}{z^{1-p}}\right).
		\end{aligned}
	\end{equation*}
 Choosing $c>\tfrac{2^\alpha r}{\mu^{\alpha+1} p}$, the above is nonpositive for $z$ large enough, say $z\ge z_2$.  
	On the other hand, for the remaining region $z_0\le z\le z_2$, we have
	\begin{equation*}
		\begin{aligned}
			w''(z)+cw'(z)+f(w(z))&\le \mu w(z)\left(\frac{\mu p^2}{z^{2(1-p)}}+\frac{p(1-p)}{z^{2-p}}-\frac{cp}{z^{1-p}}+\frac{r}{\mu(1-\ln M+ \mu z^p)^{\alpha}}\right)\\
			&\le \mu w(z)\left(\frac{\mu p^2}{z_0^{2(1-p)}}+\frac{p(1-p)}{z_0^{2-p}}-\frac{cp}{z_2^{1-p}}+\frac{r}{\mu(1-\ln M+ \mu z_0^p)^{\alpha}}\right),
		\end{aligned}
	\end{equation*}
by taking $c$ large enough so that the above is nonpositive. 
\end{proof}
Equipped with the above lemma, we can construct a supersolution to \eqref{oeq1}. Let $M=\max\{e,e^{x_0^\beta}||u_0||_\infty\}$ and define
\begin{equation*}
v(t,x):=\left\{ 
\begin{aligned}
&w(x-x_0-ct), &x>ct+ x_0+z_0,\\
&1,  &x\le ct+ x_0+z_0,
 \end{aligned}\right.
\end{equation*}
where $c$ and $x_0$ is from the above lemma and \eqref{x0} respectively. 
We claim that $v(t,x)$ is a supersolution for \eqref{oeq1} for any $x\in\mathbb{R}$ and $t>0$.
Indeed, it is enough to check it when $v(t,x)<1$, that is, $x>ct+x_0+z_0$. It follows from the above lemma that 
\begin{equation*}
	v_t-v_{xx}-f(v)\ge-(cw'-w''+r\frac{w}{(1-\ln w)^\alpha})\ge 0.
\end{equation*}
 For $x>x_0+z_0$, since $p=\frac{1}{\alpha+1}\le \beta$ and \eqref{x0},  then we have
\begin{equation*}
	v(0,x)=Me^{-\mu (x-x_0)^p}\ge e^{x_0^\beta} ||u_0||_\infty e^{-\mu x^\beta} \ge u_0(x).
\end{equation*}
On the other hand, since $u_0\le 1$, for $x\le x_0+z_0$, we have $v(0,x)=1\ge u_0(x)$. 
\par
In the regime $\beta\ge \frac{1}{\alpha+1}$, the comparison principle then implies that for all $t>0$ and  $x\in\mathbb{R}$, we have
\begin{equation*}
	u(t,x)\le v(t,x)\le w(x-x_0-ct). 
\end{equation*}
Therefore, for any $\lambda\in(0,1)$, there is $T_\lambda$ large enough such that for all $t>T_\lambda$, we have
\begin{equation*}
	x_\lambda(t)\le x_0+\Big(\frac{1}{\mu}\ln \frac{M}{\lambda}\Big)^{\frac{1}{p}}+ct\le (c+1)t,
\end{equation*} 
which gives the upper bound of \eqref{linineq}. Together with \eqref{enoe},  the proof of Theorem \ref{lin} is complete.
\section{The acceleration regime: Proof of Lemma \ref{gen}}\label{s3}
In this section, we prove Lemma \ref{gen}: the level sets of solution to the equation  \eqref{oeq1} with  front-like initial data that is sub-exponentially unbounded move by accelerating, and the locations of the level sets are expressed in terms of the decay of the initial data. 
\par 
The long-time behaviour of the solution to the Cauchy problem \eqref{oeq1} is captured approximately by the ODE
\begin{equation}\label{ode}
	\left\{\begin{aligned}
		& w_t=\rho\frac{w}{(1-\ln w)^\alpha},&t>0, x\in\mathbb{R},\\
		& w(0,x)=u_0(x)\ge 0,&x\in \mathbb{R},
	\end{aligned}\right.
\end{equation}
where $\rho>0$ is to be determined. We solve the above ODE and obtain
\begin{equation}\label{wxt}
	w(t,x)=\exp{\left\{1-\left[(1+\varphi_0(x) )^{\alpha+1}-\rho(\alpha+1)t\right]^{\frac{1}{\alpha+1}}\right\}},
\end{equation}
where $\varphi_0$ is defined by \eqref{lvar}.
Notice that $w(t,x)\ge w(0,x)=u_0(x)$ since $w(x,\cdot)$ is increasing for each $x\in\mathbb{R}$.
Let us define 
\begin{equation}\label{lu0}
 x_0(t):=\sup\Big\{x\in\mathbb{R}:u_0(x)=\exp \Big(1-\big(\rho(\alpha+1)t+1\big)^{\frac{1}{\alpha+1}}\Big)\Big\}.
\end{equation}
Observe that $w(t,x_0(t))=1$ and $0<w(t,x)\le 1$ for $x\ge x_0(t)$. For any $x\ge x_0(t)$ and $t>0$, we have
\begin{equation}\label{wx}
    w_x=-\frac{w}{(1-\ln w)^\alpha}\varphi_0'(1+\varphi_0)^\alpha,
\end{equation}
and 
\begin{equation}\label{wxx}
		w_{xx}=\frac{w}{(1-\ln w)^{\alpha}}\Big\{(\varphi_0')^2 (1+\varphi_0)^{2\alpha}\Big((1-\ln w)^{-\alpha}
		+\alpha (1-\ln w)^{-(\alpha+1)}-\alpha(1+\varphi_0)^{-(\alpha+1)} \Big)-\varphi_0''(1+\varphi_0)^\alpha \Big\}.
\end{equation}
For $w_{xx}$, we have the following estimate.
\begin{lemma}\label{de} Let $u_0$ such that $\varphi_0=-\ln u_0$ satisfies $\varphi_0'=o(\varphi_0^{-\alpha}) \text{ and }\varphi_0''=o(\varphi_0') \text{ as }x\to +\infty$.
 Then,  for any small $\varepsilon>0$, there exists $t^\#>0$, depending on $\varepsilon$, such that
	\begin{equation}\label{de1}
		|w_{xx}|<\varepsilon\frac{w}{(1-\ln w)^{\alpha}}\quad \text{for any }x\ge x_0(t) \text{ and } t\ge t^\#.
	\end{equation}	
\end{lemma}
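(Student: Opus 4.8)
The plan is to argue directly from the explicit expression \eqref{wxx}. Factoring out the common prefactor $\frac{w}{(1-\ln w)^\alpha}$, proving \eqref{de1} amounts to showing that the term in braces, which I denote $B(t,x)$ and which reads, writing $\psi:=1-\ln w$,
\[ B = (\varphi_0')^2(1+\varphi_0)^{2\alpha}\left(\psi^{-\alpha}+\alpha\psi^{-(\alpha+1)}-\alpha(1+\varphi_0)^{-(\alpha+1)}\right)-\varphi_0''(1+\varphi_0)^\alpha, \]
satisfies $|B|<\varepsilon$ on the region $\{x\ge x_0(t),\ t\ge t^\#\}$. The strategy is to bound $|B|$ by a function of $x$ alone that tends to $0$ as $x\to+\infty$, and then to exploit that $x\ge x_0(t)$ forces $x$ to be large once $t$ is large.

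The structural key is the uniform sandwich $1\le\psi\le 1+\varphi_0(x)$ valid for all $x\ge x_0(t)$ and $t>0$: the left inequality is $w\le 1$ (noted right after \eqref{lu0}), while the right one follows from $\psi^{\alpha+1}=(1+\varphi_0)^{\alpha+1}-\rho(\alpha+1)t\le (1+\varphi_0)^{\alpha+1}$, read off \eqref{wxt}. In particular $\psi^{-\alpha}\le 1$ and $\psi^{-(\alpha+1)}\le 1$, and since $0\le(1+\varphi_0)^{-(\alpha+1)}\le 1$, the parenthesis in $B$ is bounded in absolute value by $1+2\alpha$ \emph{independently of $t$}. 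Thus the first summand of $B$ is controlled by $(1+2\alpha)\left(\varphi_0'(1+\varphi_0)^\alpha\right)^2$, a quantity depending on $x$ only.

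It remains to see that both summands vanish as $x\to+\infty$. Since $u_0\to 0$ at $+\infty$ (Hypothesis \ref{ic}), we have $\varphi_0\to+\infty$, whence $(1+\varphi_0)^\alpha\sim\varphi_0^\alpha$; combined with $\varphi_0'=o(\varphi_0^{-\alpha})$ this gives $\varphi_0'(1+\varphi_0)^\alpha\to 0$, so the first summand tends to $0$. For the second summand, $\varphi_0''=o(\varphi_0')$ together with the same limit yields $\varphi_0''(1+\varphi_0)^\alpha=o\!\left(\varphi_0'(1+\varphi_0)^\alpha\right)\to 0$. Hence $|B(t,x)|\le g(x)$ for some $t$-independent $g$ with $\lim_{x\to+\infty}g(x)=0$, and there is $X=X(\varepsilon)$ with $g<\varepsilon$ on $[X,+\infty)$.

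Finally I will convert spatial decay into the claimed statement. By \eqref{lu0} the level $\exp\!\left(1-(\rho(\alpha+1)t+1)^{1/(\alpha+1)}\right)$ defining $x_0(t)$ tends to $0$ as $t\to+\infty$; since $u_0$ is front-like and non-increasing near $+\infty$ with $u_0\to 0$, the largest point where $u_0$ meets this level satisfies $x_0(t)\to+\infty$. Choosing $t^\#$ so that $x_0(t)\ge X$ for $t\ge t^\#$, every $x\ge x_0(t)$ then obeys $x\ge X$, so $|B(t,x)|\le g(x)<\varepsilon$, which is exactly \eqref{de1}. I expect the only genuinely delicate point to be precisely this uniformity in $t$: the estimates must be $t$-free, and it is the uniform sandwich for $\psi$ that guarantees this, while the passage $x_0(t)\to+\infty$ transfers the $x\to+\infty$ decay to the region $t\ge t^\#$.
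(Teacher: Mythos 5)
Your proposal is correct and follows essentially the same route as the paper: factor out $\tfrac{w}{(1-\ln w)^\alpha}$, use $u_0\le w\le 1$ to get the $t$-uniform sandwich $1\le 1-\ln w\le 1+\varphi_0$ and hence a $t$-free bound on the parenthesis (the paper uses the constant $2$ where you use $1+2\alpha$), then let the hypotheses $\varphi_0'=o(\varphi_0^{-\alpha})$ and $\varphi_0''=o(\varphi_0')$ kill both summands as $x\to+\infty$, and finally transfer this to $t\ge t^\#$ via $x_0(t)\to+\infty$. The only differences are cosmetic (your explicit majorant $g(x)$ versus the paper's direct $\sqrt{\varepsilon/4}$ and $\varepsilon/2$ thresholds).
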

\begin{proof}
	
	Since $0<u_0(x)\le w(t,x)\le 1$, we have 
 \begin{equation*}
     0<(1+\varphi_0)^{-(\alpha+1)}\le (1-\ln w)^{-(\alpha+1)}\le 1.
 \end{equation*}
 It follows from $0< w\le 1$ for all $x\ge x_0(t)$ and $t>0$  that, for any $x\ge x_0(t)$ and $t> 0$, we have
	\begin{equation}\label{Cw}
		0<(1-\ln w)^{-\alpha}+\alpha(1-\ln w)^{-(\alpha+1)}-\alpha(1+\varphi_0)^{-(\alpha+1)}<2.
	\end{equation}
In view of the definition \eqref{lu0} of $x_0(t)$, since $u_0$ is nonincreasing and $\lim_{x\to +\infty} u_0=0$, we have $x_0(t)\to +\infty$ as $t\to \infty$. For any small $\varepsilon>0$, it follows from the assumption on $\varphi_0'(x)$ that there exists $t^\#>0$ such that for $x>x_0(t)$ and $t\ge t^\#$,  we have
	\begin{equation*}
		|\varphi_0'(x)(1+\varphi_0(x))^\alpha|<\sqrt{\frac{\varepsilon}{4}}.
	\end{equation*}
	On the other hand, it follows from $\varphi_0''(x)=o(\varphi_0'(x))$ that there exists $X'$ such that  for $x>x_0(t)\ge X'$ and $t\ge t^\#$, up to enlarge $t^\#$ if necessary, we have
	\begin{equation*}
		|\varphi_0''(x)(1+\varphi_0(x))^{\alpha}|<\varepsilon/2.
	\end{equation*}
	Therefore, by collecting the above estimates, we have, for any $x\ge x_0(t)$ and $t\ge t^\#$,
	\begin{equation*}
		\begin{aligned}
			|w_{xx}(t,x)|&\le \frac{w(t,x)}{(1-\ln w(t,x))^{\alpha}} \Big\{(\varphi_0'(x)(1+\varphi_0(x))^\alpha)^2\Big((1-\ln w(t,x))^{-\alpha}\\
   &+\alpha \big(1-\ln w(t,x)\big)^{-(\alpha+1)}-\alpha(1+\varphi_0(x))^{-(\alpha+1)}\Big)+|\varphi_0''(x)(1+\varphi_0(x))^{\alpha}|\Big\}\\
			&\le \frac{w(t,x)}{(1-\ln w(t,x))^{\alpha}}(\frac{\varepsilon}{4}\times 2+\frac{\varepsilon}{2})=\varepsilon\frac{w(t,x)}{(1-\ln w(t,x))^{\alpha}},
		\end{aligned}
	\end{equation*}
	which gives the estimate \eqref{de1}. This completes the proof.
 
\end{proof}
Here we present a lemma, which will play a key role in the proof of Lemma \ref{gen1}.
\begin{lemma}\label{lem22}
Let $u_0$ such that $\varphi_0=-\ln u_0$ satisfies $\varphi_0'=o(\varphi_0^{-\alpha}) \text{ and }\varphi_0''=o(\varphi_0') \text{ as }x\to +\infty$.
   Then there is $ t^1>0$ such that, for $x\ge x_0(t)$ and $t\ge t^1$, we have
\begin{equation}\label{wxwxx}
    w_x+w_{xx}\le 0.
\end{equation}  
\end{lemma}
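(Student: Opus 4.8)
The plan is to argue directly from the closed-form expressions \eqref{wx} and \eqref{wxx}. Adding $w_x$ and $w_{xx}$ and factoring out the strictly positive prefactor $w/(1-\ln w)^{\alpha}$, the desired inequality \eqref{wxwxx} is equivalent to showing that the bracket
\[
B := -\varphi_0'(1+\varphi_0)^\alpha + (\varphi_0')^2(1+\varphi_0)^{2\alpha}\Big((1-\ln w)^{-\alpha}+\alpha(1-\ln w)^{-(\alpha+1)}-\alpha(1+\varphi_0)^{-(\alpha+1)}\Big) - \varphi_0''(1+\varphi_0)^\alpha
\]
is nonpositive for $x\ge x_0(t)$ and $t$ large. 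So the whole statement reduces to a pointwise sign estimate on $B$, and the factor in the middle term is precisely the quantity already confined to $(0,2)$ by \eqref{Cw}.

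The key device is to measure all three terms against the single quantity $A := \varphi_0'(1+\varphi_0)^\alpha$. Since $u_0$ is nonincreasing on $[\xi_0,+\infty)$ we have $\varphi_0'=-u_0'/u_0\ge 0$, hence $A\ge 0$; and since $u_0\to 0$ forces $\varphi_0\to+\infty$, the hypothesis $\varphi_0'=o(\varphi_0^{-\alpha})$ together with $(1+\varphi_0)^\alpha\sim\varphi_0^\alpha$ gives $A\to 0$ as $x\to+\infty$. In terms of $A$ the first term of $B$ is exactly $-A$; the middle term is $A^2$ multiplied by a factor in $(0,2)$, hence at most $2A^2$; and the last term is controlled by $\varphi_0''=o(\varphi_0')$, so that $|\varphi_0''|\le\tfrac14\varphi_0'$ for $x$ large and therefore $|\varphi_0''|(1+\varphi_0)^\alpha\le\tfrac14 A$.

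Collecting these, for $x$ beyond some $X$ (where $A\le\tfrac18$) one gets
\[
B \le -A + 2A^2 + \tfrac14 A \le -A + \tfrac14 A + \tfrac14 A = -\tfrac12 A \le 0.
\]
It remains to make this uniform over $x\ge x_0(t)$: exactly as in Lemma \ref{de}, the monotonicity of $u_0$ and $\lim_{x\to+\infty}u_0=0$ give $x_0(t)\to+\infty$, so there is $t^1$ with $x_0(t)\ge X$ for all $t\ge t^1$; then every $x\ge x_0(t)$ lies in the region where the above estimate applies, yielding $w_x+w_{xx}\le 0$ there.

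I do not expect a serious obstacle here; the two delicate points are recognizing that the sign of $B$ is governed entirely by the monotonicity-induced positivity $A\ge 0$ of the leading term $-A$ — against which the quadratic term $A^2$ and the $\varphi_0''$ term are genuinely higher order — and transferring the pointwise asymptotics ``as $x\to+\infty$'' into an estimate uniform in $x\ge x_0(t)$ via $x_0(t)\to+\infty$. The nonincreasing hypothesis on $u_0$ is precisely what makes $-A$ the dominant, negative term; without it the sign of $B$ could fail.
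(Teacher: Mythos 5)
Your proof is correct and follows essentially the same route as the paper: both arguments factor out the positive prefactor, measure everything against $A=\varphi_0'(1+\varphi_0)^\alpha$, use \eqref{Cw} to bound the quadratic term by $2A^2$, absorb it and the $\varphi_0''$ term into the negative leading term $-A$ using the smallness hypotheses, and transfer the large-$x$ estimates to $x\ge x_0(t)$ via $x_0(t)\to+\infty$. The only difference is the choice of absorption constants ($A\le\tfrac18$, $|\varphi_0''|\le\tfrac14\varphi_0'$ versus the paper's $A\le\tfrac14$, $|\varphi_0''|\le\tfrac12\varphi_0'$), which is immaterial.
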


\begin{proof}
By the assumptions $\varphi_0'(x)=o(\varphi_0^{-\alpha}(x))$ and $\varphi_0''(x)=o(\varphi_0'(x))$, there exists $X_0$ such that for all $x\ge X_0$, we have
\begin{equation*}
    \varphi_0'(1+\varphi_0)^\alpha\le \frac{1}{4} \text{ and }|\varphi_0''|\le \frac{1}{2}\varphi_0'.
\end{equation*}
Since $x_0(t)\to +\infty$ as $t\to \infty$, there is $t^1>0$ such that $x_0(t)>X_0$ for all $t\ge t^1$. In view of the definition \eqref{lvar} of $\varphi$, since $u_0$ is a nonincreasing function, then $\varphi'\ge 0$.
It then follows from  \eqref{wx}, \eqref{wxx} and \eqref{Cw}  that we have, for all $x\ge x_0(t)$ and $t\ge t^1$,

\begin{align*}
w_x+w_{xx} &\le\frac{w}{(1-\ln w)^\alpha}  (1+\varphi_0)^\alpha\Big(\varphi_0'(-1+2\varphi_0' (1+\varphi_0)^{\alpha}) +|\varphi_0''|\Big)\\
  &\le \frac{w}{(1-\ln w)^\alpha}  (1+\varphi_0)^\alpha\Big(-\frac{1}{2}\varphi_0'+\frac{1}{2}\varphi_0'\Big)=0.
  \end{align*}
This completes the proof.
\end{proof}

\subsection{The upper bound}
In this subsection, we prove the upper bound of the level sets in Lemma \ref{gen} by constructing an accurate supersolution.
\par 
We define
\begin{equation*}
	m(t,x)=
 \left\{
 \begin{aligned}
     &w(t+t^\#,x),  &x\ge x_0(t+t^\#),\\
     &1,& x<x_0(t+t^\#),
 \end{aligned}\right.
\end{equation*}
 where $t^\#$ is defined in Lemma \ref{de}. Observe that $m(t,x)$ is well defined for all $t\ge 0$ and all $x\in \mathbb{R}$, and $0<m(t,x)\le 1$.
\par
Let $\varepsilon>0$ be given and define 
\begin{equation}\label{rhou}
	 \rho=r+\frac{\varepsilon}{2}.
\end{equation}
Now, we prove that $m$ is a supersolution of equation \eqref{oeq1}.
\begin{lemma}\label{sus}
Let $u_0$ such that $\varphi_0=-\ln u_0$ satisfies $\varphi_0'=o(\varphi_0^{-\alpha}) \text{ and }\varphi_0''=o(\varphi_0') \text{ as }x\to +\infty$.
  Then $m(t,x)$ is a supersolution to equation \eqref{oeq1}  for all $t>0$ and $x\in\mathbb{R}$.
\end{lemma}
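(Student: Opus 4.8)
The plan is to show that the piecewise-defined function $m(t,x)$ is a supersolution, i.e. that it satisfies $m_t - m_{xx} - f(m) \ge 0$ in the appropriate weak sense on all of $\mathbb{R}$. The function is built from the ODE solution $w$ (shifted in time by $t^\#$) glued at $1$ along the moving interface $x = x_0(t+t^\#)$, so I would first dispose of the easy region $x < x_0(t+t^\#)$, where $m \equiv 1$: there $m_t = m_{xx} = 0$ and $f(m) = f(1) = 0$, so the supersolution inequality holds with equality. The substance is the region $x > x_0(t+t^\#)$, where $m = w(t+t^\#, x)$ and $0 < w \le 1$.

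On that region I would compute the defect $m_t - m_{xx} - f(m)$ directly. By construction $w$ solves the ODE \eqref{ode} with $\rho = r + \tfrac{\varepsilon}{2}$, so $w_t = \rho\, w/(1-\ln w)^\alpha$. The Laplacian term is controlled by Lemma \ref{de}: for $t + t^\# \ge t^\#$ (which holds for all $t \ge 0$) and $x \ge x_0(t+t^\#)$ we have $|w_{xx}| < \varepsilon\, w/(1-\ln w)^\alpha$, with the $\varepsilon$ here chosen to match the given $\varepsilon$ up to a harmless constant factor (one takes the $\varepsilon$ in Lemma \ref{de} to be, say, $\varepsilon/2$). Finally the reaction term is bounded above using \eqref{fu1}: since $0 < w \le 1$,
\begin{equation*}
  f(w) \le r\,\frac{w}{(1+|\ln w|)^\alpha} = r\,\frac{w}{(1-\ln w)^\alpha},
\end{equation*}
where I have used $|\ln w| = -\ln w$ because $w \le 1$. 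Assembling these three estimates,
\begin{equation*}
  m_t - m_{xx} - f(m) \ge \frac{w}{(1-\ln w)^\alpha}\Bigl(\rho - \varepsilon - r\Bigr)
  = \frac{w}{(1-\ln w)^\alpha}\Bigl(\tfrac{\varepsilon}{2} - \varepsilon\Bigr),
\end{equation*}
which forces me to be careful about the bookkeeping of constants: the clean way is to apply Lemma \ref{de} with tolerance $\varepsilon/2$ so that the bracket becomes $\rho - \tfrac{\varepsilon}{2} - r = 0$, giving nonnegativity. Thus the defect is $\ge 0$ on the interior of each region.

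What remains, and what I expect to be the genuine obstacle, is the gluing along the free boundary $x = x_0(t+t^\#)$, where $m$ switches from the smooth profile $w$ to the constant $1$. There $m$ is only Lipschitz, not $C^1$, so the supersolution inequality must be read in the viscosity or distributional sense. The key point is that the two pieces match continuously (by definition $w(t+t^\#, x_0(t+t^\#)) = 1$), and one needs the jump in the spatial derivative across the interface to have the correct sign so that $m_{xx}$ picks up a nonnegative Dirac mass — equivalently, that $m$ has a concave kink there. Concretely, $w_x \le 0$ on $\{w < 1\}$ while the constant piece has zero slope, so the outward-pointing kink is of the right (concave-down) type and the singular part of $-m_{xx}$ is a nonnegative measure, which only helps the inequality. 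I would make this rigorous by the standard argument that a continuous function which is a classical supersolution on each side of a $C^1$ interface, and whose derivative jumps downward as $x$ increases across the interface, is a supersolution in the viscosity sense; alternatively one can verify directly that no test function can touch $m$ from below at an interface point with a violating second-order jet. Once the gluing is handled, combining it with the interior estimates completes the proof that $m$ is a supersolution on all of $\mathbb{R}$ for every $t > 0$.
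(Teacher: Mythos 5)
Your proof is correct and follows essentially the same route as the paper: split at the interface $x=x_0(t+t^\#)$, use $m_t=\rho\,w/(1-\ln w)^\alpha$ with $\rho=r+\tfrac{\varepsilon}{2}$, control $|w_{xx}|$ by Lemma \ref{de} with tolerance $\tfrac{\varepsilon}{2}$, and bound $f(w)$ by \eqref{fu1}. Your additional discussion of the concave kink at the gluing interface (the spatial derivative jumping from $0$ down to $w_x\le 0$, so the singular part of $-m_{xx}$ is a nonnegative measure) is a point the paper's written proof passes over silently, and it is handled correctly.
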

\begin{proof}
To prove $m$ is a supersolution, we need to check that $m_t-m_{xx}-f(m)\ge 0$ for all $t>0$ and $x\in\mathbb{R}$.
For $x<x_0(t+t^\#)$ and $t>0$, since $m_t=m_{xx}=f(m)=0$,  we have 
\begin{equation*}
    m_t(t,x)-m_{xx}(t,x)-f(m(t,x))=0.
\end{equation*}
On the other hand, for all $x\ge x_0(t+t^\#)$ and $t>0$, by the definitions of $m$ and $w$, we have
\begin{equation*}
    m_t(t,x)= w_t(t+t^\#,x)= \rho \frac{w(t+t^\#,x)}{(1-\ln w(t+t^\#,x))^{\alpha}}=\Big(r+\frac{\varepsilon}{2}\Big) \frac{w(t+t^\#,x)}{(1-\ln w(t+t^\#,x))^{\alpha}}.
\end{equation*}
Thus, by Lemma \ref{de} and Hypothesis \ref{fu}, for all $x\ge x_0(t+t^\#)> x_0(t)$ and $t>0$, we obtain  
\begin{align*}
    m_t(t,x)-m_{xx}(t,x)&-f(m(t,x))= w_t(t+t^\#,x)-w_{xx}(t+t^\#,x)-f(w(t+t^\#,x))\\
   & \ge\Big(r+\frac{\varepsilon}{2}\Big) \frac{w(t+t^\#,x)}{(1-\ln w(t+t^\#,x))^{\alpha}}- \frac{\epsilon}{2}\frac{w(t+t^\#,x)}{(1-\ln w(t+t^\#,x))^{\alpha}}-r\frac{w(t+t^\#,x)}{(1-\ln w(t+t^\#,x))^\alpha}= 0.
\end{align*}
This completes the proof.
\end{proof}
In view of the definition of $m$, for $x<x_0(t+t^\#)$, since $u_0\le 1$,  we have $m(0,x)=1\ge u_0(x)$.
For $x\ge x_0(t+t^\#)$,  since $w(\cdot,x)$ is nondecreasing for each $x\in\mathbb{R}$, we have $m(0,x)=w(t^\#,x)\ge w(0,x)= u_0(x)$. Thus, $m(0,x)\ge u_0(x)=u(0,x)$ for all $x\in\mathbb{R}$.
Equipped with Lemma \ref{sus}, it then follows from the comparison principle that 
\begin{equation}\label{m}
	m(t,x)\ge u(t,x) \quad \text{for all }t>0 \text{ and }x\in\mathbb{R}.
\end{equation} 
Now, we prove the upper bound in Lemma \ref{gen}. 
\begin{proof}[The proof of the upper bound]
	We need to prove that, for any $\lambda\in (0,1)$ and large time $t$, we have 
	\begin{equation}\label{upperb}
		E_\lambda(t)\subset u^{-1}_0\left\{\left[e^{-[(r+\varepsilon)(\alpha+1)t]^{\frac{1}{\alpha+1}}},1\right]\right\}.
	\end{equation}
	By \eqref{m} and the definition of  $m$, we have 
	\begin{equation*}
		u(t,x)\le m(t,x)\le w(t+t^\#,x) \quad \text{for all }t>0 \text{ and }x\in\mathbb{R}.
	\end{equation*}
	Let us pick a $y\in E_\lambda(t)$, then $w(t+t^\#,y)\ge\lambda$.
	It follows that,
 by the definitions of $w$ and $\varphi_0$, 
    we have 
    \begin{equation*}
        w(t+t^\#,y)= \exp \Big\{1-[(1-\ln u_0(y))^{\alpha+1}-\rho(\alpha+1)(t+t^\#)]^{\frac{1}{\alpha+1}} \Big\}\ge\lambda,
    \end{equation*}
    whence 
    \begin{equation*}
        u_0(y)\ge \exp\Big\{1-\Big[\rho(\alpha+1)(t+t^\#)+(1-\ln \lambda)^{\alpha+1}\Big]^{\frac{1}{\alpha+1}}\Big\}.
    \end{equation*}

 Since $\rho= r+\frac{\varepsilon}{2}$, there is a time $\bar t_{\lambda,\varepsilon}> 0$ such that
	\begin{equation}\label{u1}
		u_0(y)\ge e^{-[(r+\varepsilon)(\alpha+1)t]^{\frac{1}{\alpha+1}}}\quad \text{for any }t\ge \bar t_{\lambda,\varepsilon},
	\end{equation}
	which gives \eqref{upperb}. This completes the proof.
\end{proof}

\subsection{The lower bound}\label{lalb}
In this subsection, we explore the lower bound of the level sets of a solution to \eqref{oeq1} by constructing an adequate subsolution.\par
Let $\varepsilon>0$ be given. We take
\begin{equation}\label{rho}
	\max\Big\{r-\frac{\varepsilon}{2},\frac{3}{4}r\Big\}<\rho<r. 
\end{equation}
Let us define the function $g(y):=y(1-My)$ with $M>0$. Notice that
\begin{equation*}
    0\le g(y)\le g\Big(\frac{1}{2M}\Big)=\frac{1}{4M}, \quad \forall y\in \left[0, \frac{1}{2M}\right].
\end{equation*}
We define 
\begin{equation*}
    x_M(t):=\sup \Big\{x\in\mathbb{R}:u_0(x)=\exp\big\{1-\big((1+\ln\left( 2M\right))^{\alpha+1}+\rho(\alpha+1)t\big)^{\frac{1}{\alpha+1}} \big\}\Big\}\ge x_0(t),
\end{equation*}
where $x_0(t)$ is defined by \eqref{lu0}.
Observe that $w(t,x_M(t))=\frac{1}{2M}$ and $w(t,x)<\frac{1}{2M}$ for all $x>x_M(t)$.
Let us define
\begin{equation*}
	\zeta:=\inf_{x\in(-\infty,\xi_1)}u_0(x),
\end{equation*}
where $\xi_1:=\max\{\xi_0, x_0(0)\}$.
  Notice that $\zeta\in(0,1]$ according to Hypothesis \ref{ic}, and that $u_0$ is non-increasing on $[\xi_1,+\infty)$. 
We select large enough $M>0$ so that
\begin{equation*}
      M\ge M_0:=\max\left\{\frac{1}{2\zeta},\frac{1}{4s_0}\right\}.
  \end{equation*}
Then, by $u_0(x_M(0))=\frac{1}{2M}<\zeta\le u_0(\xi_1)$, we have $x_M(0)> \xi_1$.

Let us define
\begin{equation}\label{lvxt}
v(t,x):=
\left\{
\begin{aligned}
    &\frac{1}{4M}, &x\le x_M(t), \\
    &g(w(t,x)), &x>x_M(t).
\end{aligned}\right.
\end{equation}
Since $M\ge \frac{1}{4s_0}$, then we have $0<v(t,x)\le s_0$ for all $t\ge 0$ and $x\in\mathbb{R}$.
\begin{lemma}
Let $u_0$ such that $\varphi_0=-\ln u_0$ satisfies $\varphi_0'=o(\varphi_0^{-\alpha}) \text{ and }\varphi_0''=o(\varphi_0') \text{ as }x\to +\infty$. Then there exists large enough $M>0$ such that $v(t,x)$ is a subsolution to equation \eqref{oeq1} for all $t>0$ and $x\in\mathbb{R}$.
\end{lemma}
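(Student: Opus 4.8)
The plan is to verify the differential inequality $v_t-v_{xx}-f(v)\le 0$ piece by piece, using three ingredients from the construction: the ODE relation $w_t=\rho\,w/(1-\ln w)^\alpha$, the derivative formulas \eqref{wx} and \eqref{wxx}, and the smallness estimate \eqref{de1} for $w_{xx}$. The essential point will be that the strict gap $\rho<r$ from \eqref{rho} produces a fixed margin that swallows every error term once $M$ is taken large; note first that $v\le\tfrac1{4M}\le s_0$ everywhere, so the sharp lower bound \eqref{fu2} for $f$ is legitimately available.

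I would first dispose of the region $x<x_M(t)$, where $v\equiv\tfrac1{4M}$ so the inequality reduces to $-f(\tfrac1{4M})\le0$, true since $f>0$ on $(0,1)$. At the free boundary $x=x_M(t)$ I would observe that $v$ is actually $C^1$ in $(t,x)$: the two definitions agree because $g(\tfrac1{2M})=\tfrac1{4M}$, and since $g'(\tfrac1{2M})=1-2M\cdot\tfrac1{2M}=0$, both $v_x=g'(w)w_x$ and $v_t=g'(w)w_t$ vanish as $x\downarrow x_M(t)$, matching the zero derivatives of the constant piece. Hence no singular (Dirac) mass appears in $v_t$ or $v_{xx}$; being $C^1$ and piecewise $C^2$, $v$ is a genuine (generalized) subsolution as soon as the inequality holds classically on the open set $x>x_M(t)$, and the comparison principle then applies. (The initial inequality $v(0,\cdot)\le u_0$ is separately immediate from $g(y)\le y$ and from $u_0\ge\tfrac1{2M}$ on $(-\infty,x_M(0)]$.)

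The heart of the matter is the region $x>x_M(t)$, where $v=g(w)=w(1-Mw)$ with $0<w<\tfrac1{2M}$. Writing $\theta:=Mw\in(0,\tfrac12)$ and $L:=1-\ln w$, the chain rule and the ODE give
\begin{align*}
v_t-v_{xx}-f(v)=(1-2\theta)\,\rho\,\frac{w}{L^\alpha}+2M(w_x)^2-(1-2\theta)\,w_{xx}-f\big(g(w)\big).
\end{align*}
I would bound the two error terms using the decay hypotheses: from \eqref{wx}, $2M(w_x)^2=2\theta\,(\varphi_0'(1+\varphi_0)^\alpha)^2L^{-\alpha}\,\tfrac{w}{L^\alpha}\le(\varphi_0'(1+\varphi_0)^\alpha)^2\,\tfrac{w}{L^\alpha}$, and by \eqref{de1}, $|(1-2\theta)w_{xx}|\le|w_{xx}|\le\delta\,\tfrac{w}{L^\alpha}$, so both are $\le\delta\,\tfrac{w}{L^\alpha}$ once $x$ is large. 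For the reaction term I would invoke \eqref{fu2} (legitimate as $g(w)\le s_0$) together with the log--comparability $1-\ln g(w)=L-\ln(1-Mw)\le L+\ln2$, valid because $1-Mw\ge\tfrac12$, to get
\begin{align*}
f\big(g(w)\big)\ge r\,(1-\theta)\,(1-Kg(w))\Big(\tfrac{L}{L+\ln2}\Big)^{\alpha}\frac{w}{L^\alpha}.
\end{align*}
Dividing by $w/L^\alpha>0$, the whole inequality collapses to the scalar statement $(1-2\theta)\rho+2\delta\le r\,(1-\theta)\,B$ for all $\theta\in(0,\tfrac12)$, with $B:=(1-Kg(w))\big(L/(L+\ln2)\big)^{\alpha}$. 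Since $\theta\mapsto r(1-\theta)B-(1-2\theta)\rho$ is affine, its minimum over $[0,\tfrac12]$ is attained at an endpoint and equals $\min\{rB-\rho,\;rB/2\}$, which converges to $\min\{r-\rho,\;r/2\}>0$ as $B\to1$; I would close the estimate by taking $M$ large, so that $B\to1$ (because $g(w)\le\tfrac1{4M}\to0$ and $L\ge1+\ln(2M)\to\infty$) and $\delta$ is small.

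The main obstacle I anticipate is \emph{uniformity in $t$}: the bounds on $w_{xx}$ and $(w_x)^2$ are only useful for $x$ large, yet the lemma demands the inequality for \emph{every} $t>0$. The resolution is that taking $M$ large drives $x_M(0)\to+\infty$, and since $x\ge x_M(t)\ge x_M(0)$ throughout the region, the decay hypotheses $\varphi_0'(1+\varphi_0)^\alpha\to0$ and $\varphi_0''(1+\varphi_0)^\alpha\to0$ deliver the required smallness simultaneously for all $t>0$, rather than only for large $t$. A secondary delicate point is the bookkeeping that the genuine gap $r-\rho>0$ survives all the multiplicative corrections $(1-2\theta)$, $(1-\theta)$, $(1-Kg(w))$ and the logarithmic ratio; the degeneracy of the nonlinearity makes the comparability of $(1-\ln w)^\alpha$ with $(1+|\ln g(w)|)^\alpha$, secured by $1-Mw\ge\tfrac12$, indispensable.
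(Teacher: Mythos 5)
Your proof is correct and follows essentially the same route as the paper's: the same splitting into $x\le x_M(t)$ and $x>x_M(t)$, the lower bound \eqref{fu2} applied to $g(w)$ with the logarithmic comparison $1-\ln g(w)\le (1-\ln w)+\ln 2$, the gap $r-\rho>0$ absorbing the $w_{xx}$ and $(w_x)^2$ errors, and largeness of $M$ (hence of $x_M(0)$) supplying both the uniformity in $t$ and the smallness of the $O(Mw)$ corrections. Your endpoint analysis of the affine function of $\theta=Mw$ and the explicit $C^1$ matching at the free boundary are minor streamlinings of the paper's coefficient bookkeeping (which instead uses $\rho>\tfrac34 r$ and the elementary inequalities $(1+y)^{-\alpha}\ge 1-\alpha y$, $\ln(1-y)\ge-\tilde c y$), not a different method.
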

\begin{proof}

\par In view of the definition \eqref{lvxt} of $v$, we obtain
\begin{equation*}
    v_t(t,x)=\rho \frac{w(t,x)}{(1-\ln w(t,x))^\alpha}\Big(1-2Mw(t,x)\Big)^+.
\end{equation*}
It then follows that
\begin{equation}\label{llmt}
v_t(t,x)\le \left\{
\begin{aligned}
    &0,& x\le x_M(t),\\
    &\rho \frac{w(t,x)}{(1-\ln w(t,x))^\alpha}\Big(1-2Mw(t,x)\Big), &x>x_M(t).
\end{aligned}\right.
\end{equation}
\par Since $\frac{1}{(1+y)^\alpha}\ge 1-\alpha y$ for any $y\ge0$, then, for any $x>x_M(t)$, we have
\begin{equation*}
    1\ge \Big(\frac{1-\ln w}{1-\ln v}\Big)^\alpha=\Big(\frac{1}{1-\frac{\ln(1-Mw)}{1-\ln w}}\Big)^\alpha\ge 1+\alpha \frac{\ln(1-Mw)}{1-\ln w}.
\end{equation*}
It follows from the inequality $\ln(1-y)\ge - \tilde c y$  for any $y\in (0,\frac{1}{2})$, where $\tilde c= 2\ln 2$, that, for any $x>x_M(t)$,  we have
\begin{equation*}
    1\ge \Big(\frac{1-\ln w}{1-\ln v}\Big)^\alpha\ge 1-\alpha \tilde c  M\frac{w}{1-\ln w},
\end{equation*}
from $0<Mw<\frac{1}{2}$.
Thus, since $f(s)\ge r \frac{s}{(1-\ln s)^\alpha}(1-Ks)$ for $s\in(0,s_0]$ and $0<v(t,x)\le s_0$ for all $t\ge 0$ and $x\in\mathbb{R}$, when $x> x_M(t)$, we have
\begin{equation}\label{llfu1}
\begin{aligned}
    f(v(t,x))&\ge r \frac{v(t,x)}{(1- \ln v(t,x))^\alpha}(1-Kv(t,x))\\
    &\ge r\frac{w(t,x)(1-M w(t,x))}{(1-\ln w(t,x))^\alpha}\Big(\frac{1-\ln w(t,x)}{1-\ln v(t,x)}\Big)^\alpha\big(1- Kw(t,x)  \big)\\
    &\ge r \frac{w(t,x)}{(1-\ln w(t,x))^\alpha}\Big(1-(M+K)w(t,x)-\alpha \tilde c M\frac{w(t,x)}{1-\ln w(t,x)}-\alpha \tilde c KM^2 \frac{w^{3}(t,x)}{1-\ln w(t,x)}\Big).
    \end{aligned}
\end{equation}
On the other hand, when $x\le x_M(t)$, we have
\begin{equation}\label{llfu2}
	f(v(t,x))\ge 0,
\end{equation}
thanks to $v\in(0,1)$.

  \par Now, let us estimate the value $v_{xx}$.  In view of the definition \eqref{lvxt} of $v$, we have $v_{xx}(t,x)=0$ for $x\le x_M(t)$ and 
  \begin{equation*}
      v_{xx}(t,x)=\left(1-2M w(t,x)\right)w_{xx}(t,x)-2 M  w_x^2(t,x) \quad \text{for }x >x_M(t).
  \end{equation*}
In view of \eqref{wx} and \eqref{wxx}, we get 
\begin{equation*}
    v_{xx}(t,x)\ge -\varphi_0''(x)(1+\varphi_0(x))^\alpha (1-2M w(t,x))
\frac{w(t,x)}{(1-\ln w(t,x))^\alpha}-2M \left(\varphi'_0(x)(1+\varphi_0(x))^\alpha\right)^2\frac{w^2(t,x)}{(1-\ln w(t,x))^{2\alpha}}.
\end{equation*}
Since $\varphi''_0(x)=o(\varphi'(x))$ and  $\varphi_0'(x)=o(\varphi_0^{-\alpha}(x))$ as $x\to +\infty$,  there exists $X_1>\xi_0$ such that 
\begin{equation}
    \varphi_0''(x)(1+\varphi_0(x))^\alpha\le \frac{r-\rho}{4}\quad \text{and}\quad\varphi'_0(x)(1+\varphi_0(x))^\alpha\le \frac{\sqrt{r-\rho}}{2},
\end{equation}
 as $x\to +\infty$. In view of the definition of $x_M$, we take $M$ large enough, say $M>M_1\ge M_0$, such that $x_M(t)>X_1$ for all $t>0$. Thus, for $x>x_M(t)$, we have
\begin{equation}\label{llmxx}
		\begin{aligned}
			v_{xx}&\ge-\frac{r-\rho}{4} \frac{w}{(1-\ln w)^{\alpha}}-\frac{r-\rho}{2} M\frac{w^{2}}{(1-\ln w)^{2\alpha}}\\
   &\ge -\frac{r-\rho}{2} \frac{w}{(1-\ln w)^{\alpha}},
		\end{aligned}
	\end{equation}
 thanks to $0<w\le\frac{1}{2M}$.
 \par
Collecting \eqref{llmt}, \eqref{llfu1}, \eqref{llfu2} and \eqref{llmxx}, for $x\le x_M(t)$, we obtain
\begin{equation}\label{laamtmxxfm1}
\begin{aligned}
    (v_t-v_{xx}-f(v))(t,x)\le 0,
    \end{aligned}
\end{equation}
whereas, for $x>x_M(t)$,
\begin{equation*}
\begin{aligned}
    (v_t-v_{xx}-f(v))(t,x)\le&\frac{w(t,x)}{(1-\ln w(t,x))^\alpha}\Big(-\frac{1}{2}(r-\rho)
    +\Big((-2\rho+r)M+rK\Big)w(t,x)\\
    &+r\alpha \tilde c  M\frac{w(t,x)}{1-\ln w(t,x)}+\alpha \tilde c K M^2 \frac{w^{3}(t,x)}{1-\ln w(t,x)}\Big).
    \end{aligned}
\end{equation*}
For $0<w\le \frac{1}{2M}$, when $M\ge M_2:=\max\left\{ \frac{1}{2}\exp\left(2\alpha \tilde c(1+\frac{1}{r})-1\right),\frac{K}{4}\right\}$ , we have
\begin{equation*}
    \frac{r+KMw^2}{1-\ln w}\le \frac{r+1}{1+\ln(2M)}\le \frac{r}{2\alpha \tilde c}.
\end{equation*}
Thus, for any $x>x_M(t)$, we take $M$ large enough, say
\begin{equation*}
    M\ge \tilde M:=\max\Big\{\frac{rK}{2\rho-\frac{3}{2}r},M_1,M_2\Big\},
\end{equation*}
so that 
\begin{equation*}
    r\alpha \tilde c  M\frac{w(t,x)}{1-\ln w(t,x)}+\alpha \tilde c K M^2 \frac{w^{3}(t,x)}{1-\ln w(t,x)}=\alpha \tilde c  M\frac{r+KMw^2(t,x)}{1-\ln w(t,x)} w(t,x)\le \frac{1}{2}rMw (t,x),
\end{equation*}
whence
\begin{equation}\label{laamtmxxfm2}
    (v_t-v_{xx}-f(v))(t,x)
    \le\frac{w^{2}(t,x)}{(1-\ln w(t,x))^\alpha}\Big((-2\rho+\frac{3}{2}r)M+rK\Big)\le 0,
\end{equation}
thanks to $\rho>\frac{3}{4}r$.
This completes the construction of the subsolution $ v(t,x)$.
\end{proof}
 In view of \eqref{lvxt}, we notice that:
\begin{itemize}
    \item when $x>x_M(0)$, we have $v(0,x)=g(w(0,x))\le w(0,x)=u_0(x)$;
    \item when $\xi_1\le x\le x_M(0)$, since $u_0$ is nonincreasing on $[\xi_1,+\infty)$, we have $v(0,x)=\frac{1}{4M}<\frac{1}{2M}\le u_0(x)$;
    \item when $x< \xi_1$, we have $v(0,x)=\frac{1}{4M}<\frac{1}{2M}<\zeta\le u_0(x)$.
\end{itemize}
 Thus, we obtain $v(0,x)\le u_0(x)=u(0,x)$ for all $x\in\mathbb{R}$.
As a consequence, the maximum principle yields
\begin{equation}\label{v}
	v(t,x)\le u(t,x)  \quad \text{for all }t\ge 0 \text{ and }x\in\mathbb{R}. 
\end{equation}
Now, we prove the lower bound in Lemma \ref{ei}. 
\begin{proof}[The proof of the lower bound]
	\par
	Firstly, we prove it for small $\theta$. Let us fix 
	\begin{equation*}
		0<\theta <\frac{1}{4M}.
	\end{equation*}
	We define the level set of $w(t,x)$ as 
	\begin{equation*}
		F_\theta(t):=\{x\in\mathbb{R},\ w(t,x)=\theta\}.
	\end{equation*}
	 Recall that $E_\theta(t)$ is not empty for $t>t_\theta$ 
  . It follows from Hypothesis \ref{ic} that there exists a time $t'_\theta>t_\theta$ 
  such that, for any $t>t'_\theta$, the closed set $F_\theta(t)$  is nonempty. For any $t\ge t'_\theta$, denote
	\begin{equation*}
		y_\theta(t):=\min F_\theta(t).
	\end{equation*}
	Then the function $y_\theta: [t'_\theta,+\infty)\to \mathbb{R}$ is nondecreasing and left-continuous. In addition, since $u_0$ is nonincreasing, for all points $t\ge t'_\theta$ where the function $y_\theta$ is discontinuous, there exist $a<b$ such that 
 \begin{equation*}
     u_0=\exp\Big\{1-\left[\rho(\alpha+1)t+(1-\ln \theta)^{\alpha+1}\right]^{\frac{1}{\alpha+1}}\Big\} \quad\text{on }[a,b];
 \end{equation*}
if $[a,b]$ denotes the largest such interval, then $a=y_\theta(t)$ and $b=y_\theta(t^+)=\lim_{s\to t, s>t}y_\theta(s)$.
\par
	We claim that 
	\begin{equation*}
		\inf_{\bar\Omega}u>0,
	\end{equation*}
	where $\Omega$ is an open set defined by
	\begin{equation*}
		\Omega:=\left\{(t,x), t>0,x<y_\theta(t)\right\}.
	\end{equation*}
	Let us evaluate $u(t,x)$ on the boundary $\partial \Omega$. 
	$\partial \Omega$ consists in two parts: 
	\begin{itemize}
		\item[(1)]  $\{ t'_\theta\}\times (-\infty, y_\theta( t_\theta'^+)]$;
		\item[(2)] $\big\{(t,x)|t>t'_\theta\quad \text{and} \quad x\in[y_\theta(t),y_\theta(t^+)]\big\}$.
	\end{itemize}
	For the first part, $u(t'_\theta,\cdot)$ is continuous, positive, and $\liminf_{x\to -\infty}u(t'_\theta, x)>0$. 
 Thus, we have
	\begin{equation*}
		\inf_{x\in(-\infty, y_\theta(t_\theta'^+)]}u(t'_\theta,x)>0.
	\end{equation*}
	For the second part, if $t>t'_\theta$ and $x\in[y_\theta(t),y_\theta(t^+)]$, then $w(t,x)=\theta$, whence 
	\begin{equation*}
		u(t,x)\ge \theta-M\theta^{2}>0.
	\end{equation*}
 
	As a consequence, $\Theta:=\inf_{\partial\Omega} u>0$. Since $\Theta>0$ is a subsolution of equation \eqref{oeq1}, the comparison principle yields
	\begin{equation}\label{Theta}
		u(t,x)\ge\Theta\quad \text {for all }  x\in \bar\Omega.
	\end{equation}
	Let us pick any $x\in E_\lambda(t)$ for any $\lambda\in(0,\Theta)$. Then 
	\begin{equation*}
		x>y_\theta(t^+)\ge y_\theta(t) \quad\text{for any } t\ge t'_\theta.
	\end{equation*}  
	Since $\rho>r-\frac{\varepsilon}{2}$, then there exists a time $\underline t'_{\lambda,\varepsilon}>t'_\theta$ such that
	\begin{equation}\label{l1}
		u_0(x)\le u_0(y_\theta(t))=\exp\left\{1-\left[\rho(\alpha+1)t+(1-\ln \theta)^{\alpha+1}\right]^{\frac{1}{\alpha+1}}\right\}\le e^{-[(r-\varepsilon)(\alpha+1)t]^{\frac{1}{\alpha+1}}}
	\end{equation}
	for any $t>\underline t'_{\lambda,\varepsilon}$, which gives the lower bound for small $\lambda$.
	\par  Let us prove the lower bound for any $\lambda\in(0,1)$. Let $\lambda\in [\Theta,1)$ be given. Denote by $u_\theta$  the solution to \ref{oeq1} with initial data
	\begin{equation}\label{utheta}
		u_{\theta,0}:=\left\{
		\begin{aligned}
			&\Theta, & x\ge -1,\\
			&-\Theta x, & -1<x<0,\\
			&0,&x\ge 0.
		\end{aligned}
		\right.
	\end{equation}
In view of \eqref{uto1}, we can also obtain that, for some $\gamma_1>0$, we have $\lim_{t\to \infty}\inf_{x\le \gamma_1 t}u_\theta(t,x)=1$.
	\par 
	There exists a time $\underline t''_\lambda>0$ such that 
	\begin{equation}\label{t''}
		u_\theta( \underline t''_\lambda,x)>\lambda \quad \text{for all }x\le0.
	\end{equation}
	Furthermore, by \eqref{Theta} and \eqref{utheta}, we have
	\begin{equation*}
		u(t,x)\ge u_{\theta,0}(x-y_\theta(T)) \quad \text{for any }x\in\mathbb{R} \text{ and } T\ge0.
	\end{equation*} 
	It follows from the comparison principle that 
	\begin{equation*}
		u(T+t,x)\ge u_\theta(t,x-y_\theta(T)).
	\end{equation*}
	By \eqref{t''}, we obtain
	\begin{equation*}
		u(T+\underline t''_\lambda,x)>\lambda, \quad \text{for all } x\le y_\theta(T)\text{ and } T\ge0.
	\end{equation*}
	Therefore there exists a time $ \underline t_{\lambda,\varepsilon}>\max(\underline t'_{\lambda,\varepsilon},\underline t''_\lambda)$ such that
	\begin{equation}\label{l2}
        \begin{aligned}
		u_0(x_\lambda(t))&<u_0(y_\theta(t-\underline t''_\lambda))\\
  &=\exp\left\{1-\left[\rho(\alpha+1)(t-\underline t''_\lambda)+(1-\ln \lambda)^{\alpha+1}\right]^{\frac{1}{\alpha+1}}\right\}\\
  &< e^{-[(r-\varepsilon)(\alpha+1)t]^{\frac{1}{\alpha+1}}}
        \end{aligned}
	\end{equation}
	for any $t>\underline t_{\lambda,\varepsilon}$, which gives the lower bound. This completes the proof.
\end{proof}
Let $T_{\lambda,\varepsilon}=\max\{\bar t_{\lambda,\varepsilon}, \underline t_{\lambda,\varepsilon}\}$. Thus, combining \eqref{u1}, \eqref{l1} and \eqref{l2}, the proof of Lemma \ref{gen} is complete.

\section{A more precise bound: Proof of Lemma \ref{gen1}} \label{s4}
In this section, we give a more precise bound for the level sets of the solution to the equation \eqref{oeq1}.  
\subsection{The upper bound} 
We derive a more precise upper bound by translating the spatial variables in $w$, so that the supersolution can approximate the solution to \eqref{oeq1} more accurately.
\par
Let us define
\begin{equation*}
    m(t,x)= \left\{\begin{aligned}
    &w(t+t^1,x-t),  &x\ge x_0(t+t^1)+t,\\
    &1, &x<x_0(t+t^1)+t,
    \end{aligned}\right.
\end{equation*}
where $w(t,x)$ is defined by \eqref{wxt} with $\rho=r$ and $t^1$ is from \eqref{wxwxx}.
\par
We claim that $m$ is a supersolution to equation \ref{oeq1} for any $t>0$ and $x\in\mathbb{R}$.
\par
To prove $m$ is a supersolution, we need to check that $m_t-m_{xx}-f(m)\ge0$ for all $t>0$ and $x\in\mathbb{R}$.  For $x<x_0(t+t^1)+t$ and $t>0$, since $m_t(t,x)=m_{xx}(t,x)=f(m(t,x))=0$, we have 
\begin{equation*}
    m_t(t,x)-m_{xx}(t,x)-f(m(t,x))=0.
\end{equation*} 
For $x\ge x_0(t+t^1)+t$ and $t>0$, by the definitions of $m$ and $w$ and Hypothesis \ref{fu}, we have
\begin{equation*}
    m_t(t,x)-f(m(t,x))\ge  r \frac{w(t+t^1,x-t)}{(1-\ln w(t+t^1,x-t))^\alpha}-w_x(t+t^1,x-t)-r \frac{w(t+t^1,x-t)}{(1-\ln w(t+t^1,x-t))^\alpha}=-w_x(t+t^1,x-t).
\end{equation*}
Therefore, for all $x\ge x_0(t+t^1)+t$ and $t>0$, since $x-t\ge x_0(t+t^1)>x_0(t)$ and $t+t^1>t^1$ for $t>0$, by Lemma \ref{lem22},we have 
\begin{equation*}
        m_t(t,x)-m_{xx}(t,x)-f(m(t,x))
        \ge -w_x(t+t^1,x-t) -w_{xx}(t+t^1,x-t)\ge 0.
\end{equation*}
\par
When $t=0$, since $u_0\le 1$ and $w(\cdot,x)$ is nondecreasing for each $x\in\mathbb{R}$, we have that $m(0,x)=1\ge u_0(x)=u(0,x)$ for  $x<x_0(t^1)$, and that $m(0,x)= w(t^1,x)\ge w(0,x)= u_0(x)=u(0,x)$ for  $x\ge x_0(t^1)$.
The comparison principle then yields that
\begin{equation}\label{lmm}
    u(t,x)\le m(t,x) \quad \text{for all }t>0 \text{ and }x\in\mathbb{R}.
\end{equation}
\begin{proof}[The proof of the upper bound]
   Now, we prove that, for any $\lambda\in(0,1)$ and large time $t$, there are a constant $\bar C_\lambda>0$ we have 
   \begin{equation}\label{lmub}
       E_\lambda(t)\subset u_0^{-1}\Big\{[\bar C_\lambda e^{-(r(\alpha+1)t)^\frac{1}{\alpha+1}},1]\Big\}.
   \end{equation}
  It follows from \eqref{lmm} that 
  \begin{equation*}
     u(t,x)\le m(t,x) \le w( t+t^1,x-t)\quad \text{for all }t>0 \text{ and }x\in\mathbb{R}.
  \end{equation*}
  If we pick $y\in E_\lambda(t)$, then $w(t+t^1,y-t)\ge\lambda$.
  Thus, by the definitions of $w$ and $\varphi_0$, we have 
  \begin{equation*}
        w(t+t^1,y-t)= \exp \Big\{1-[(1-\ln u_0(y-t))^{\alpha+1}-\rho(\alpha+1)(t+t^1)]^{\frac{1}{\alpha+1}} \Big\}\ge\lambda,
    \end{equation*}
    whence 
  \begin{equation*}
      u_0(y-t)\ge \exp\Big\{1-\left(r(\alpha+1)(t+t^1)+(1-\ln \lambda)^{\alpha+1}\right)^{\frac{1}{\alpha+1}}\Big\}.
  \end{equation*}
  
By the assumption $\varphi_0'(x)=o(\varphi_0^{-\alpha}(x))$ as $x\to +\infty$, we have, for some $C>0$,
\begin{equation*}
    \frac{U_0(Ce^{-(r(\alpha+1)t)^\frac{1}{\alpha+1}})}{t}\to +\infty \quad \text{as } t\to \infty,
\end{equation*}
where $U_0(z):=\sup \{x\in\mathbb{R},u_0(x)=z\}$.
Therefore, there is a time $\bar t_\lambda$ and a constant $\bar C_\lambda$ such that
\begin{equation}\label{lamee}
    E_\lambda(t)\subset u_0^{-1}\Big\{[\bar C_\lambda e^{-(r(\alpha+1)t)^\frac{1}{\alpha+1}},1]\Big\} \quad \forall t\ge \bar t_\lambda,
\end{equation}
proving \eqref{lmub}.

\end{proof}

\subsection{The lower bound}
Assuming additionally that $\varphi_0''(x)\le0$ for large $x$, we derive a more precise lower bound. To do so, we take $\rho=r$ and recall 
\begin{equation}\label{lvxt1}
	v(t,x)=\left\{
 \begin{aligned}
 &\frac{1}{4M},&x\le x_M(t),\\
 &g(w(t,x)),& x>x_M(t),
 \end{aligned}\right.
\end{equation}
where $ M\ge M_0=\max\left\{\frac{1}{2\zeta},\frac{1}{4s_0}\right\}$.
We claim that 
\begin{equation}\label{lbo}
	E_\lambda(t)\subset u_0^{-1}\left\{\left(0,\  \underline Ce^{-[r(\alpha+1)t]^{\frac{1}{\alpha+1}}}\right]\right\}\quad \text{for $t$ large enough}.
\end{equation}
In view of \eqref{wxx}, since $\varphi_0''(x)\le 0$ for large $x$, we obtain $w_{xx}\ge 0$ for any $x\ge x_M(t)$ with large enough $M$, say $M>M^*\ge M_0$.
For $x>x_M(t)$, it follows from the assumption $\varphi_0'(x)=o(\varphi_0^{-\alpha}(x))$ as $x\to +\infty$, the definition \eqref{lvxt1} of $v$ and \eqref{wx} that we have for $M>M^*$, up to enlarge $M^*$,
\begin{equation*}
     \begin{aligned}
        v_{xx}(t,x)&=(1-2M w)w_{xx}-2 M w_x^2\\
         &\ge -2 M \left(\varphi_0'(1+\varphi_0)^{\alpha}\right)^2\frac{w^2}{(1-\ln w)^{2\alpha}}\\
         &\ge -\frac{1}{4} rM\frac{w^2}{(1-\ln w)^{2\alpha}}.
     \end{aligned}
\end{equation*}
On the other hand, for $x\le x_M(t)$, we have $v_{xx}(t,x)=0.$
\par
In view of \eqref{llfu1} and \eqref{llfu2}, similar to \eqref{laamtmxxfm1}, we obtain, for $x\le x_M(t)$, 
\begin{equation*}
    (v_t-v_{xx}-f(v))(t,x)
    \le 0,
\end{equation*}
whereas, for $x>x_M(t)$, we have
\begin{equation*}
    (v_t-v_{xx}-f(v))(t,x)\le\frac{w(t,x)}{(1-\ln w(t,x))^\alpha}\Big((-\frac{3}{4}rM+rK)w(t,x)
    +\alpha \tilde c M\frac{w(t,x)}{1-\ln w(t,x)}+\alpha \tilde c KM^2\frac{w^{3}}{1-\ln w(t,x)}\Big).
\end{equation*}
For $0<w\le \frac{1}{2M}$, when $M\ge  M^{**}:=\max\left\{\frac{1}{2}\exp\left(4\alpha \tilde c(1+\frac{1}{r})-1\right),\frac{K}{4}\right\}$ , we have
\begin{equation*}
    \frac{r+KMw^2}{1-\ln w}\le \frac{r+1}{1+\ln(2M)}\le \frac{r}{4\alpha \tilde c}.
\end{equation*}
Thus, for any $x>x_M(t)$, we take $M$ large enough, say
\begin{equation*}
    M\ge \bar M:=\max\Big\{2rK,M^*, M^{**}\Big\},
\end{equation*}
so that 
\begin{equation*}
    r\alpha \tilde c  M\frac{w(t,x)}{1-\ln w(t,x)}+\alpha \tilde c K M^2 \frac{w^{3}(t,x)}{1-\ln w(t,x)}=\alpha \tilde c  M\frac{r+KMw^2(t,x)}{1-\ln w(t,x)} w(t,x)\le \frac{1}{4}rMw (t,x),
\end{equation*}
whence
\begin{equation*}
    (v_t-v_{xx}-f(v))(t,x)
    \le\frac{w^{2}(t,x)}{(1-\ln w(t,x))^\alpha}\left(-\frac{1}{2}M+rK\right)\le 0.
\end{equation*}

\par In view of \eqref{lvxt1}, we notice that:
\begin{itemize}
    \item when $x>x_M(0)$, we have $ v(0,x)=g(w(0,x))\le w(0,x)=u_0(x)$;
    \item when $\xi_1\le x\le x_M(0)$, since $u_0$ is nonincreasing on $[\xi_1,+\infty)$, we have $v(0,x)=\frac{1}{4M}<\frac{1}{2M}\le u_0(x)$;
    \item when $x< \xi_1$, since $M>\frac{1}{2\zeta}$, we have $v(0,x)=\frac{1}{4M}\le\frac{1}{2M}<\zeta\le u_0(x)$.
\end{itemize}
 Thus, we obtain $v(0,x)\le u_0(x)=u(0,x)$ for all $x\in\mathbb{R}$.
Therefore, the comparison principle implies that 
\begin{equation*}
    v(t,x)\le u(t,x) \quad \text{for all }x\in\mathbb{R},\ t>0.
\end{equation*}
\par
It follows from the proof of the lower bound of Lemma \ref{gen} that, for $\lambda\in (0,\Theta)$, if $x\in E_\lambda(t)$, then there exists a time $\underline t'_\lambda>t'_\theta$ such that 
\begin{equation}\label{lbe1}
	u_0(x)\le u_0(y_\theta(t))=\exp\left\{1-\left[r(\alpha+1)t+(1-\ln \theta)^{\alpha+1}\right]^{\frac{1}{\alpha+1}}\right\}\le \underline C_1 e^{-[r(\alpha+1)t]^{\frac{1}{\alpha+1}}} \quad \text{for any }t>\underline t'_\lambda,
\end{equation}
and, for $\lambda\in (\Theta,1)$, there exists a time $\underline t_\lambda>\max\{t'_\lambda, \underline t''_\lambda\}$ such that 
\begin{equation}\label{lbe2}
	u_0(x)<u_0(y_\theta(t-\underline t''_\lambda))=\exp\left\{1-\left[r(\alpha+1)(t-\underline t''_\lambda)+(1-\ln \lambda)^{\alpha+1}\right]^{\frac{1}{\alpha+1}}\right\}< \underline C_2e^{-[r(\alpha+1)t]^{\frac{1}{\alpha+1}}}
\end{equation}
for any $t>\underline t_\lambda$ which gives \eqref{lbo}.
Let $T_\lambda=\max\{\underline t_\lambda,\bar t_\lambda\}$ and $\underline C_\lambda=\max\{\underline C_1,\underline C_2\}$. Thus, combining \eqref{lamee}  \eqref{lbe1} and \eqref{lbe2}, the proof of Lemma \ref{gen1} is complete.

\section{Numerical simulations}\label{s5}
In this section, we provide some numerical simulations to illustrate the previous results. 
\par
To get an approximate solution for equation \eqref{oeq1}, we discretize the equation in space by the finite difference method and then use \textit{Implicit-Explicit} scheme (IMEX) \cite{ascher1997implicit,ascher1995implicit} to integrate it in time,
 where the implicit scheme handles the diffusion term while the explicit handles the reaction term. 
The influence of the initial data $u_0$ on the propagation speed is illustrated under the some fixed $\alpha$ in Figure \ref{fig:exp0.2}-\ref{fig:alge0.6}.
We mainly consider the initial data with two kinds of decay: sub-exponential decay and algebraic decay. In the following simulations, we all take $f(u)=\frac{u}{(1-\ln u)^\alpha}(1-u)$ for $u\in(0,1)$.
For the initial data with sub-exponential decay, we take the initial data to be $u_0=\min\{e^{-5x^\beta},1\}$ and $\alpha=0.2,\ 0.4,\ 0.6$. 
Figure \ref{fig:exp0.2}, Figure \ref{fig:exp0.4} and Figure \ref{fig:exp0.6} show that the acceleration can be observed over a small time range when $\beta$ is small. 
This is consistent with our theoretical results, that is, $x_\lambda(t)\asymp t^{\frac{1}{\beta(1+\alpha)}}$ tends to infinite as $\beta\to 0^+$. 
When $\beta$ is large enough, as we show in Theorem \ref{lin}, for initial data $u_0\lesssim e^{-\mu x^\beta}$ with $\beta>\frac{1}{\alpha+1}$ and $\mu>0$, the solution propagates at a finite rate.
Notice that the width of the solution becomes larger and larger as $\beta$ gets smaller and smaller. This is because the flattening effect \cite{bouin2021sharp,garnier2015transition}.

\begin{figure} 
\centering 
    \begin{subfigure}[b]{0.48\textwidth} 
        \includegraphics[width=\textwidth]{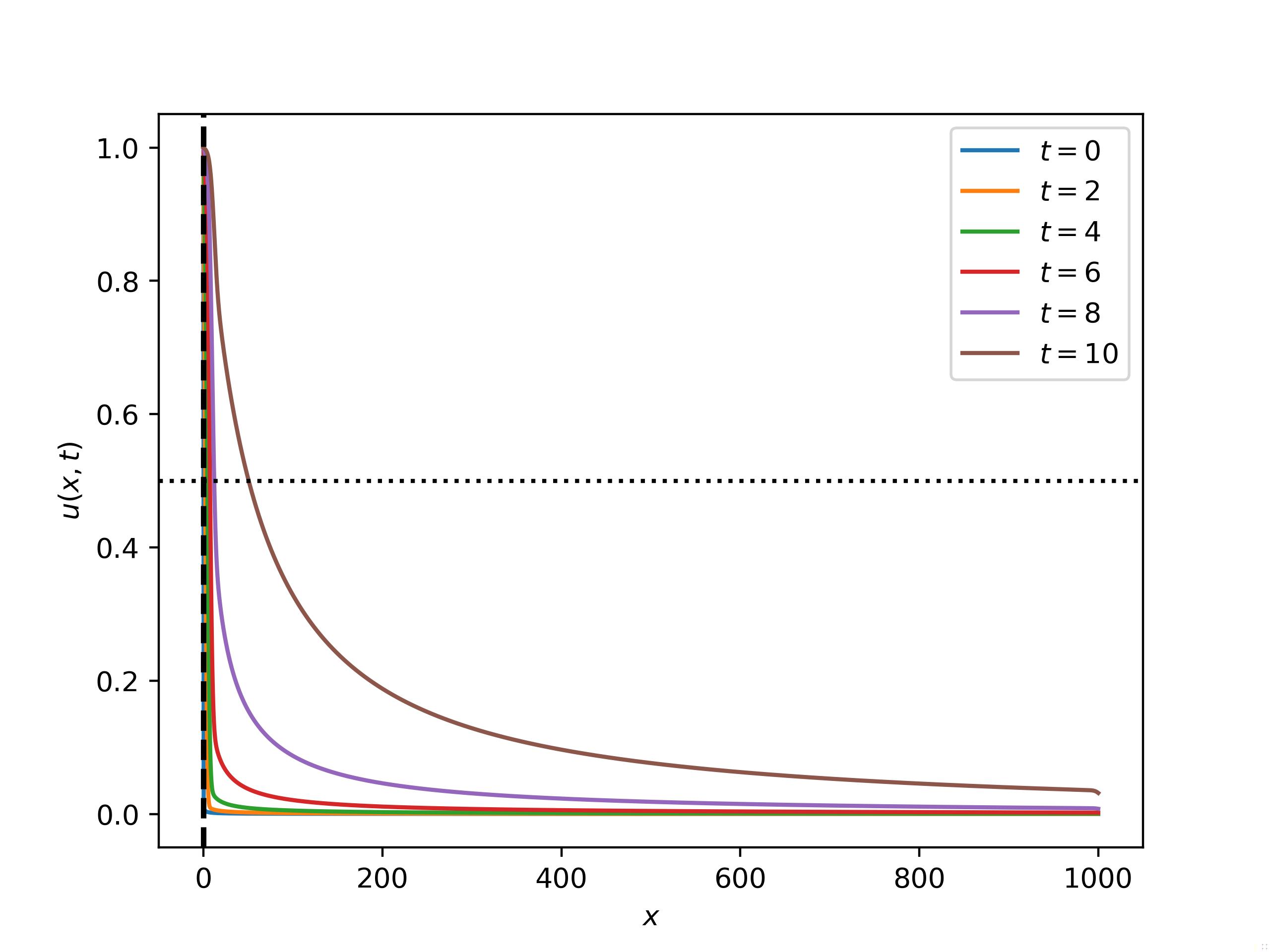} \caption{$\beta=0.1$} 
        \label{fig:exp0.2beta0.2} 
    \end{subfigure} \quad
    \begin{subfigure}[b]{0.48\textwidth}  
        \includegraphics[width=\textwidth]{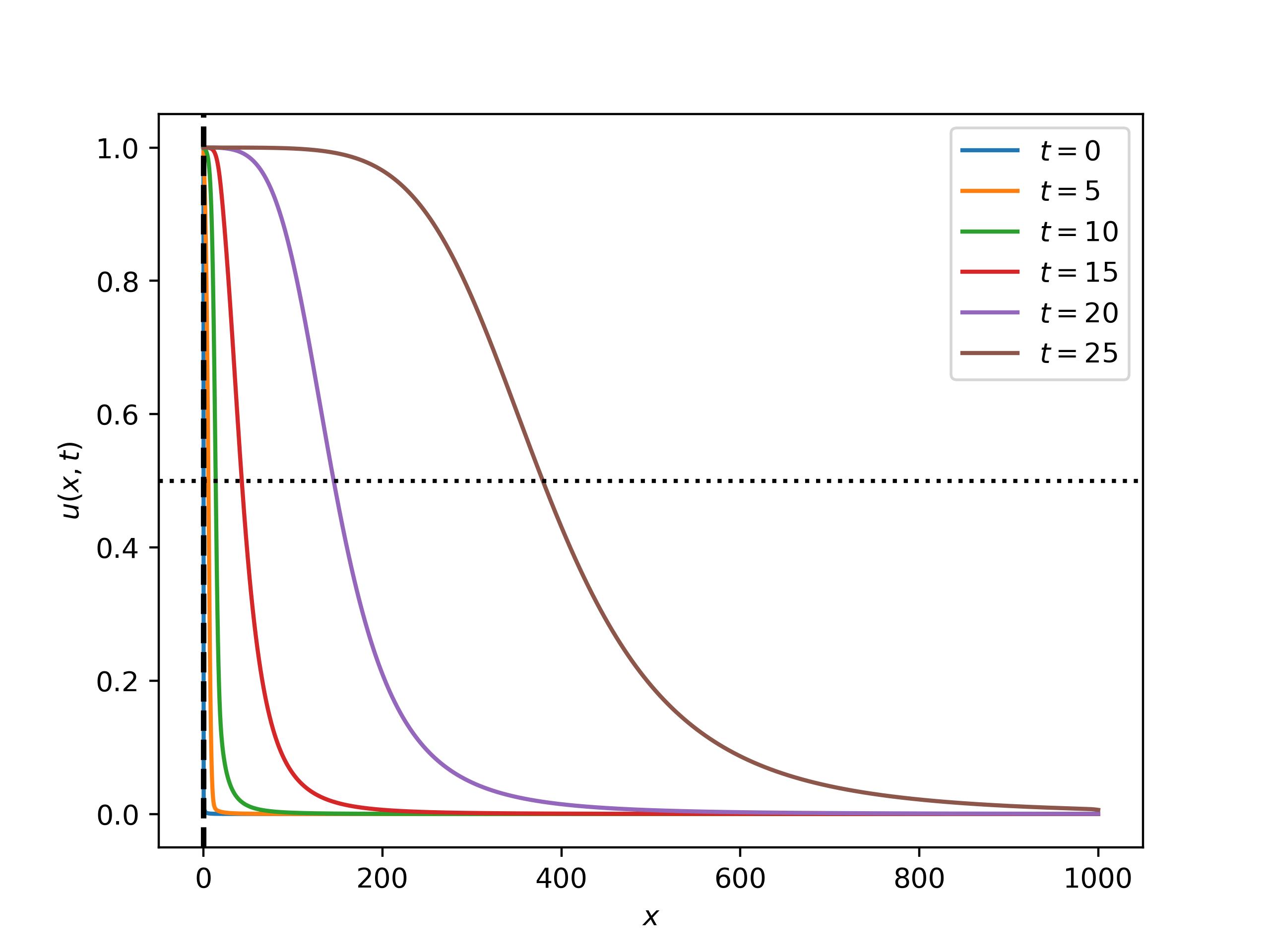} 
        \caption{$\beta=0.2$} 
    \end{subfigure} 
    
    \begin{subfigure}[b]{0.48\textwidth}  
        \includegraphics[width=\textwidth]{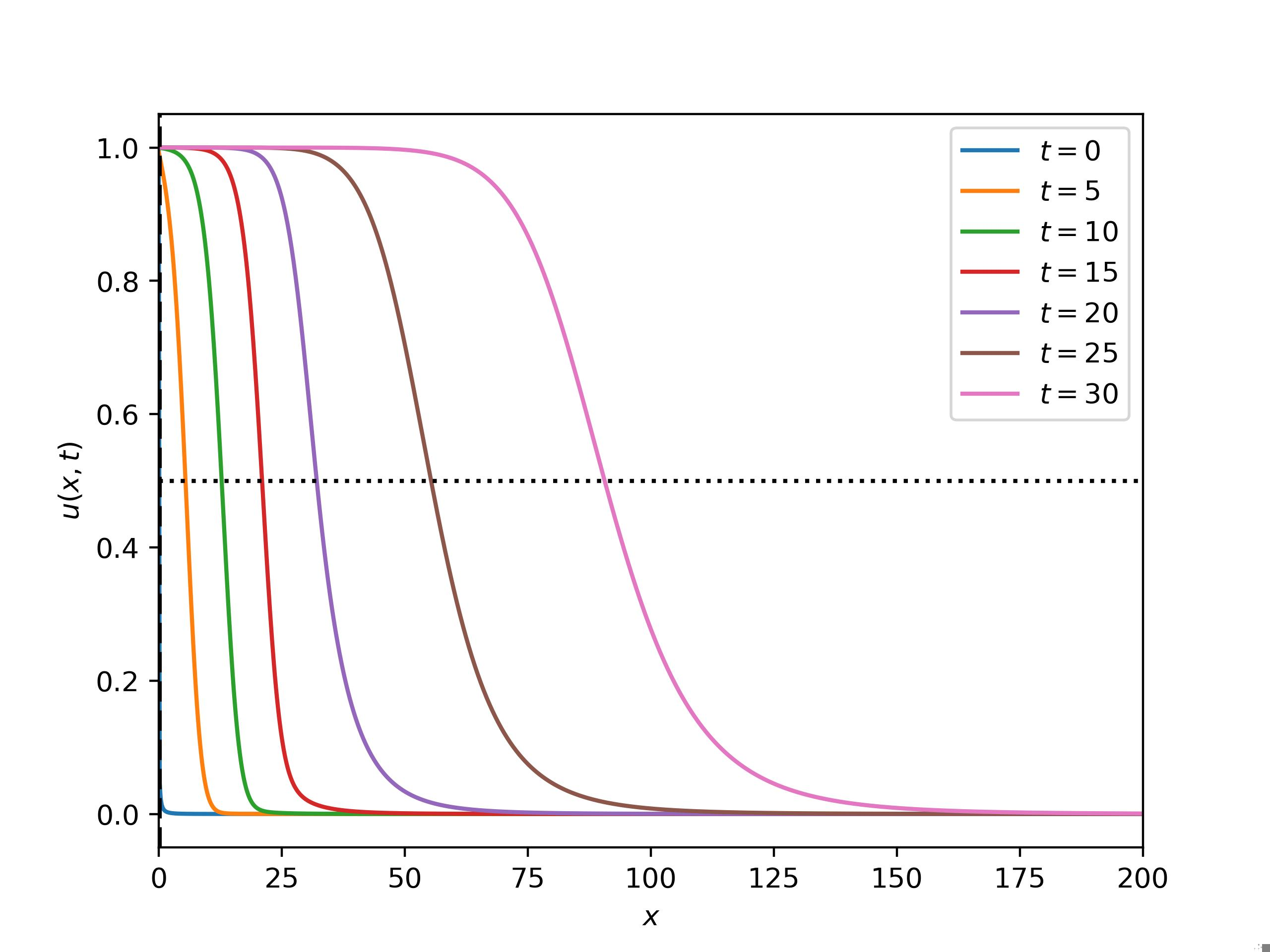} \caption{$\beta=0.3$} 
    \end{subfigure}
    \quad
    \begin{subfigure}[b]{0.48\textwidth}  
        \includegraphics[width=\textwidth]{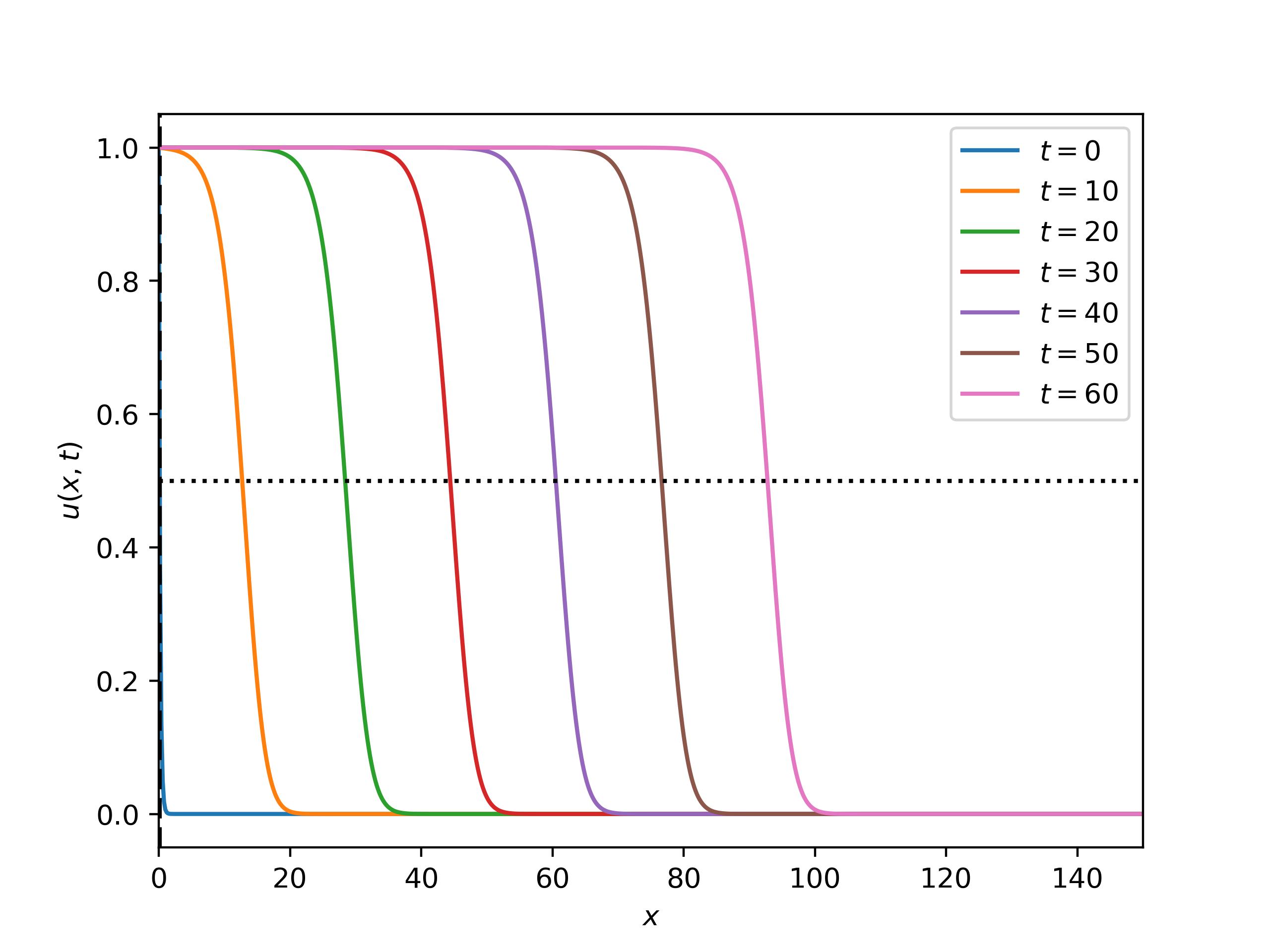} \caption{$\beta=1.0$} 
    \end{subfigure}
       
\caption{Numerical approximations of the solution to \eqref{oeq1} with the initial data $u_0(x)=\min\{e^{-5x^\beta},1\}$ at different times for $\alpha=0.2$ and different values of $\beta$. The threshold for acceleration is $\beta=\frac{1}{\alpha+1}=\frac{5}{6}$.}
\label{fig:exp0.2}
\end{figure} 

\begin{figure} 
\centering 
    \begin{subfigure}[b]{0.48\textwidth} 
        \includegraphics[width=\textwidth]{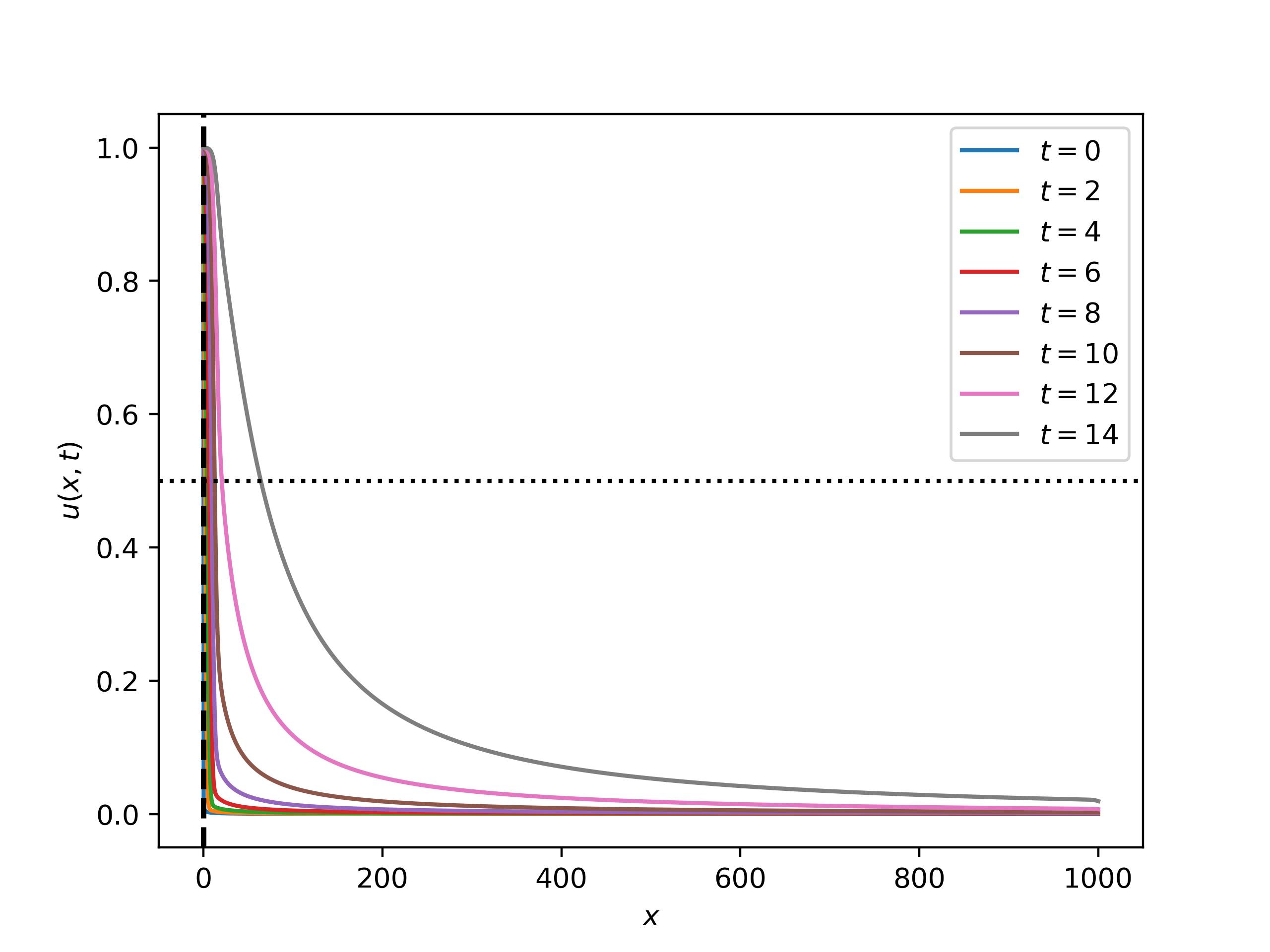} \caption{$\beta=0.1$} 
    \end{subfigure} 
    \quad
    \begin{subfigure}[b]{0.48\textwidth}  
        \includegraphics[width=\textwidth]{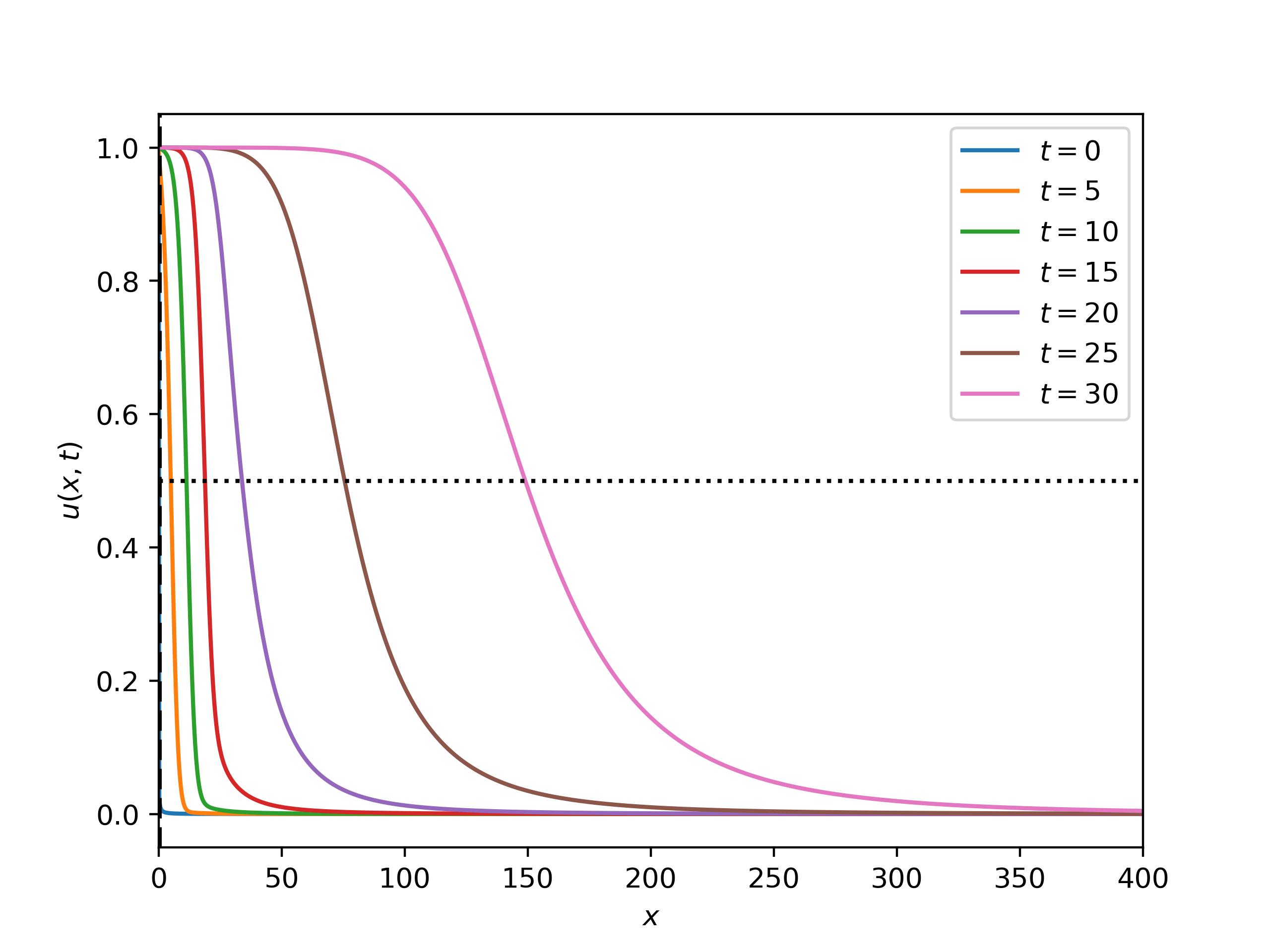} 
        \caption{$\beta=0.2$} 
    \end{subfigure} 
    
    \begin{subfigure}[b]{0.48\textwidth}  
        \includegraphics[width=\textwidth]{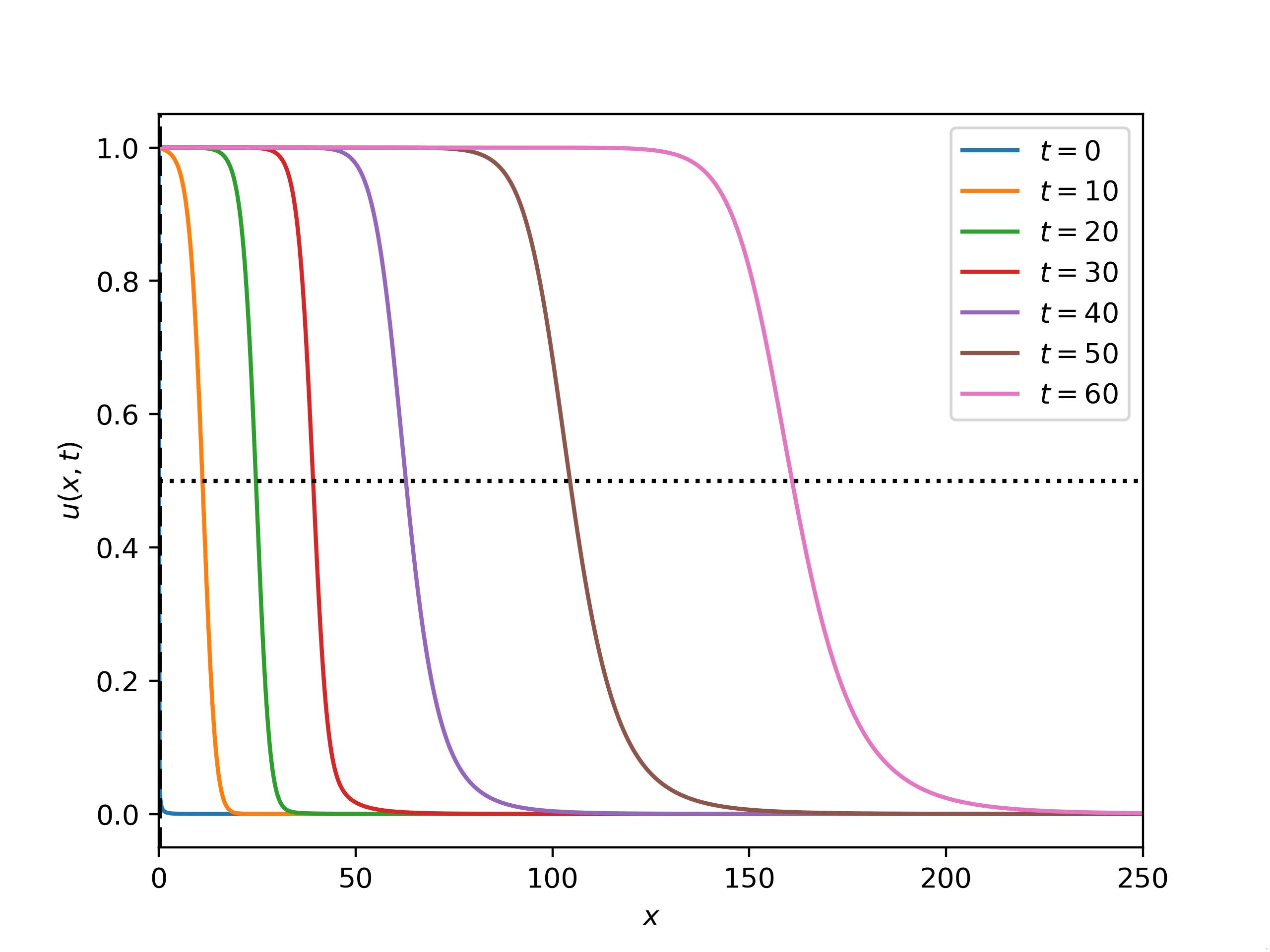} \caption{$\beta=0.3$} 
    \end{subfigure} 
    \quad
    \begin{subfigure}[b]{0.48\textwidth}  
        \includegraphics[width=\textwidth]{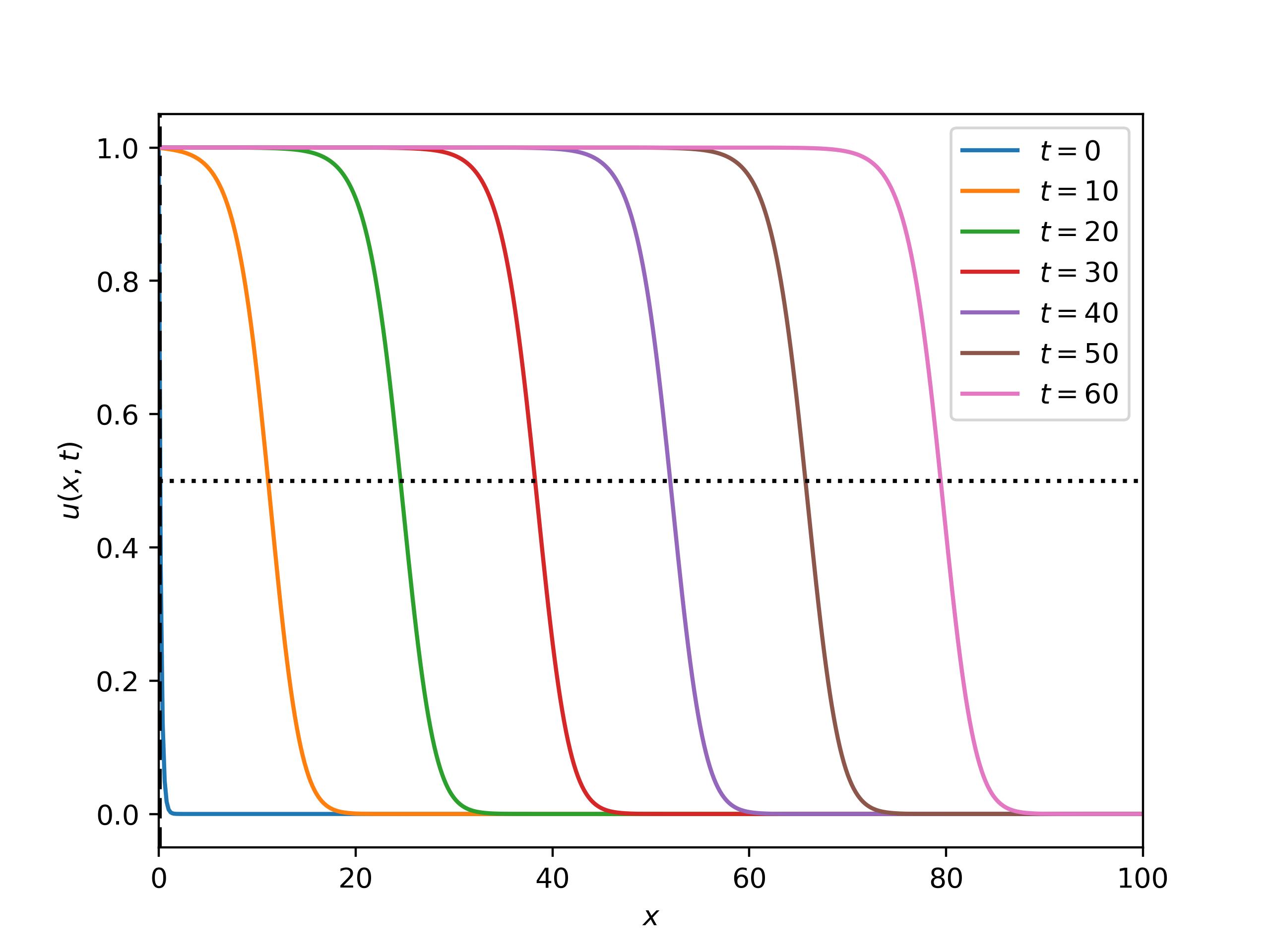} \caption{$\beta=1.0$} 
    \end{subfigure} 
   
\caption{Numerical approximations of the solution to \eqref{oeq1} with the initial data $u_0(x)=\min\{e^{-5x^\beta},1\}$ at different times for $\alpha=0.4$ and different values of $\beta$. The threshold for acceleration is $\beta=\frac{1}{\alpha+1}=\frac{5}{7}$.}
\label{fig:exp0.4}
\end{figure} 
\begin{figure} 
\centering 
    \begin{subfigure}[b]{0.48\textwidth} 
        \includegraphics[width=\textwidth]{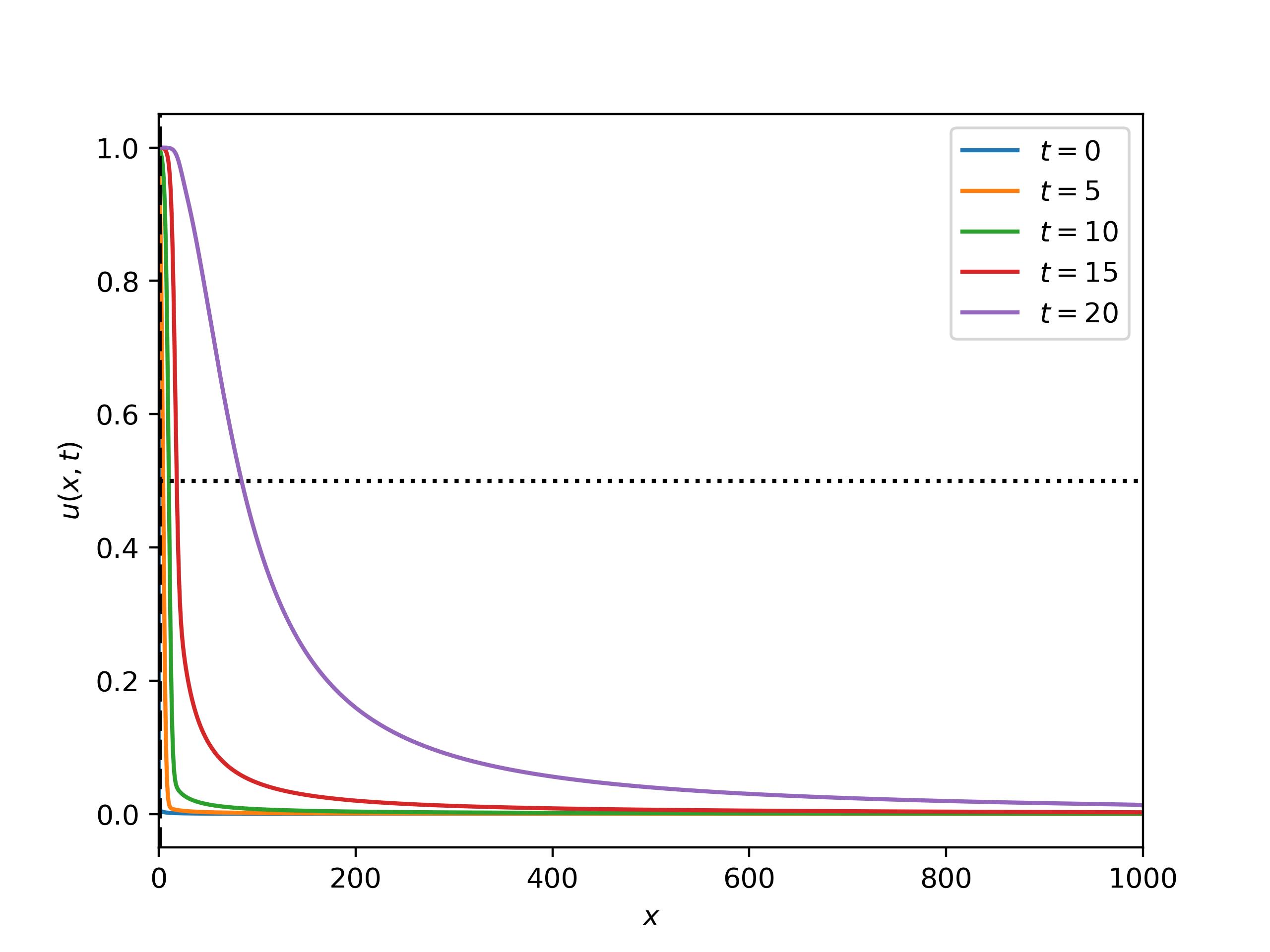} \caption{$\beta=0.1$} 
    \end{subfigure} 
    \quad
    \begin{subfigure}[b]{0.48\textwidth}  
        \includegraphics[width=\textwidth]{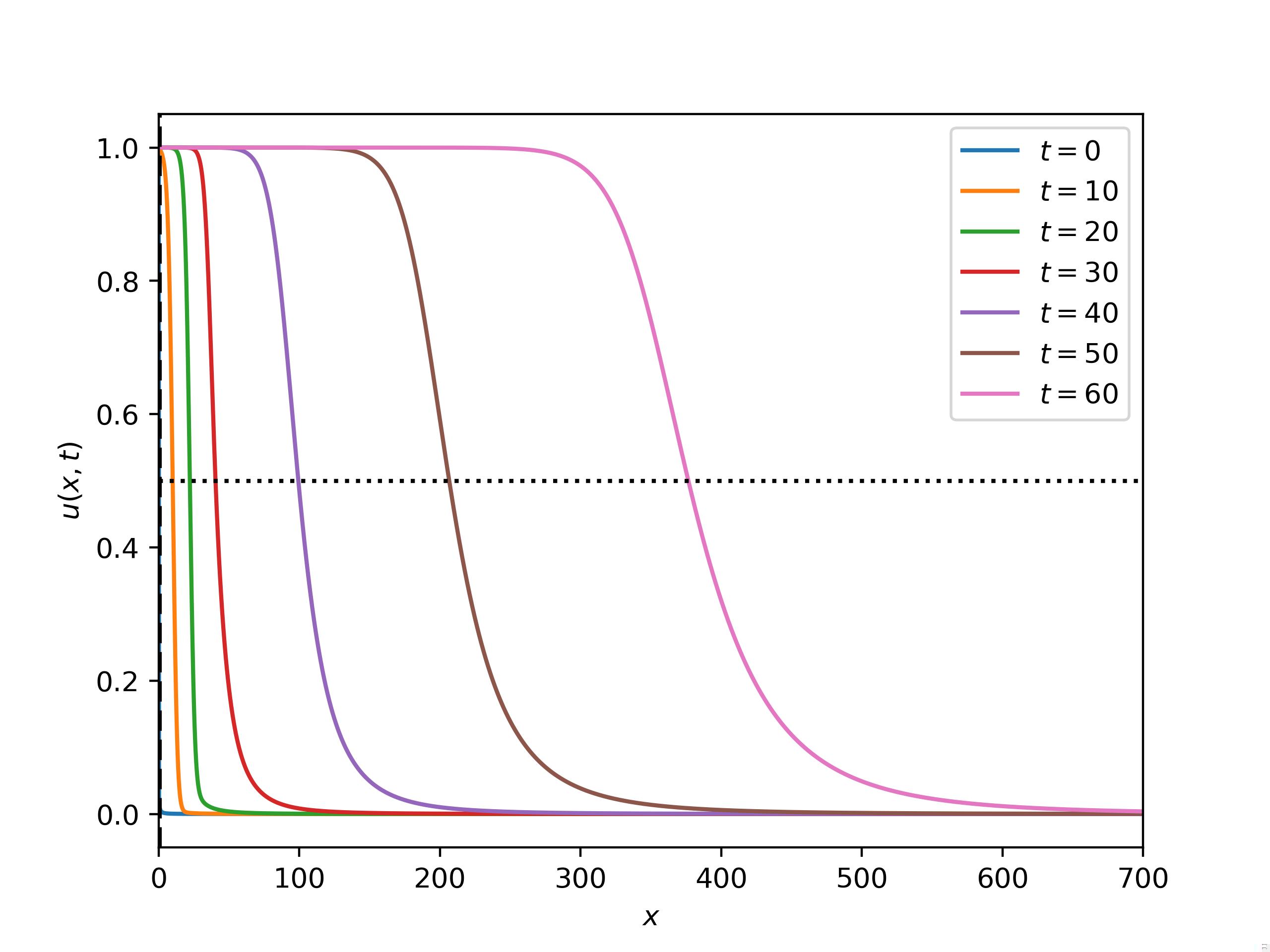}
        \caption{$\beta=0.2$} 
    \end{subfigure} 
    
    \begin{subfigure}[b]{0.48\textwidth}  
        \includegraphics[width=\textwidth]{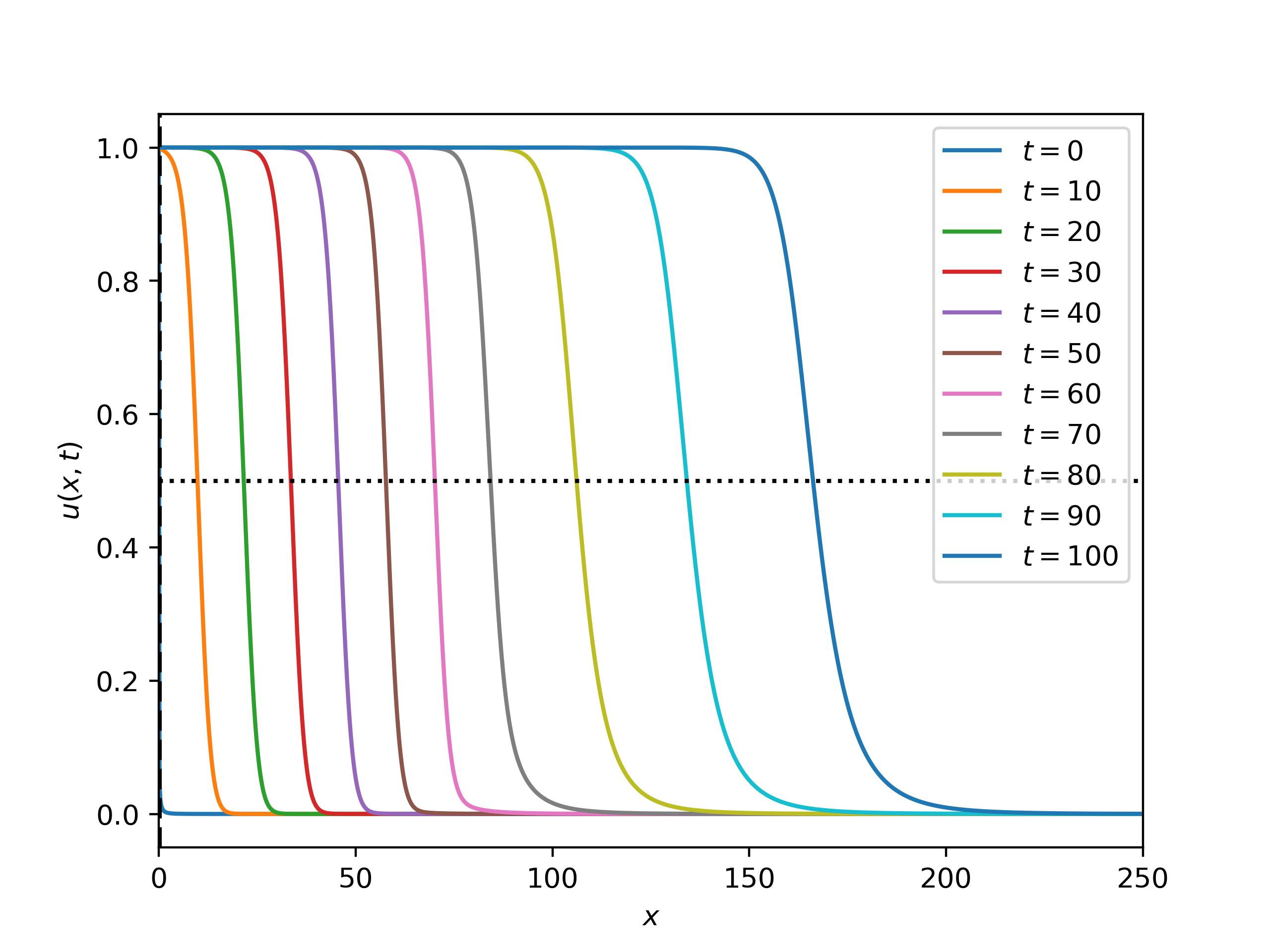} \caption{$\beta=0.3$} 
    \end{subfigure} 
     \quad
    \begin{subfigure}[b]{0.48\textwidth}  
        \includegraphics[width=\textwidth]{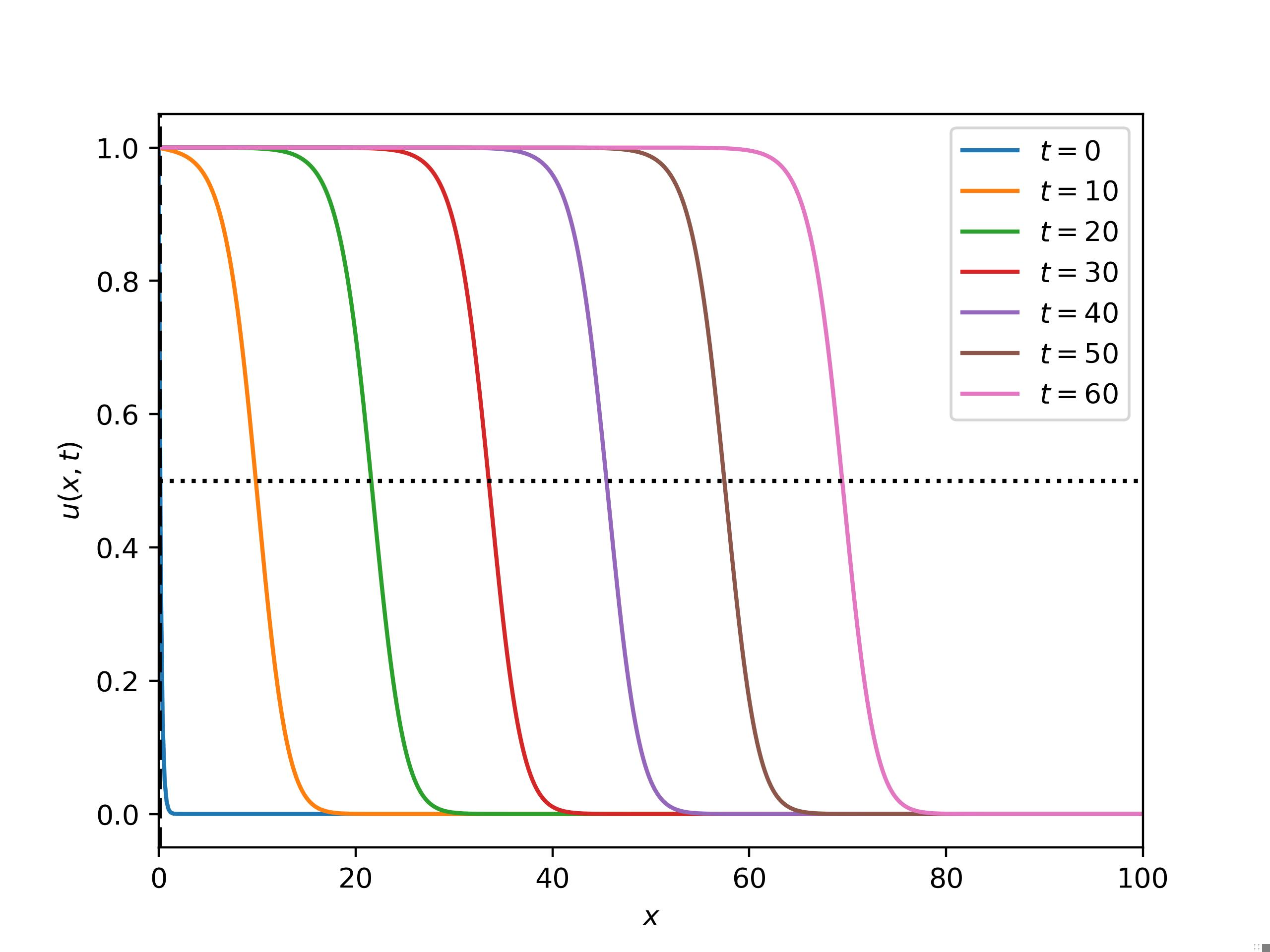} \caption{$\beta=1.0$} 
    \end{subfigure} 
\caption{Numerical approximations of the solution to \eqref{oeq1} with the initial data $u_0(x)=\min\{e^{-5x^\beta},1\}$ at different times for $\alpha=0.6$ and different values of $\beta$. The threshold for acceleration is $\beta=\frac{1}{\alpha+1}=\frac{5}{8}$. }
\label{fig:exp0.6}
\end{figure} 

\par
For initial data with algebraic decay, we take $u_0=\min\{\frac{1}{1+100x^\beta},1\}$ and $\alpha=0.2,\ 0.4,\ 0.6$. In Figure \ref{fig:alge0.2}, Figure \ref{fig:alge0.4}, and Figure \ref{fig:alge0.6}, we observe that decreasing the parameter $\beta$ leads to an increase of the propagation speed. Our theoretical findings support this observation, as demonstrated by the fact that $x_\lambda(t)\asymp \exp{\tfrac{(r(\alpha+1)t)^{\tfrac{1}{\alpha+1}}}{\beta}}$ tends to infinity as $\beta\to 0^+$. We can also observe the flattening effect. Therefore, the decay of the initial data is the key to the propagation of solution to equation \eqref{oeq1}.  When the initial data increases, meaning $\beta$ decreases, the propagation speed also increases. 
\par In Figure \ref{fig:com_level_sets}, we provide a comparison between the largest element $x_\lambda(t)$ of level sets $E_\lambda(t)$ of the solution with three different types of initial data. Observe that the slope of curve for the algebraic decay case is maximum, followed by the sub-exponential decay, and the sub-exponentially bounded case show a straight line. This is consistent with our theoretical results. 
We fit the corresponding theoretical results for each cases, as shown in the thick continuous curves in the figure \ref{fig:com_level_sets}. Notice that in each pair of curves when time $t$ is large enough, our experimental results are consistent with the theoretical results. The curves we choose with theoretical rates are $x_\lambda(t)=1.9t-4.0$, $x_\lambda(t)=0.0013t^{\frac{1}{0.28}}+40.0$ and $x_\lambda(t)= 0.0236e^{(1.4t)^{\frac{1}{1.4}}}+10$ respectively. Here, in order to better observe the trend of each pair of curves, we make a small downward translation for the thick continuous curves.
\begin{figure} 
\centering 
    \begin{subfigure}[b]{0.31\textwidth} 
        \includegraphics[width=\textwidth]{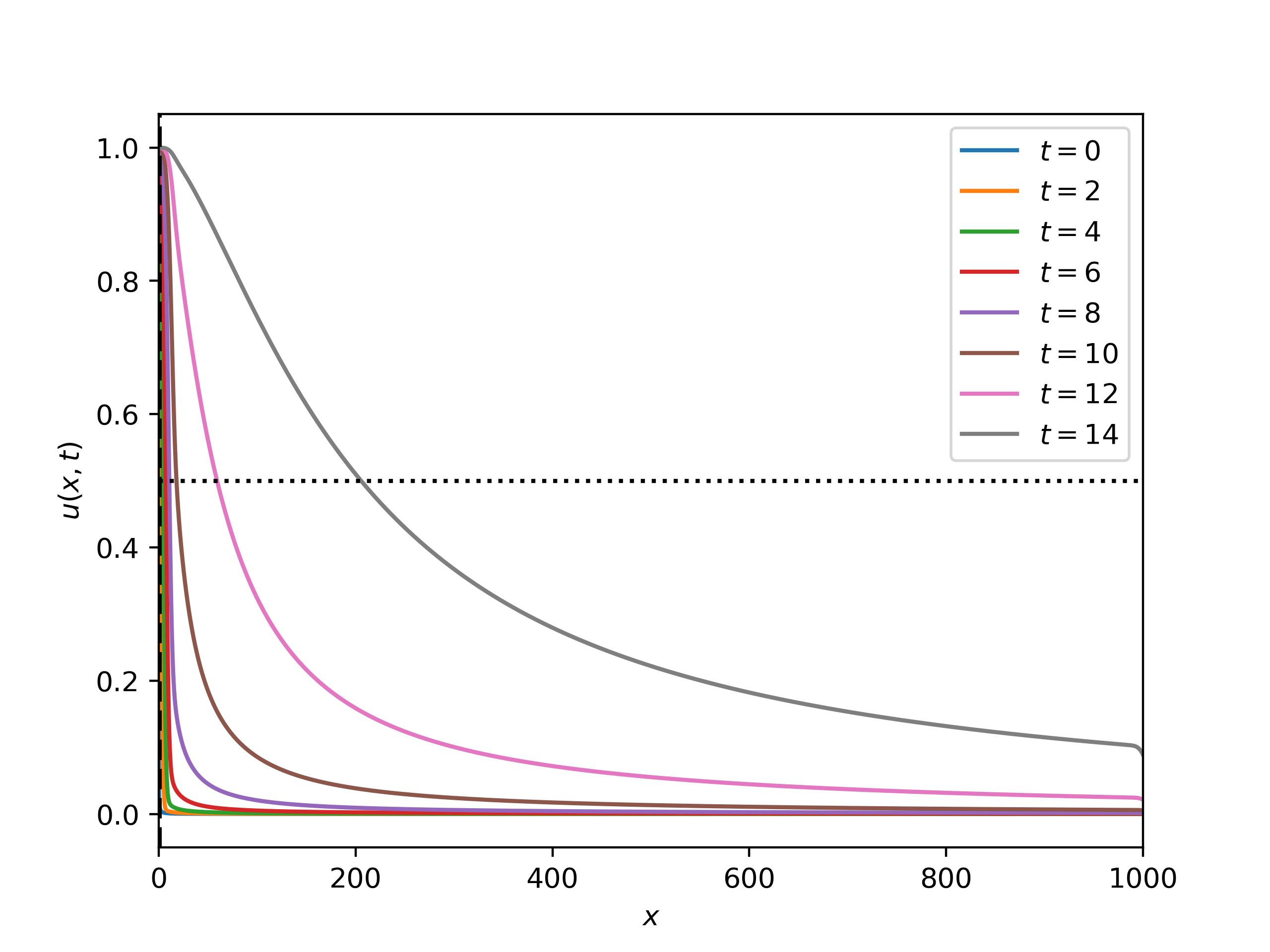} \caption{$\beta=1.0$} 
    \end{subfigure} 
    \quad
    \begin{subfigure}[b]{0.31\textwidth}  
        \includegraphics[width=\textwidth]{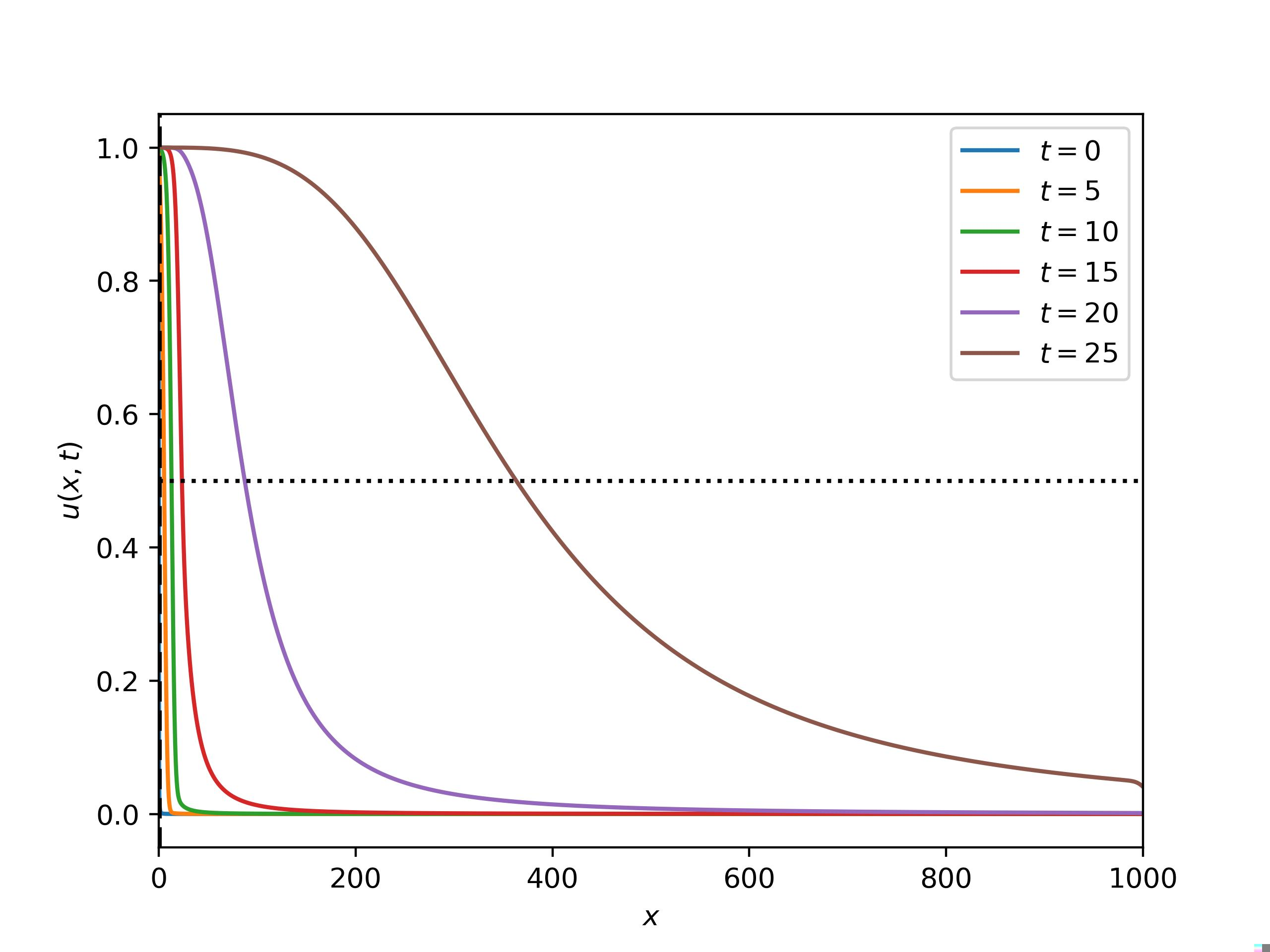} 
        \caption{$\beta=2.0$} 
    \end{subfigure} 
    \quad 
    \begin{subfigure}[b]{0.31\textwidth}  
        \includegraphics[width=\textwidth]{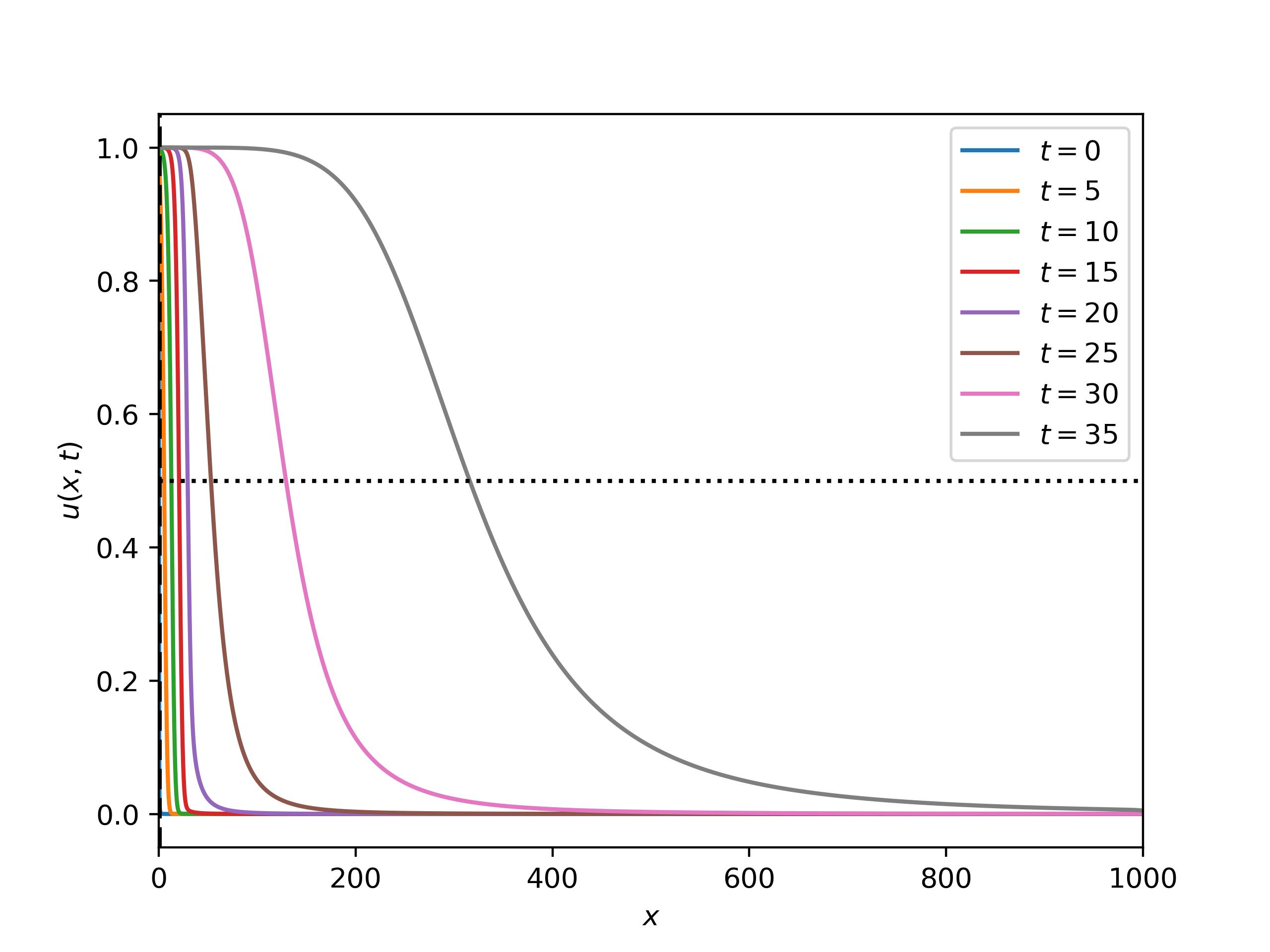} \caption{$\beta=3.0$} 
    \end{subfigure} 
\caption{Numerical approximations of the solution to \eqref{oeq1} with the initial data $u_0(x)=\min\{\frac{1}{1+100x^\beta},1\}$ at different times for $\alpha=0.2$ and different values of $\beta$.}
\label{fig:alge0.2}
\end{figure}
\begin{figure} 
\centering 
    \begin{subfigure}[b]{0.31\textwidth} 
        \includegraphics[width=\textwidth]{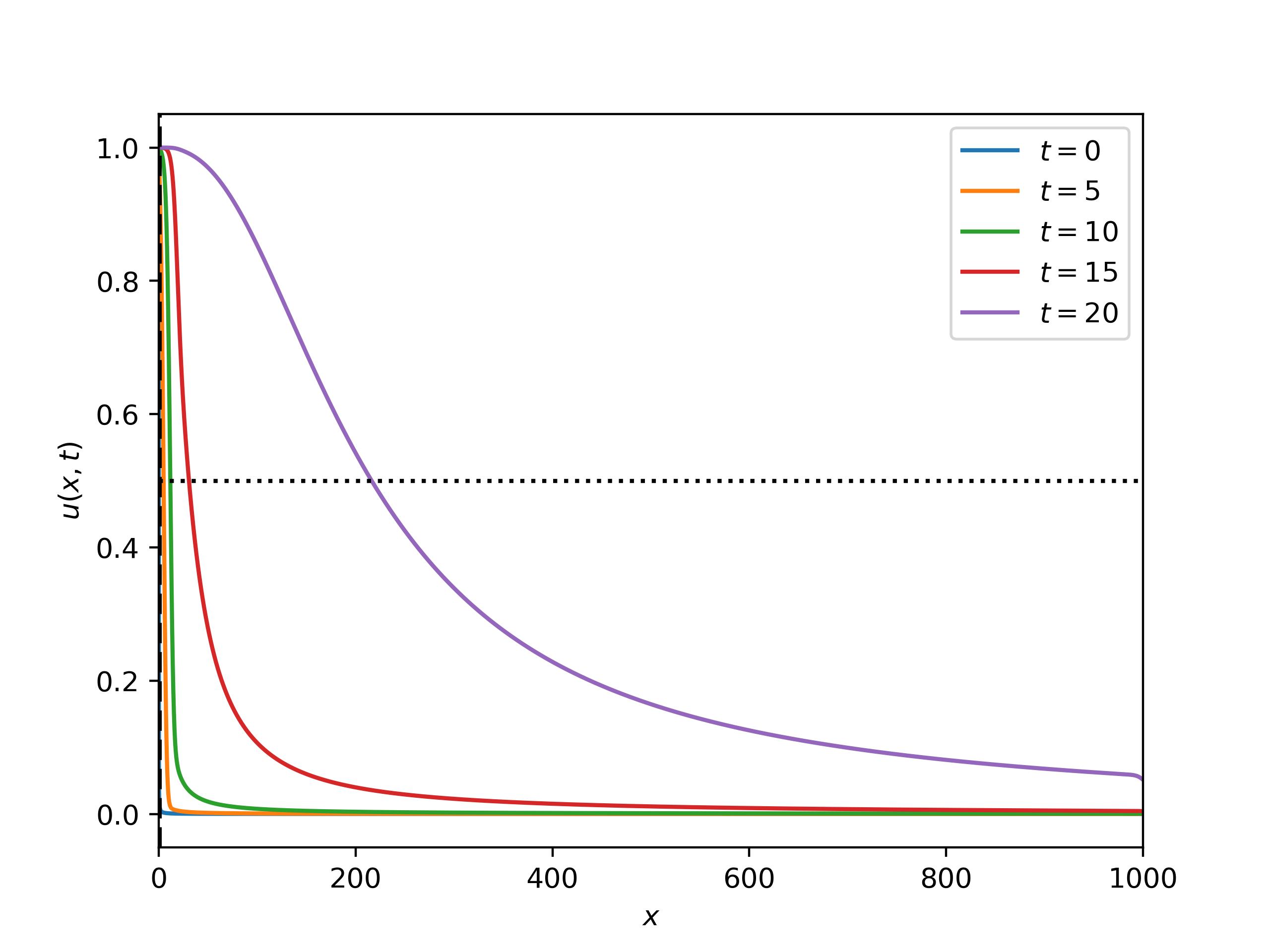} \caption{$\beta=1.0$} 
    \end{subfigure} 
    \quad
    \begin{subfigure}[b]{0.31\textwidth}  
        \includegraphics[width=\textwidth]{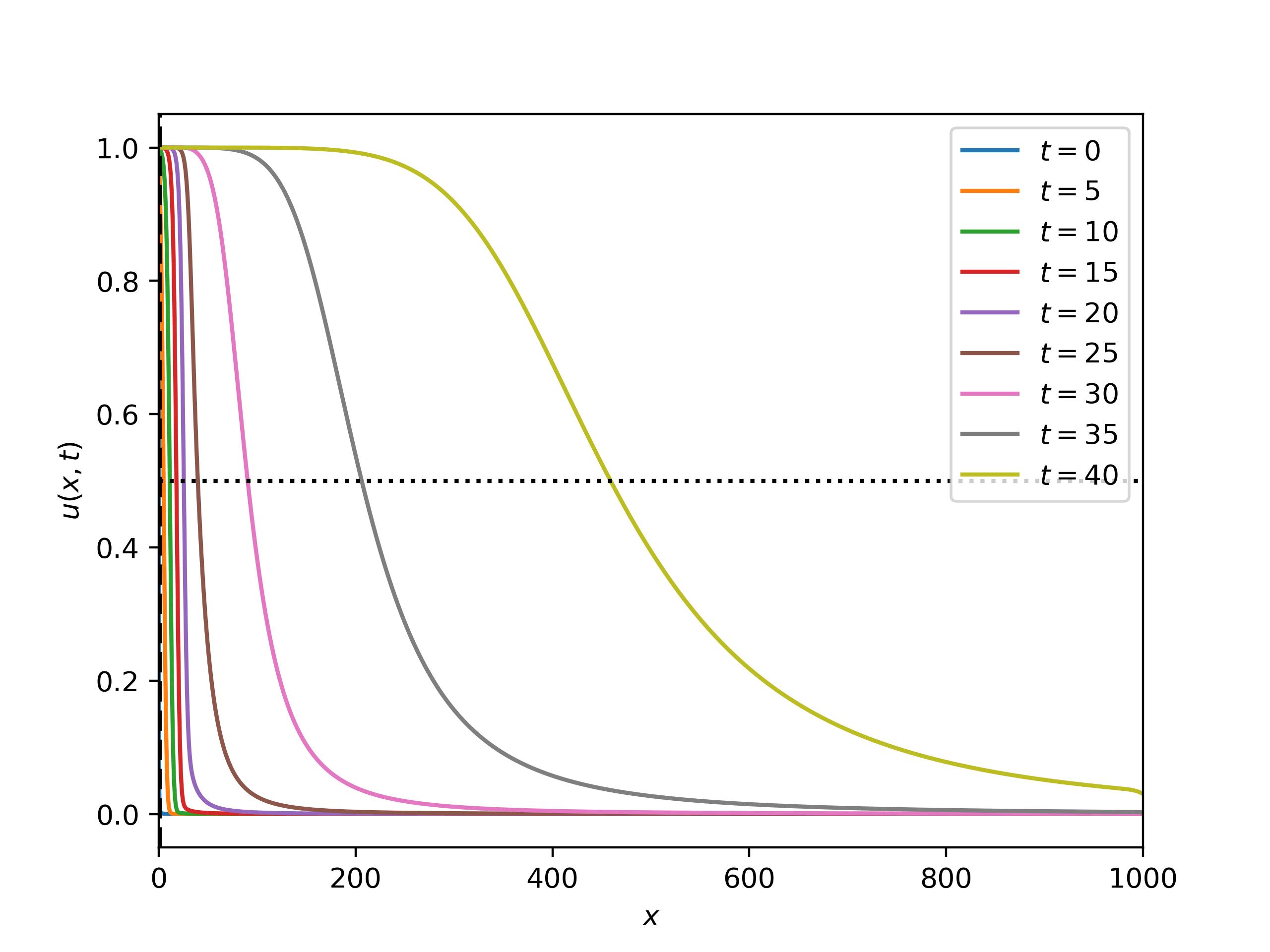} 
        \caption{$\beta=2.0$} 
    \end{subfigure} 
    \quad
    \begin{subfigure}[b]{0.31\textwidth}  
        \includegraphics[width=\textwidth]{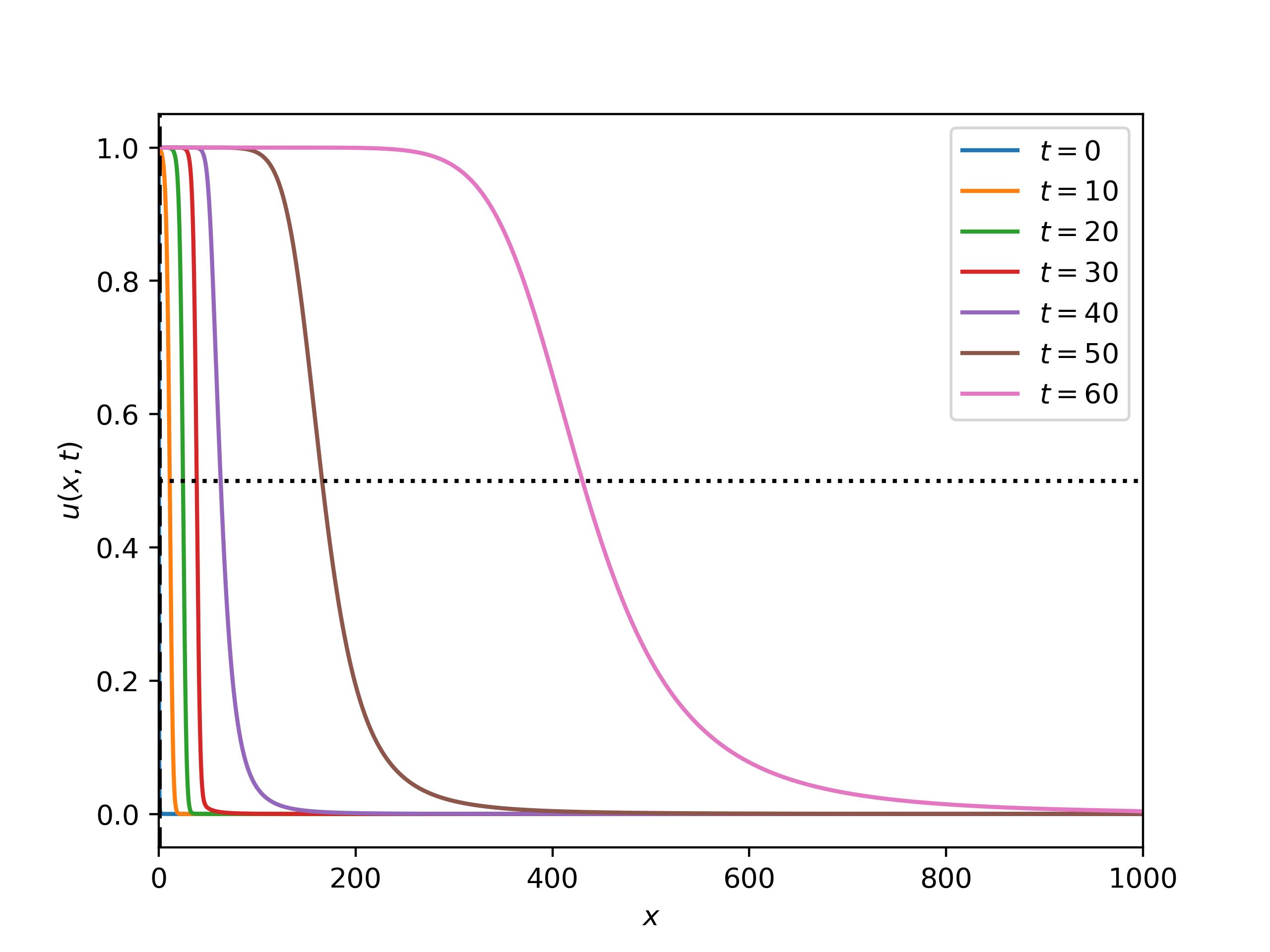} \caption{$\beta=3.0$} 
    \end{subfigure} 
\caption{Numerical approximations of the solution to \eqref{oeq1} with the initial data $u_0(x)=\min\{\frac{1}{1+100x^\beta},1\}$ at different times for $\alpha=0.4$ and different values of $\beta$.}
\label{fig:alge0.4}
\end{figure}
\begin{figure} 
\centering 
    \begin{subfigure}[b]{0.31\textwidth} 
        \includegraphics[width=\textwidth]{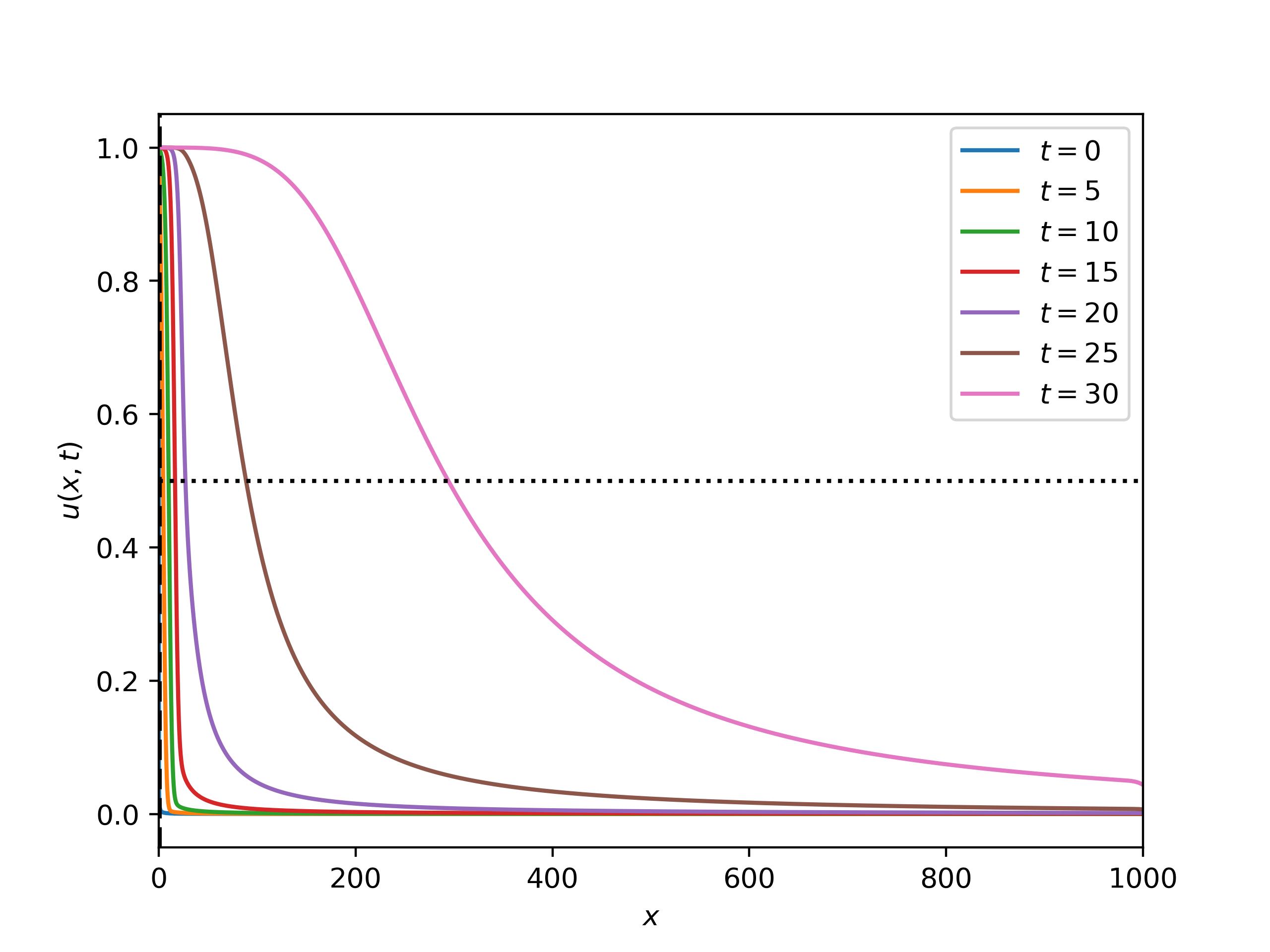} \caption{$\beta=1.0$} 
    \end{subfigure} 
    \quad 
    \begin{subfigure}[b]{0.31\textwidth}  
        \includegraphics[width=\textwidth]{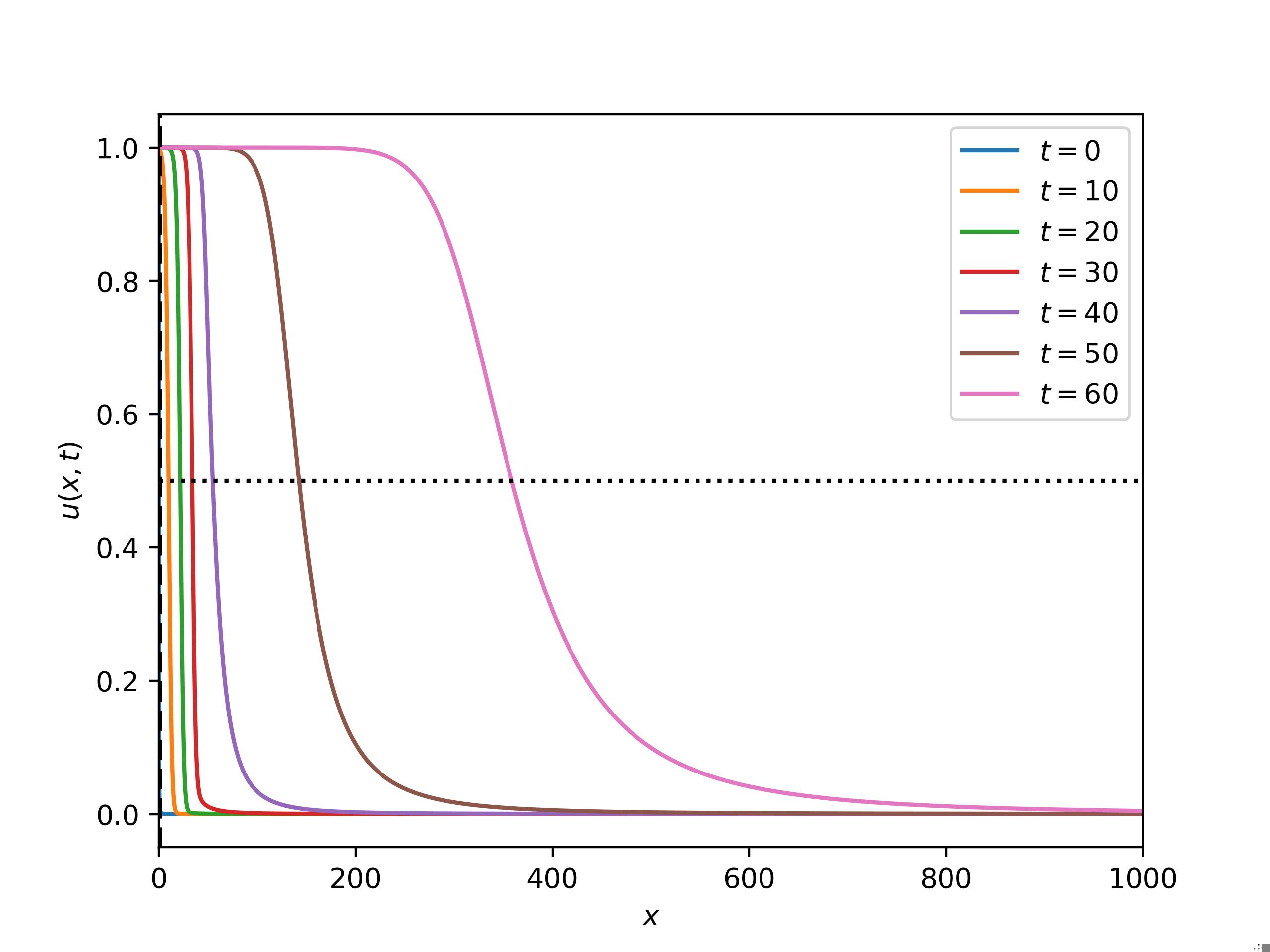} 
        \caption{$\beta=2.0$} 
    \end{subfigure} 
    \quad
    \begin{subfigure}[b]{0.31\textwidth}  
        \includegraphics[width=\textwidth]{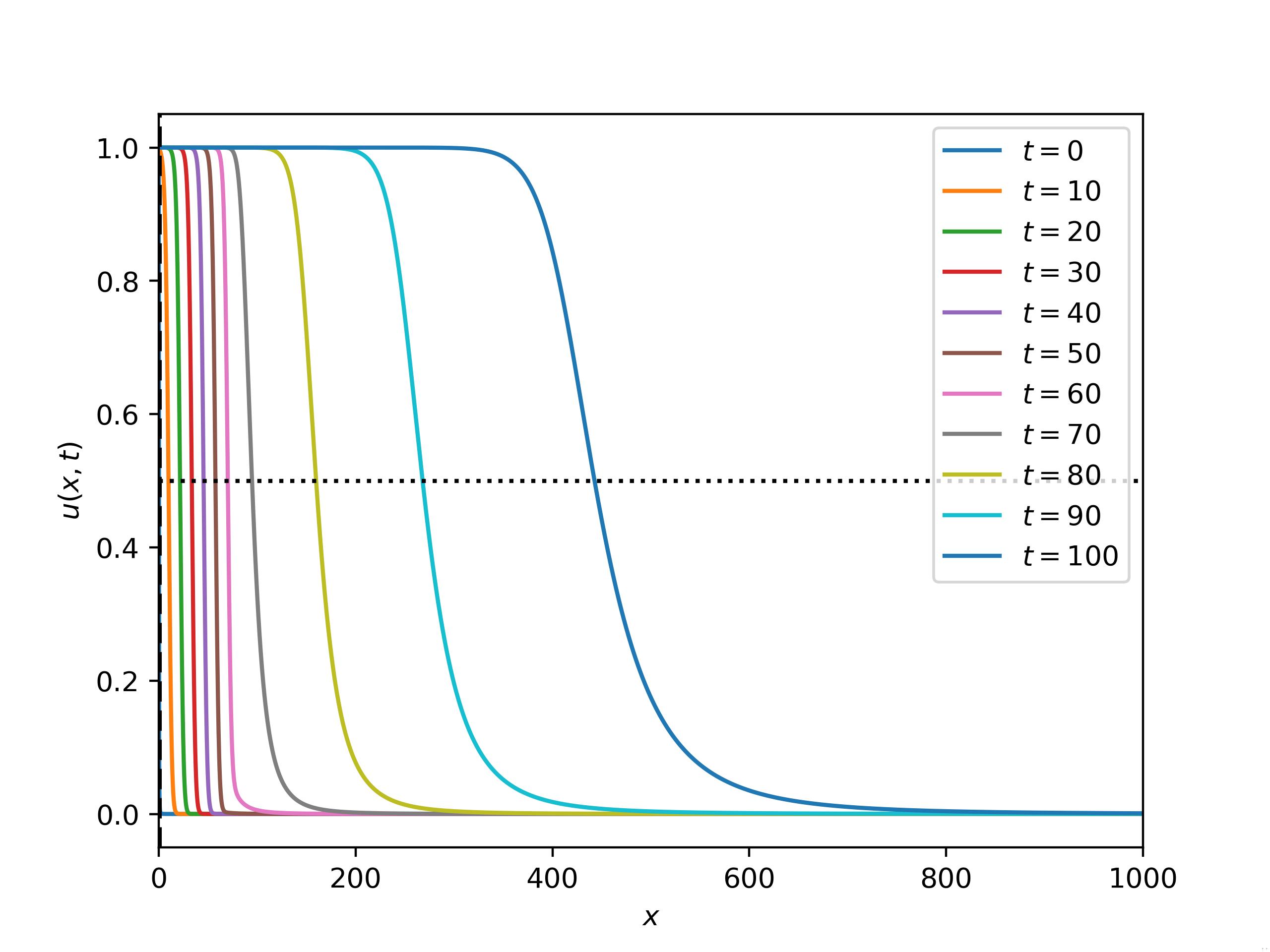} \caption{$\beta=3.0$} 
    \end{subfigure} 
\caption{Numerical approximations of the solution to \eqref{oeq1} with the initial data $u_0(x)=\min\{\frac{1}{1+100x^\beta},1\}$ at different times for $\alpha=0.6$ and different values of $\beta$.}
\label{fig:alge0.6}
\end{figure}

\begin{figure}
    \centering
    \includegraphics[width=0.65\linewidth]{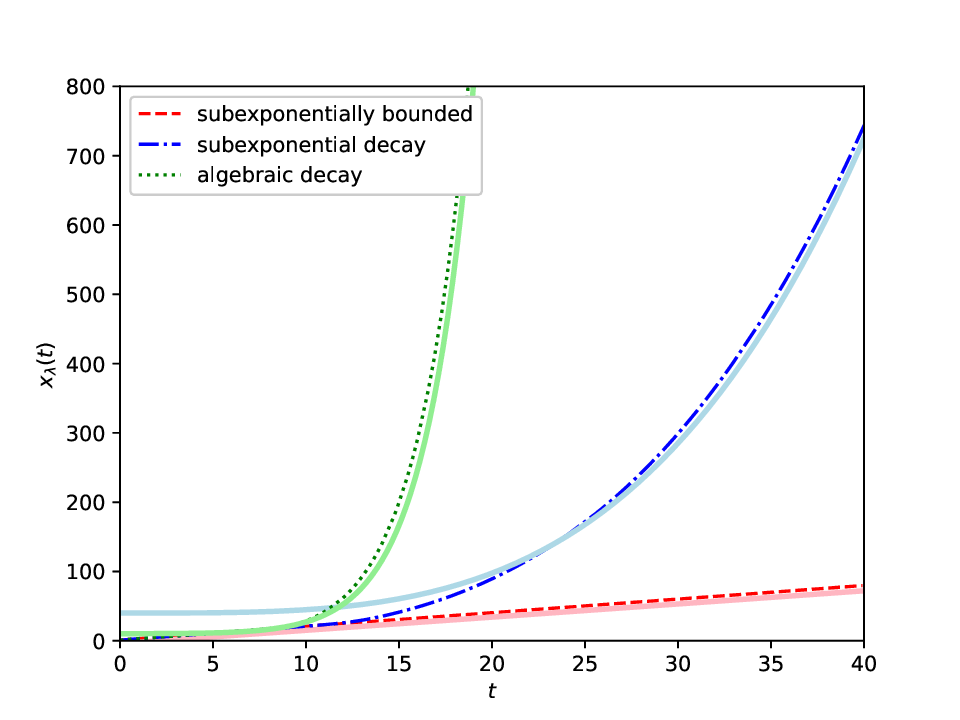}
    \caption{Comparison between the largest element $x_{\lambda}(t)$ of level sets $E_{\lambda}(t)$ of the solution starting from three types of initial data: sub-exponentially bounded $u_0(x)=\min\{e^{-5x},1\}$, sub-exponential decay $u_0(x)=\min\{e^{-5x^{0.2}},1\}$ and algebraic decay $u_0(x)=\min\{\frac{1}{1+100x},1\}$. In this figure, the thick continuous curves are theoretical results. Here, we choose $\alpha=0.4$ and $\lambda=\frac{1}{2}$.}
    \label{fig:com_level_sets}
\end{figure}

  \newpage
	\bibliographystyle{plain}
		\bibliography{ref}	
	\end{document}